\pdfoutput=1
\documentclass[letterpaper, oneside, reqno]{amsart}
\usepackage[margin=1in]{geometry}

\usepackage{amsmath,amssymb,mathtools}
\usepackage[foot]{amsaddr}

\usepackage{placeins}

\usepackage{amssymb} 
\usepackage{stmaryrd}
\usepackage{graphicx}
\usepackage[colorlinks=true, pdfstartview=FitV, linkcolor=blue, citecolor=Green, urlcolor=WildStrawberry, linktoc=page]{hyperref} 

\usepackage{array}
\usepackage{ragged2e}

\usepackage{bm}

\usepackage{dsfont}
\usepackage[T1]{fontenc}
\usepackage[utf8]{inputenc}

\DeclareFontFamily{OT1}{rsfs}{}
\DeclareFontShape{OT1}{rsfs}{n}{it}{<-> rsfs10}{}
\DeclareMathAlphabet{\mathscr}{OT1}{rsfs}{n}{it}
\usepackage{mathrsfs}
\usepackage{MnSymbol}
\usepackage{setspace}
\usepackage[nopatch=footnote]{microtype}
\usepackage{enumitem}
\usepackage[dvipsnames]{xcolor}

\usepackage[normalem]{ulem}

\usepackage{booktabs} 

\usepackage{comment}

\usepackage{braket}

\usepackage{etoolbox}
\newtoggle{focs}
\toggletrue{focs}
\newcommand{\iffocs}[2]{\iftoggle{focs}{#1}{#2}}

\definecolor{darkgreen}{rgb}{0,0.5,0}
\definecolor{darkblue}{rgb}{0,0,0.7}
\definecolor{darkred}{rgb}{0.9,0.1,0.1}

\newlength{\bibitemsep}
\let\oldthebibliography\thebibliography
\renewcommand\thebibliography[1]{%
  \oldthebibliography{#1}%
  \setlength{\parskip}{\bibitemsep}%
  \setlength{\itemsep}{-7pt}%
}

\newtheoremstyle{break}%
{}{}%
{\itshape}{}%
{\bfseries}{.\vphantom{$p_{p_{p_p}}$}}%
{\newline}
{\thmname{#1}\thmnumber{ #2}\thmnote{\ \,\textmd{(#3)}}}

\theoremstyle{break}

\newtheorem{proposition}{Proposition}
\newtheorem{theorem}[proposition]{Theorem}
\newtheorem{lemma}[proposition]{Lemma}
\newtheorem{corollary}[proposition]{Corollary}

\theoremstyle{remark}
\newtheorem{remark}[proposition]{Remark}

\theoremstyle{definition}
\newtheorem{definition}[proposition]{Definition}

\newtheorem*{lemma*}{Lemma}

\newcommand{\vocab}[1]{\emph{#1}}

\numberwithin{equation}{section}
\numberwithin{proposition}{section}
\numberwithin{figure}{section}
\numberwithin{table}{section}

\newcommand{\Z}{\mathbb{Z}}

\newcommand{\R}{\mathbb{R}}

\newcommand{\calN}{\mathcal N}

\renewcommand{\P}{\mathop{{}\mathbb{P}}}

\newcommand{\Cov}{\mathop{{}\boldsymbol{\mathrm{Cov}}}}
\newcommand{\E}{\mathop{{}\mathbb{E}}}

\renewcommand{\le}{\leqslant}
\renewcommand{\ge}{\geqslant}
\renewcommand{\leq}{\leqslant}
\renewcommand{\geq}{\geqslant}

\renewcommand{\subset}{\subseteq}
\renewcommand{\bar}{\overline}

\newcommand{\td}{\widetilde}

\renewcommand{\hat}{\widehat}

\newcommand{\1}{\mathbf{1}}

\newcommand{\eps}{\varepsilon}

\newenvironment{e*}{\begin{equation*}}{\end{equation*}\ignorespacesafterend}
\newcommand{\norm}[1]{\left\lVert{#1}\right\rVert}

\newcommand{\an}[1]{\left\langle#1\right\rangle}

\newcommand{\ot}{\otimes}

\DeclareMathOperator{\sech}{sech}

\newcommand{\Id}[0]{I}

\renewcommand{\norm}[1]{\left\lVert{#1}\right\rVert}

\newcommand{\Var}{\mathop{{}\boldsymbol{\mathrm{Var}}}}

\renewcommand{\le}{\leqslant}
\renewcommand{\leq}{\leqslant}
\renewcommand{\ge}{\geqslant}
\renewcommand{\geq}{\geqslant}

\usepackage{prettyref}
\newcommand{\savehyperref}[2]{\texorpdfstring{\hyperref[#1]{#2}}{#2}}

\protected\def\verythinspace{%
  \ifmmode
    \mskip0.5\thinmuskip
  \else
    \ifhmode
      \kern0.083em
    \fi
  \fi
}
\newrefformat{eq}{\savehyperref{#1}{\textup{(\ref*{#1})}}}
\newrefformat{e}{\savehyperref{#1}{\textup{(\ref*{#1})}}}
\newrefformat{ineq}{\savehyperref{#1}{\textup{(\ref*{#1})}}}
\newrefformat{eqn}{\savehyperref{#1}{\textup{(\ref*{#1})}}}
\newrefformat{l}{\savehyperref{#1}{Lemma~\ref*{#1}}}
\newrefformat{lem}{\savehyperref{#1}{Lemma~\ref*{#1}}}
\newrefformat{def}{\savehyperref{#1}{Definition~\ref*{#1}}}
\newrefformat{d}{\savehyperref{#1}{Definition~\ref*{#1}}}
\newrefformat{t}{\savehyperref{#1}{Theorem~\ref*{#1}}}
\newrefformat{thm}{\savehyperref{#1}{Theorem~\ref*{#1}}}
\newrefformat{cor}{\savehyperref{#1}{Corollary~\ref*{#1}}}
\newrefformat{c}{\savehyperref{#1}{Corollary~\ref*{#1}}}
\newrefformat{cha}{\savehyperref{#1}{Chapter~\ref*{#1}}}
\newrefformat{sec}{\savehyperref{#1}{\S\verythinspace\ref*{#1}}}
\newrefformat{s}{\savehyperref{#1}{\S\verythinspace\ref*{#1}}}
\newrefformat{subsec}{\savehyperref{#1}{\S\verythinspace\ref*{#1}}}
\newrefformat{app}{\savehyperref{#1}{\S\verythinspace\ref*{#1}}}
\newrefformat{tab}{\savehyperref{#1}{Table~\ref*{#1}}}
\newrefformat{fig}{\savehyperref{#1}{Figure~\ref*{#1}}}
\newrefformat{hyp}{\savehyperref{#1}{Hypothesis~\ref*{#1}}}
\newrefformat{alg}{\savehyperref{#1}{Algorithm~\ref*{#1}}}
\newrefformat{a}{\savehyperref{#1}{Algorithm~\ref*{#1}}}
\newrefformat{rem}{\savehyperref{#1}{Remark~\ref*{#1}}}
\newrefformat{item}{\savehyperref{#1}{Item~\ref*{#1}}}
\newrefformat{step}{\savehyperref{#1}{step~\ref*{#1}}}
\newrefformat{conj}{\savehyperref{#1}{Conjecture~\ref*{#1}}}
\newrefformat{fact}{\savehyperref{#1}{Fact~\ref*{#1}}}
\newrefformat{p}{\savehyperref{#1}{Proposition~\ref*{#1}}}
\newrefformat{prop}{\savehyperref{#1}{Proposition~\ref*{#1}}}
\newrefformat{prob}{\savehyperref{#1}{Problem~\ref*{#1}}}
\newrefformat{claim}{\savehyperref{#1}{Claim~\ref*{#1}}}
\newrefformat{clm}{\savehyperref{#1}{Claim~\ref*{#1}}}
\newrefformat{relax}{\savehyperref{#1}{Relaxation~\ref*{#1}}}
\newrefformat{rem}{\savehyperref{#1}{Remark~\ref*{#1}}}
\newrefformat{red}{\savehyperref{#1}{Reduction~\ref*{#1}}}
\newrefformat{part}{\savehyperref{#1}{Part~\ref*{#1}}}
\newrefformat{ex}{\savehyperref{#1}{Exercise~\ref*{#1}}}
\newrefformat{property}{\savehyperref{#1}{Property~\ref*{#1}}}
\newrefformat{type}{\savehyperref{#1}{Type~\ref*{#1}}}
\newrefformat{eg}{\savehyperref{#1}{Example~\ref*{#1}}}
\newrefformat{obs}{\savehyperref{#1}{Observation~\ref*{#1}}}
\newrefformat{que}{\savehyperref{#1}{Question~\ref*{#1}}}
\newrefformat{cond}{\savehyperref{#1}{Condition~\ref*{#1}}}
\newrefformat{ass}{\savehyperref{#1}{Assumption~\ref*{#1}}}
\newrefformat{not}{\savehyperref{#1}{Notation~\ref*{#1}}}
\newrefformat{cond}{\savehyperref{#1}{Condition~\ref*{#1}}}
\newcommand{\Sref}[1]{\hyperref[#1]{\S\ref*{#1}}}

\let\pref=\prettyref
\let\Cref=\prettyref

\renewcommand{\eps}{\varepsilon}

\newcommand{\calD}{\mathcal D}

\renewcommand{\calN}{\mathcal N}

\renewcommand{\R}{\mathbb R}

\renewcommand{\Z}{\mathbb Z}

\usepackage{needspace}
\newlength{\ppartneed}
\newcommand{\ppart}[1]{%
  \par\Needspace{\ppartneed}\addvspace{\smallskipamount}%
  \noindent\textit{#1.}\hspace{0.5em}\ignorespaces%
}

\usepackage{algorithm}
\usepackage{algpseudocode}
\algnewcommand\algorithmicinput{\textbf{Input: }}
\algnewcommand\INPUT{\State\algorithmicinput}
\algnewcommand\algorithmicinitialize{\textbf{Initialize: }}
\algnewcommand\INIT{\State\algorithmicinitialize}
\algnewcommand\algorithmicrun{\textbf{Run: }}
\algnewcommand\RUN{\State\algorithmicrun}
\algnewcommand\algorithmicupdate{\textbf{Update: }}
\algnewcommand\UPDATE{\State\algorithmicupdate}
\algnewcommand\algorithmicset{\textbf{Set: }}
\algnewcommand\SET{\State\algorithmicset}
\algnewcommand\algorithmicquery{\textbf{Query: }}
\algnewcommand\QUERY{\State\algorithmicquery}
\algnewcommand\algorithmicoutput{\textbf{Output: }}
\algnewcommand\OUTPUT{\State\algorithmicoutput}

\makeatletter
\newcommand\appendix@section[1]{%
  \refstepcounter{section}%
  \orig@section*{\@Alph\c@section.\texorpdfstring{\,\,\,\;}{}#1}
}
\let\orig@section\section
\g@addto@macro\appendix{\let\section\appendix@section}
\makeatother

\renewcommand{\paragraph}[1]{\medskip\noindent{\bf #1{.}}}

\makeatletter
\newcommand{\saveequation}[2]{
  #2 \label{#1}
  \protected@write\@mainaux{}{\string\SAVEEQUATION{#1}{\unexpanded{\unexpanded{#2}}}}%
}
\newcommand{\savetagequation}[3]{
  #3 \label{#1} \tag{#2}
  \protected@write\@mainaux{}{\string\SAVEEQUATION{#1}{\unexpanded{\unexpanded{#3}}}}%
}
\newcommand{\SAVEEQUATION}[2]{%
  \global\@namedef{SAVEDEQUATION@#1}{#2}%
}
\newcommand{\repeatequation}[1]{%
  \ifcsname SAVEDEQUATION@#1\endcsname
    \@nameuse{SAVEDEQUATION@#1}\tag{\ref{#1}}%
  \else
    ?? \notag
  \fi
}
\makeatother

\setlist[itemize]{topsep=-4pt, partopsep=2pt}

\usepackage{thmtools}
\declaretheoremstyle[%
  spaceabove=-2pt,%
  spacebelow=6pt,%
  headfont=\normalfont\itshape,%
  postheadspace=1em,%
  qed=\qedsymbol%
]{mystyle} 
\declaretheorem[name={Proof},style=mystyle,unnumbered,
]{prf}

\togglefalse{focs}

\begin{document}

\author{Yutong Li}
\address{Department of Applied Mathematics and Statistics, Johns Hopkins University, USA}
\email{\href{mailto:yli685@jh.edu}{yli685@jh.edu}}

\author{Juspreet Singh Sandhu}
\address{Department of Computer Science, Colorado State University, USA}
\email{\href{mailto:jsinghsa@ucsc.edu}{js.sandhu@colostate.edu}}

\author{Jonathan Shi}
\address{Chipletics Inc, Redmond WA, USA}
\email{\href{mailto:jshi@cs.cornell.edu}{jshi@cs.cornell.edu}}

\title[On overlap concentration in the Curie--Weiss Random Field model]{On overlap concentration in the Curie--Weiss Random Field model}

\begin{abstract}
\small
\noindent We show that the overlaps of two independent replicas drawn from the Gibbs measure of the Curie--Weiss model with random field (CWRF) exhibit sub-Gaussian tails when the inverse temperature $ \beta < 1$ and the random field is drawn from a Gaussian distribution that makes the expected magnetization of the CWRF equal to its expected overlap in the thermodynamic limit. To show this, we prove finite-moment concentration via a rigorous Laplace approximation for a certain random rate function, and ``bootstrap'' it to a uniform bound for the moment-generating function of the squared overlap deviations using a sharp analysis of the associated rate function.
\end{abstract}

\maketitle

\thispagestyle{empty}
\vspace{-5mm}
\renewcommand{\baselinestretch}{0.9}\normalsize
{
  \hypersetup{linkcolor=Red}
  \setcounter{tocdepth}{1}
  \tableofcontents
}
\renewcommand{\baselinestretch}{1.0}\normalsize

%
%
%
%
%
%

\newpage 
\pagenumbering{arabic}

\section{Introduction}\label{sec:introduction}
The Curie--Weiss random field (CWRF) model is perhaps the most \emph{basic} disordered mean-field model over Ising spins. Its Hamiltonian is given as
\[
    H(\sigma) := \frac{\beta^2}{2n}\left(\sum_{i=1}^n\sigma_i\right)^2 + \sum_{i=1}^n h_i\sigma_i\, ,
\]
where $\beta > 0$ is the inverse temperature parameter, and $h_i \sim \calD$ are sampled i.i.d.\,from $\calD$. The macroscopic behavior (such as the free-energy's thermodynamic limit) is characterized by the magnetization of this model \cite[\S 14]{bovier2016metastability} but the addition of the external field $\mathbf{h}$ makes the local Gibbs measure somewhat asymmetric. Consequently, another natural order-parameter for the system is the overlap
\[
    R_{\sigma, \tau} = \frac{1}{n}\sum_{i=1}^n\sigma_i\tau_i\,,
\]
where $\sigma, \tau$ are two independent samples from the Gibbs measure conditioned on a draw of the external field $\mathbf{h} \sim \calD^{\ot n}$. The behavior of this statistic cannot generally be recovered from the magnetization alone. Prior work on the CWRF has given a fairly detailed account for the behavior of the magnetization (see \S\,\ref{subsec:related-work}). We take a complementary two-replica point of view: our aim is to identify and prove nonasymptotic concentration estimates for overlaps. Thus, our main result is an (annealed) bound demonstrating sub-Gaussians tails for the centered overlaps (\pref{thm:main}). 

\subsection{Motivation} This question is motivated by the role of overlaps throughout the theory of disordered spin systems, both mathematically \cite{parisi1980order,parisi1980sequence,talagrand2010mean,talagrand2010mean2} and (more recently) algorithmically \cite{huang2024sampling,davies2026potential}. 

\paragraph{Overlap concentration and high-temperature behavior} In mean-field spin glasses, concentration of the replica overlap is a central signature of the high-temperature regime (also known as the replica-symmetric regime). For instance, using a mixture of the Guerra interpolation and cavity interpolation, Talagrand's high-temperature analysis of the Sherrington--Kirkpatrick (SK) model bounds an exponential square moment of the centered overlaps \cite[Theorem~1.4.1]{talagrand2010mean}. A later result extends the same (qualitative) estimate into the interior of the replica-symmetric region under a global variational condition and strict AT stability \cite[Theorem~13.7.1]{talagrand2010mean2}. The conclusion proved in our result is of the same type: uniform exponential integrability of $n(R_{\sigma,\tau}-q^*)^2$ or, equivalently, sub-Gaussian tails for
$\sqrt n(R_{\sigma,\tau}-q^*)$.

\paragraph{Disorder chaos and superconcentration} Our main result about the tails of the overlap is also the natural endpoint of a question about overlaps drawn from two correlated copies of the model: 
\begin{center}
    \emph{If the centered Gaussian field is evolved by the Ornstein--Uhlenbeck semigroup, on sampling one replica before and another after the perturbation, how does the cross-overlap change with the noise-level?}
\end{center} 
For the SK model with no external field, Chatterjee used this coupling to prove disorder chaos and, through the Gaussian variance representation, a superconcentration bound for the free energy
\cite{chatterjee2009disorder,chatterjee2014superconcentration}. The analogy requires a careful interpretation in the case of the CWRF: at $\beta=0$ the CWRF becomes a random linear field, and its free energy is $\sum_i\log(2\cosh h_i)$, whose variance is of order $n$ whenever the field is nondegenerate. Thus, a disorder chaos question for the CWRF free energy would first have to separate this one-site disorder contribution. With this in mind, \pref{thm:main} supplies the required zero-perturbation baseline for a quantitative study of the disorder-perturbed overlaps. To the best of our knowledge, no previous work gives a characterization of the moment bounds for the overlap of the CWRF under two Ornstein--Uhlenbeck-correlated random fields.

\paragraph{Sampling algorithms} For the CWRF model, there is already strong high-temperature control of the dynamics. In the normalization used below, the interaction matrix is $\beta^2\mathbf 1\mathbf 1^{\mathsf T}/n$, so the spectral-gap theorem of Eldan, Koehler, and Zeitouni applies throughout $\beta<1$ and yields a
Poincar\'e inequality and fast Glauber mixing uniformly in the external field \cite{eldan2022spectral}. Note that for the CWRF, classical Dobrushin contraction applies independently in the same range, since the total single-site influence is strictly smaller than $\beta^2$ \cite{dobrushin1968description}. These
statements control relaxation and fluctuations around finite-volume quenched means. Their formulations do not, however, identify the deterministic center or state the annealed exponential-square estimate considered here. In contrast, our proof develops a three-dimensional random Laplace method that tracks replica magnetizations and their overlap simultaneously. More recently, overlap concentration has been shown to be critical in the analysis of sampling algorithms for mean-field spin-glasses in the high-temperature regime \cite{huang2024sampling,davies2026potential}.

\paragraph{Relationship to the planted SK model} The CWRF model is also closely related to the stochastic localization (SL) process \cite[(11)]{Eld13} for the SK model~\cite{davies2026potential}. Stochastic localization is a time-indexed process of increasingly localized distributions that can be characterized by the addition of a time-dependent linear \vocab{tilt} to the original model's Hamiltonian, noted for being the central object in a program to prove fast mixing for Markov chains~\cite{chen2022localization} and for its equivalence to diffusion models~\cite{Mon23}. Talagrand~\cite[Proposition 1.4.8 and Theorem 1.4.1]{talagrand2010mean} showed how to reduce overlap concentration in the SK model to overlap concentration in the random linear model, and applying an analogous line of reasoning to the planted representation of the SL-tilted SK model \cite{el2022sampling} yields the CWRF (with random field of the type considered in this paper) as the corresponding target under a Guerra interpolation \cite[\S 1.4]{talagrand2010mean}.

\subsection{Main result} In this paper, we fix $t \geq 0$. Let $\mathbf h^t \sim \calN(t\mathbf{1}_n,(\beta^2q^* + t)\Id_n)$ be the external field\footnote{\,This particular choice of field makes the resulting CWRF the target model for a Guerra interpolation applied to the planted representation of the SL-tilted SK model.}.
Let $(m^*,q^*)$ solve the fixed-point equations in \eqref{eq:fixpoint-equations}.
Our main result shows that the fluctuations of the overlaps for this CWRF model exhibit sub-Gaussian tails.

\begin{theorem}[CWRF has sub-Gaussian overlap concentration]\label{thm:main}
    For every $t \ge 0$ and $0< \beta < 1$, there exists a constant $\lambda > 0$ such that
    \[
        \E_{\mathbf h^t}\left[\an{e^{\lambda n \left(R_{\sigma, \tau}-q^*\right)^2}}_{\mathbf h^t}\right] \le 2\,,
    \]
    where $\an{\cdot}_{\mathbf{h}^t}$ denotes averaging with respect to two independent copies of the Gibbs measure for the CWRF given $\mathbf{h}^t$, and $\E_{\mathbf{h}^t}[\cdot]$ denotes an expectation with respect to $\calN(t\mathbf{1}_n,(\beta^2q^* + t)\Id_n)$.
\end{theorem}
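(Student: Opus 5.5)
Since the quantity to bound is a two-replica Gibbs average of a function of the overlap, I will first reduce it to a finite-dimensional computation. Write the two-replica partition function as a sum over $(\sigma,\tau)\in\{\pm1\}^{2n}$ of $\exp\bigl(\tfrac{\beta^2 n}{2}(m_\sigma^2+m_\tau^2)+\sum_i h_i(\sigma_i+\tau_i)\bigr)$, where $m_\sigma = \tfrac1n\sum_i\sigma_i$. Linearize each quadratic term with a Hubbard--Stratonovich Gaussian integral in auxiliary variables $x,y$. Then linearize the overlap tilt $e^{\lambda n(R-q^*)^2}$ with a third Gaussian variable $z$. After these steps the spins decouple site by site. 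The numerator becomes
\[
\int \exp\Bigl(n\,\Phi_n(x,y,z)\Bigr)\,dx\,dy\,dz ,\qquad
\Phi_n=-\tfrac{x^2+y^2}{2\beta^2}-\tfrac{z^2}{4\lambda}-zq^*+\tfrac1n\sum_i \log \sum_{s,u=\pm1} e^{(h_i+x)s+(h_i+y)u+zsu}.
\]
The denominator is the same expression with $z=0$ and no $z$-integral. Thus $\an{e^{\lambda n(R-q^*)^2}}_{\mathbf h}$ is a ratio of two random Laplace integrals: a three-dimensional one and a two-dimensional one. This is the ``three-dimensional random Laplace method'' tracking both replica magnetizations and their overlap.

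\textbf{Step 1 (deterministic rate function).} Replace the empirical average by its expectation under $h\sim\calN(t,\beta^2q^*+t)$, giving a limiting function $\Phi(x,y,z)$. I will check that for $\beta<1$ and $\lambda$ small, $\Phi$ has a unique global maximizer at $(x,y,z)=(\beta^2m^*,\beta^2m^*,0)$. At this point the fixed-point equations \eqref{eq:fixpoint-equations} give stationarity in $z$: $\E\tanh^2(h+\beta^2 m^*)=q^*$. They also give stationarity in $x,y$: $\E\tanh(h+\beta^2m^*)=m^*$. Here the Nishimori-type choice of field, which forces $m^*=q^*$, is what makes both equations hold simultaneously. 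Strict concavity follows because the Hessian is $-\mathrm{diag}(\beta^{-2},\beta^{-2},(2\lambda)^{-1})$ plus a covariance matrix of $(s,u,su)$ under the single-site measure. Each entry of that covariance is at most $1$, so for $\beta<1$ and $\lambda$ small the Hessian is negative definite uniformly. I expect this uniform concavity to in fact hold globally in $(x,y)$, because the log-sum-exp term has Hessian bounded by $\mathrm{Var}\le 1$. That makes the argument a genuine concavity argument rather than a local one.

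\textbf{Step 2 (random fluctuations and moments).} The random part $\Phi_n-\Phi$ is an average of i.i.d.\ Lipschitz functions of $h_i$, so it is a sub-Gaussian process of size $n^{-1/2}$ with uniformly bounded gradients. Because $\Phi_n$ itself is uniformly strongly concave, the maximizers are controlled as follows. The maximizer $(x_n,y_n,z_n)$ of $\Phi_n$ and the maximizer $(x_n^0,y_n^0)$ of the $z=0$ slice lie within $O(|\nabla(\Phi_n-\Phi)|)$ of the deterministic point. A Laplace/Gaussian comparison, exact up to constants by strong concavity (Brascamp--Lieb/Prékopa type bounds), then gives
\[
\an{e^{\lambda n(R-q^*)^2}}_{\mathbf h}\le C\exp\bigl(n[\Phi_n^{\max}-\Phi_n^{0,\max}]\bigr)\le C\exp\bigl(c\lambda\, n\,|G_n|^2\bigr).
\]
Here $G_n=\tfrac1n\sum_i(\tanh^2(h_i+\beta^2m^*)-q^*)$ plus first-order terms, a centered average, so $\sqrt n G_n$ is sub-Gaussian. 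This is the finite-moment concentration step.

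\textbf{Step 3 (bootstrap to $\le 2$) and the main obstacle.} Taking $\E_{\mathbf h}$ and using sub-Gaussianity of $\sqrt nG_n$ makes $\E e^{c\lambda n G_n^2}$ finite once $\lambda$ is small. The main difficulty is the constant $C$ from the Laplace comparison: the target is the bound $2$, not merely finiteness. I would handle this by making $C=1+O(\lambda)$ explicit. The $z$-integral is Gaussian with variance $2\lambda/n$ times a strongly concave perturbation, so its normalized contribution is $(1-2\lambda\,\mathrm{Var})^{-1/2}$ up to corrections. Shrinking $\lambda$ then pushes both factors below $\sqrt2$, uniformly in $n$. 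Small $n$ is trivial, since $n(R-q^*)^2\le 4n$ and $\lambda$ can be chosen small enough for the finitely many small $n$.
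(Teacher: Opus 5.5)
Your Step 1 contains a false claim, and all of Step 2 rests on it. The entrywise bound $|\Cov|\le 1$ does not make $-\mathrm{diag}(\beta^{-2},\beta^{-2},(2\lambda)^{-1})+\Cov(s,u,su)$ negative definite. On the $(x,y)$-block it only gives $\|\Cov(s,u)\|_{\mathrm{op}}\le 2$, and this bound is attained. Take $x=y$ and $z\to-\infty$. The single-site law concentrates on $su=-1$, the field $h_i$ cancels, and $s$ becomes uniform. So $\Cov(s,u)\to\begin{pmatrix}1&-1\\-1&1\end{pmatrix}$, and the second derivative of $\Phi$ in the direction $(1,-1,0)/\sqrt2$ tends to $2-\beta^{-2}$. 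The same holds for $\Phi_n$, for every realization of $\mathbf h$. This limit is positive for $\beta>1/\sqrt2$. Taking $\lambda$ small does not help: it only steepens the penalty $z^2/(4\lambda)$ and leaves the $(x,y)$-block at a fixed large $|z|$ unchanged. This is exactly the obstruction in \S\ref{subsec:counterexample}. Consequently, for $1/\sqrt2\le\beta<1$, three steps are unjustified: that $\Phi_n$ is uniformly strongly concave, the localization of its maximizers, and the Brascamp--Lieb/Pr\'ekopa comparison. As written, your argument covers only $\beta<1/\sqrt2$.

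The missing idea is to confine $z$. The paper does this with the stationarity equation in the overlap variable. In your normalization, any global maximizer satisfies $z/(2\lambda)+q^*=\E\,\E_{\pi}[su]\in[-1,1]$, hence $|z|\le 2\lambda(1+q^*)$. On that slab $|\Cov_\pi(s,u)|\le\tanh|z|$, so the $(x,y)$-block of $-\nabla^2\Phi$ is at least $(\beta^{-2}-1-\tanh|z|)I$, and a Schur complement absorbs the $z$-row. Global concavity is then replaced by uniqueness of the maximizer, a local Taylor expansion, a uniform gap on an annulus (net plus Borel--Cantelli), and coercivity.

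Within your own framework there is a lighter repair. For fixed $(x,y)$, the map $z\mapsto\Phi_n(x,y,z)$ is $((2\lambda)^{-1}-1)$-strongly concave. The slice $z=0$ is strongly concave for every $\beta<1$, because the two replicas are independent there. Integrate out $z$ first, using $\partial_z^2\le 1$ for the log-partition term. This gives $\max_z\Phi_n(x,y,z)-\Phi_n(x,y,0)\le\frac{\lambda}{1-2\lambda}(\bar O(x,y)-q^*)^2$, with $\bar O(x,y)=\frac1n\sum_i\tanh(h_i+x)\tanh(h_i+y)$. Everything then reduces to a strongly log-concave measure on $\R^2$, and joint concavity is never needed. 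This route also produces your $c\lambda$ in the exponent, which is sharper than the paper's $e^{n\Gamma_1/2}$.

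Finally, the ``main obstacle'' in Step 3 is not one. Once $\sup_n\E\an{e^{\lambda_0 n(R_{\sigma,\tau}-q^*)^2}}\le C$, Jensen gives $\E\an{e^{\lambda n(R_{\sigma,\tau}-q^*)^2}}\le C^{\lambda/\lambda_0}\le 2$ for $\lambda$ small enough (e.g.\ $\lambda\le\lambda_0\log 2/\log C$ when $C>2$). This is what the paper's appeal to the sub-Gaussian equivalences amounts to, so you do not need $C=1+O(\lambda)$. Also, stationarity at $(\beta^2m^*,\beta^2m^*,0)$ follows directly from the two fixed-point equations; $m^*=q^*$ plays no role there.
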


\subsection{Notation}\label{subsec:Notation}
Let $(m^*,q^*)$ solve the fixed-point equations
\begin{equation}\label{eq:fixpoint-equations}
    m^* = \E_{h \sim \calN(0,1)}\left[\tanh\left(\beta^2 m^* + t + \sqrt{\beta^2 q^* + t}h\right)\right]\, , \text{ and }q^* = \E_{h \sim \calN(0,1)}\left[\tanh^2\left(\beta^2 m^* + t + \sqrt{\beta^2 q^* + t}h\right)\right]\, .
\end{equation}
The existence of $m^*$ and $q^*$ is guaranteed by \cite{davies2026potential}. 

Let $(\Omega,\mathcal A,\P)$ be a probability space supporting an i.i.d. sequence $(h_i^t)_{i\geq1}$ with distribution $h_i^t\sim\mathcal N\left(t,\beta^2q^*+t\right)$. For every $n\in\Z_{>0}$, define the random field $\mathbf h^t:=(h_1^t,\ldots,h_n^t)$. We also write $h^t\sim\mathcal N\left(t,\beta^2 q^* + t\right)$ for a generic random variable with the same distribution. Let $\mathbf h=(h_1,\ldots,h_n)\in\mathbb R^n$ be a fixed realization of the random field $\mathbf h^t$. In this paper, we frequently write ``for almost every realization $\mathbf h = \mathbf h^t(\omega)$'', which means there exists an event $\Omega_0 \in \mathcal{A}$ with $\mathbb{P}(\Omega_0) = 1$ such that $\omega \in \Omega_0$. Given a realization $\mathbf h$, the Gibbs measure is defined by
\[
\mu_{n,\mathbf h}(\sigma):=\frac{1}{Z_n(\mathbf h)}\exp\left\{\frac{\beta^2}{2n}\left(\sum_{i=1}^n\sigma_i\right)^2+\sum_{i=1}^nh_i\sigma_i\right\},\qquad \mathbf \sigma\in\{-1,+1\}^n,
\]
where
\[
Z_n(\mathbf h):=\sum_{\sigma\in\{-1,+1\}^n}\exp\left\{\frac{\beta^2}{2n}\left(\sum_{i=1}^n\sigma_i\right)^2+\sum_{i=1}^nh_i\sigma_i\right\}.
\]
Let $\sigma,\tau \stackrel{\mathrm{i.i.d.}}{\sim} \mu_{n,\mathbf h}$ and define the overlap between two spin configurations $\sigma$ and $\tau$ by $R_{\sigma,\tau}:=\frac{1}{n}\sum_{i=1}^n\sigma_i\tau_i$. For any integrable $f:\left(\{-1, +1\}^n\right)^2 \rightarrow \mathbb{R}$, define
\[
    \langle f\rangle_\mathbf h = \sum_{\sigma,\tau\in\{-1,+1\}^n}f(\sigma,\tau) \mu_{n,\mathbf h}(\sigma)\mu_{n,\mathbf h}(\tau) = \E_{(\sigma, \tau)\sim \mu_{n, \mathbf h}^{\otimes 2}}\left[f(\sigma, \tau)\right]\,.
\]
\subsection{Related work}\label{subsec:related-work} We briefly overview prior work on the CWRF model, as well as investigations about overlap concentration and disorder chaos in other canonical mean-field spin-glass models.

\paragraph{Magnetization: deviations and propagation of chaos}
The equilibrium magnetization of the CWRF has been studied at several scales. First, de Matos and Perez gave an early analysis of its fluctuations \cite{amaro1991fluctuations}. L\"owe, Meiners, and Torres then established a quenched large-deviation principle at speed $n$ with an explicit rate function \cite{lowe2013large}. Following up on this, L\"owe and Meiners also proved moderate-deviation principles around global minima, with the scale and rate reflecting the type of the minimum \cite{lowe2012moderate}. Most related to our techniques, Kabluchko and L\"owe proved quenched propagation of chaos \cite[Theorem~1.2]{kabluchko2024propagation}. Their quantitative argument also yields a uniform likelihood-ratio approximation when $k_n=o(n^{1/2-\eta})$ for every fixed $\eta>0$ \cite[Remarks~1.5(3) and~3.1]{kabluchko2024propagation}. Ben Arous and Zeitouni introduced increasing propagation of chaos for
mean-field Gibbs measures \cite{benarous1999increasing}. Their techniques prove that, for the Curie--Weiss model, the scale of relative-entropy convergence is only valid for $k_n=o(\sqrt n)$-coordinate marginals, and the critical fluctuation law obstructs product approximation at order $\sqrt n$.

\paragraph{Overlap concentration and disorder perturbations}
For the SK model, Talagrand proves exponential-square concentration of the overlap in a high-temperature regime
\cite[Theorem~1.4.1]{talagrand2010mean}. In \cite[\S\,13]{talagrand2010mean2}, Talagrand obtains the same qualitative bound inside the replica-symmetric region under both a global variational condition and strict a stability condition \cite[Theorem~13.7.1]{talagrand2010mean2}. Chatterjee's Ornstein--Uhlenbeck representation connects cross-overlaps under correlated disorder to free-energy variance and implies, in the zero-field SK model, disorder chaos and superconcentration \cite{chatterjee2009disorder,chatterjee2014superconcentration}. For random-field models, the picture is different: same-disorder overlap self-averaging has been proved for the finite-dimensional RFIM outside an exceptional parameter set \cite{chatterjee2015absence}.

\subsection{Proof overview}\label{subsec:proof-overview} The overall idea is to control the even-moments explicitly, obtain similar decay for odd moments via Jensen's inequality, and then ``bootstrap'' this to control the MGF by showing the control over the finite moments permits a geometric series convergence. The crux of the proof lies in two key ingredients: an implementation of a rigorous (random) Laplace method approximation scheme, and proofs about the local analytic properties of a certain three-dimensional rate function.

\paragraph{Uniqueness and equality of fixed-points} Before beginning the analysis to control the MGF, it is critical to establish that the fixed-points $(m^*,q^*)$ are unique (and equal) as they will be the critical points around which Taylor expansions in the Laplace approximation are conducted. By an analysis exactly akin to that of \cite[Proof of Lemma 5.32]{davies2026potential} with a slight tightening of a Lipschitz estimate using the properties of $\tanh(\cdot)$ in conjunction with the explicit form of the Gaussian density for $\calN(t,t)$, it is easy to conclude that $(m^*,q^*)$ exist, and that they are unique with $m^* = q^*$ when $0 < \beta < 1$ (\pref{lem:extend-nishimori-condition-beta-1}).

\paragraph{Controlling the even moments} We first rewrite $\left\langle\left(R_{\sigma,\tau}-q^*\right)^{2k}\right\rangle_{\mathbf h} $ for any fixed realization of the external field $\mathbf h=\mathbf h^t(\omega)$. Using the definitions of the normalized overlap and explanding the Gibbs average,
\[
\begin{aligned}
    \left\langle n^k\left(R_{\sigma,\tau}-q^*\right)^{2k}\right\rangle_{\mathbf h} &= \E_{(\sigma, \tau)\sim \mu_{n, \mathbf h}^{\otimes 2}}\left[\left(n\left(R_{\sigma,\tau}-q^*\right)^{2}\right)^k\right]\\
    &= k\int_0^\infty s^{k-1} \P_{(\sigma, \tau)\sim \mu_{n, \mathbf h}^{\otimes 2}}\left(n\left(R_{\sigma,\tau}-q^*\right)^2>s\right) \,ds\\
    &= k\int_0^\infty s^{k-1} \E_{(\sigma, \tau)\sim \mu_{n, \mathbf h}^{\otimes 2}}\left[ \mathbf 1_{\{n\left(R_{\sigma,\tau}-q^*\right)^2>s\}}\right] \,ds\\
    &= k\int_0^\infty s^{k-1} \left\langle \mathbf 1_{\{n\left(R_{\sigma,\tau}-q^*\right)^2>s\}} \right\rangle_{\mathbf h} \,ds.
\end{aligned}
\]
Obtaining this integral over a Gibbs averaged indicator function, we finally take the expectation over $\mathbf h^t(\omega)$. An application of Tonelli's theorem allows for the swapping of the integral and $\E_{\mathbf h^t}[\cdot]$, yielding
\begin{equation}\label{eq:expectation-and-integral}
    \E_{\mathbf h^t}\left[\left\langle\left(R_{\sigma,\tau}-q^*\right)^{2k}\right\rangle_{\mathbf h^t}\right]=\frac{k}{n^k}\int_0^\infty s^{k-1}\E_{\mathbf h^t}\left[\left\langle\mathbf 1_{\left\{n\left(R_{\sigma,\tau}-q^*\right)^2>s\right\}}\right\rangle_{\mathbf h^t}\right]\,ds.
\end{equation}
Note that for any $\lambda >0$ and fixed external-field $\mathbf h \in \R^n$,
\[
\left\langle\mathbf 1_{\{n\left(R_{\sigma,\tau}-q^*\right)^2>s\}}\right\rangle_\mathbf h\leq e^{-\lambda s}\left\langle e^{\lambda n\left(R_{\sigma,\tau}-q^*\right)^2} \right\rangle_\mathbf h,
\]
and therefore we need to establish an upper bound of $\left\langle e^{\lambda n\left(R_{\sigma,\tau}-q^*\right)^2} \right\rangle_\mathbf h$ and then average it over $\mathbf h \sim \calN(t,\beta^2q^* + t)$.

\paragraph{Rigorous Laplace method for random rate functions} A result of~\cite[Theorem 1.2]{kabluchko2024propagation} implies that not-too-large marginals of the CWRF are close in total variation distance to an explicit product measure, provided that $m^*$ (the magnetization) is a unique solution to \eqref{eq:fixpoint-equations}. As shown in~\pref{lem:extend-nishimori-condition-beta-1}, this uniqueness holds provided $\beta < 1$. It would be ideal to combine this ``nearly-product'' decomposition with an argument akin to the proof of~\cite[Theorem 1.4.8]{talagrand2010mean} for product measures to obtain overlap concentration and, therefore, control the term in \eqref{eq:expectation-and-integral}.

Unfortunately, the ``propagation of chaos''~\cite[Theorem 1.2]{kabluchko2024propagation} can only apply to marginals of size $O(n^{1/2-\eta})$ (for some small $\eta > 0$). Any naive attempt at using this result to split the $n$ coordinates into smaller blocks of that size, invoke the ``near-product'' distribution of each block to apply~\cite[Theorem 1.4.8]{talagrand2010mean} verbatim within the blocks, and then control variances across the blocks via conditional Gaussian MGF evaluations and Bernstein-type bounds does not seem to allow control of the exponential moment of the squared overlap deviation. 

Consequently, we are forced to do Laplace-method approximations around the critical point of the exponential moment of the squared overlap deviation after applying the Hubbard--Stratanovitch (HS) transform -- this extends the ideas of Kabluchko and L\"owe~\cite[\S3]{kabluchko2024propagation} to a three-dimensional analogue tracking \emph{two} fixed-point equations (\ref{eq:fixpoint-equations}). 
The Laplace approximation is done by first applying the Hubbard--Stratanovitch transform thrice to obtain
\[
\an{e^{\lambda n\left(R_{\sigma,\tau} - q^*\right)^2}}_\mathbf h = \sqrt{\frac{\lambda n}{\pi}} \frac{\int_{\R^3}e^{\bar F(\mathbf h; u,v,w)n}du\,dv\,dw}{\int_{\R^2}e^{\bar F(\mathbf h; u,v,0)n}du\,dv}\,,
\]
for certain explicit (rate) functions $\bar F(\mathbf h; u,v,w)$ (see \S\,\ref{sec:bounding-overlap-moments-for-beta-1/2}).
The critical region is when the magnetizations $(x,y)$ of the two replicas lie in $(\beta m^* - n^{-\delta}, \beta m^* + n^{-\delta})$ and the conditional (normalized) overlap is $\approx q^*$, for the explicit choice $(\beta m^*,\beta m^*,0)$ as a critical point. After this, the goal becomes to assert that the critical point is the global maximum. This is accomplished by estimating the integral via the Laplace method prediction, and bounding the error via uniform control of the third derivatives in conjunction with concentration arguments that control the suprema of sub-Gaussian fluctuations inside a reasonable sized annulus along with sharp decay for the tails. 

\paragraph{Analytic desiderata for the rate function}\label{subsec:multioverlaps-cwrf}  We use the following facts to permit a rigorous Laplace approximation for $0 <\beta < 1/2$ and $0 < \lambda < 7/\16$:
\allowdisplaybreaks
\begin{enumerate}[itemsep=0.5em]
    \item \textbf{(Unique global maximizer):} The function $\td F(u,v,w)$ (see \S\,\ref{sec:bounding-overlap-moments-for-beta-1/2}) has a unique global maximizer at $(\beta m^*,0,0)$.
    \item \textbf{(Negative definite-ness of $\nabla^2 \td F(\beta m^*,0,0)$):} $\nabla^2 \td F(u,v,w)_{u=\beta m^*, v = w = 0} \prec 0$.
    \item \textbf{(Uniformly bounded third-order derivatives):} $|\partial_{i,j,k} \bar F(\mathbf h; u,v,w)| \le C_{\beta,\lambda}\,, \forall\,i,j,k \in \{x,y,w\}^{3}$.
\end{enumerate}
Proving these estimates relies on performing various explicit calculus computations on the rate function, and probing their local analytic properties, and is done in \S\,\ref{sec:bounding-overlap-moments-for-beta-1/2}.
 To apply Gaussian-type estimates to the curvature term, it becomes critical to argue that a ``reduced'' Hessian is negative-definite, and some calculus then reveals that this limits $\beta < 1/2$ and $\lambda < 7/16$. The full random Laplace estimation strategy is carried out in the proof of \pref{thm:whp-overlap-concentration-laplace-cwrf}. Showing that the bound in \pref{thm:whp-overlap-concentration-laplace-cwrf} holds with sufficiently high probability (\pref{lem:probability-good-event}) allows for the use of explicit calculus along with the fact that the even moments give rise to a geometric series to bound the MGF of the squared overlap deviations (\pref{thm:uniform-exponential-moment-bound-for-0-beta-1/2}).

\paragraph{Extending the analysis to $\beta < 1$ via refined convexity} The preliminary proof analyzing the rate function asks for global concavity in $v$, including at extreme values of the auxiliary overlap coordinate $w$ that does \emph{not} govern the Laplace integral. For very negative $w$, the two-spin tilt strongly favors $\sigma\tau=-1$ and the $v$ direction has significantly less influence. Consequently, once $\beta>1/\sqrt2$, it can even acquire positive curvature, and so a global curvature certificate cannot cover all $\beta<1$ (see \S\,\ref{subsec:counterexample} for a counter-example).

In \pref{sec:extending-to-beta-1}, we pivot the strategy to use the stationarity equation in $w$. If $(x^*,y^*,w^*)$ is a global maximizer of the rate function, then
\begin{equation}\label{eq:slab-overview}
    w^*+q^*=\E_{h^t}\E_{\pi_{h^t}}[\sigma\tau]\in[-1,1],
    \qquad w^*\in[-1-q^*,1-q^*]\subseteq[-2,1],
\end{equation}
where
\[
    \pi_h(\sigma,\tau)\propto
    \exp\{(\beta x+h)\sigma+(\beta y+h)\tau
                   +2\lambda w\sigma\tau\}\, ,
\]
denotes a Gibbs-type density induced by the three-dimensional rate function. Therefore, it is enough to control the Hessian on the convex slab $\R^2\times[-1-q^*,1-q^*]$ which avoids the large values of $w$ where counterexamples arise (\pref{thm:uniqueness-global-maximizer-beta-less-than-one}). We now give a rough sketch of the argument that analyzes the Hessian: in the original $(x,y,w)$ coordinates, some calculus reveals that the Hessian can be written as a block matrix
\[
    \nabla^2\hat F=
    \begin{pmatrix}A&\mathbf b\\ \mathbf b^{\mathsf T}&d\end{pmatrix}\,,
\]
and
\begin{align*}
    A&=-I_2+\beta^2\E_{h^t}\left[
       \Cov_{\pi_{h^t}}\binom{\sigma}{\tau}\right],\\
    \mathbf b&=2\beta\lambda\E_{h^t}\left[
       \binom{\Cov_{\pi_{h^t}}(\sigma,\sigma\tau)}
             {\Cov_{\pi_{h^t}}(\tau,\sigma\tau)}\right],\\
    d&=-2\lambda+4\lambda^2\E_{h^t}
       \left[\Var_{\pi_{h^t}}(\sigma\tau)\right].
\end{align*}
The key point is that, for the two-spin law above,
\[
    |\Cov_{\pi_h}(\sigma,\tau)|
    \leq\tanh(2\lambda|w|)
    \leq\tanh(4\lambda)\leq4\lambda\,,
\]
throughout the convex slab. Together with Cauchy--Schwarz, this gives
\[
    -A\succeq(1-\beta^2-4\beta^2\lambda)I_2,
    \qquad \norm{\mathbf b}_2^2\leq8\beta^2\lambda^2,
    \qquad\text{and\, } -d\geq2\lambda-4\lambda^2.
\]
Choosing
\[
    0<\lambda<\min\left\{\frac14,
                \frac{1-\beta^2}{16\beta^2}\right\},
\]
the Schur complement absorbs the mixed block and makes
$-\nabla^2\hat F$ positive definite throughout the slab. Since every global maximizer lies in the slab and $\td c$ is stationary there, $\td c$ is the unique global maximizer. Using a proof similar to that in \pref{sec:bounding-overlap-moments-for-beta-1/2}, we extend the result of \pref{thm:uniform-exponential-moment-bound-for-0-beta-1/2} to the case $\beta < 1$ to obtain \pref{thm:uniform-exponential-moment-bound-for-0-beta-1}, which proves \pref{thm:main}.

\section{Uniqueness of fixed-point solutions for \texorpdfstring{$0 <\beta < 1$}{0 < beta < 1}}\label{sec:uniqueness-of-fixed-point-solutions} In this section, we show $(m^*, q^*)$ are unique to the fixed-point equations for $0<\beta <1$. Furthermore, as an additional conclusion of the proof, we have $m^* = q^*$, which is an extension of \cite[Lemma 5.32]{davies2026potential}.

\begin{lemma}[Uniqueness of fixed-points for $0<\beta<1$]\label{lem:extend-nishimori-condition-beta-1}
Assume $0<\beta<1$, and let $(m^*,q^*)$ solve the fixed-point equations as defined in~\pref{eq:fixpoint-equations}, then $m^*$ and $q^*$ are unique solutions to the fixed-point equations, in addition, $m^*=q^*$.
\end{lemma}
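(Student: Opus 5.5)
The plan is to use the Nishimori-type identity to produce a solution with $m=q$, and then prove that the fixed-point system has at most one solution. Together these give both claims of the lemma. Throughout, write
\[
\Phi(m,q):=\bigl(\E\tanh(Y_{m,q}),\ \E\tanh^2(Y_{m,q})\bigr),\qquad Y_{m,q}:=\beta^2m+t+\sqrt{\beta^2q+t}\,h,\quad h\sim\calN(0,1),
\]
so that \eqref{eq:fixpoint-equations} reads $(m^*,q^*)=\Phi(m^*,q^*)$. Note that $\Phi$ maps $[-1,1]\times[0,1]$ into itself.

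\textbf{Step 1 (a diagonal solution exists).} For $a\ge 0$ and $Y\sim\calN(a,a)$, the Gaussian Nishimori identity gives $\E\tanh(Y)=\E\tanh^2(Y)$. One can check this directly: $Y$ has the law of the posterior-mean parameter in a binary Gaussian channel, or one can write out the density of $\calN(a,a)$ and symmetrize $y\mapsto -y$, using that the density ratio at $\pm y$ is $e^{2y}$. On the diagonal $m=q$, the mean and variance of $Y_{q,q}$ both equal $a=\beta^2q+t$. Hence both coordinates of $\Phi(q,q)$ equal $\psi(\beta^2q+t)$, where $\psi(a):=\E\tanh(a+\sqrt a\,h)$. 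The map $q\mapsto\psi(\beta^2q+t)$ is continuous from $[0,1]$ to $[0,1]$, so it has a fixed point $\bar q$. Then $(\bar q,\bar q)$ solves \eqref{eq:fixpoint-equations}. Once uniqueness is established, this forces $m^*=q^*=\bar q$.

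\textbf{Step 2 (eliminate $m$).} For fixed $q$, the map $m\mapsto\E\tanh(Y_{m,q})$ has derivative $\beta^2\E\sech^2(Y_{m,q})\in(0,\beta^2]$. Since $\beta<1$, it is a contraction, so there is a unique $M(q)$ solving the first equation. By the implicit function theorem $M$ is $C^1$, with
\[
M'(q)=\frac{\partial_q\E\tanh(Y)}{1-\beta^2\E\sech^2(Y)} .
\]
Any solution of \eqref{eq:fixpoint-equations} is therefore $(M(q),q)$ with $q$ a fixed point of the scalar map $G(q):=\E\tanh^2(Y_{M(q),q})$ on $[0,1]$. It remains to show that $G$ has a unique fixed point. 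I would aim to prove $\sup_q|G'(q)|<1$.

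\textbf{Step 3 (derivative bounds, the main obstacle).} Gaussian integration by parts gives, for smooth $f$,
\[
\partial_q\E f(Y)=\tfrac{\beta^2}{2}\E f''(Y),\qquad \partial_m\E f(Y)=\beta^2\E f'(Y).
\]
This yields $\partial_q\E\tanh^2(Y)=\beta^2\E[\sech^2(1-3\tanh^2)](Y)$ and $\partial_q\E\tanh(Y)=-\beta^2\E[\sech^2\tanh](Y)$. Hence
\[
G'(q)=\beta^2\E[\sech^2(1-3\tanh^2)]+2\beta^2\E[\tanh\sech^2]\cdot M'(q).
\]
Crude sup-norm bounds, namely $|\sech^2(1-3\tanh^2)|\le1$ and $|\tanh\sech^2|\le 2/(3\sqrt3)$, give a Lipschitz constant of order $\beta^2(1+c\,\beta^2/(1-\beta^2))$. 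This is not below $1$ uniformly in $\beta<1$, and this is exactly where the tightening is needed.

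I would try two refinements, first replacing the separate supremum bounds by joint bounds on the same Gaussian expectation. First, exploit the sign structure. The term $\E[\sech^2(1-3\tanh^2)]$ is strictly less than $\E\sech^2$, and $\E\tanh\sech^2$ multiplied by $-\E\tanh\sech^2$ contributes nonpositively, since $M'$ carries a minus sign. This makes the second term of $G'$ negative when $\E\tanh\sech^2\ge0$. That holds because $Y$ has nonnegative mean $\beta^2M(q)+t\ge0$ once $M(q)\ge0$, which follows from a monotonicity and symmetry argument. The consequence is $G'(q)\le\beta^2\E\sech^2(Y)<1$. Second, bound $G'$ from below by a quantity exceeding $-1$. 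Here I would use the explicit Gaussian density, as in \cite[Proof of Lemma 5.32]{davies2026potential}, to control $\E[\tanh\sech^2]^2\le\E\sech^2\cdot\E[\tanh^2\sech^2]$ via Cauchy--Schwarz, and combine it with the factor $1/(1-\beta^2\E\sech^2)$.

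If the two-sided bound $|G'|<1$ proves too delicate, the fallback is to show only that $G(q)-q$ changes sign once. Monotonicity of $q\mapsto G(q)-q$ would follow from the upper bound $G'<1$ alone, and that already gives uniqueness of the fixed point of $G$, which is all that is needed.
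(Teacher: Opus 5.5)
Your proof is correct, but it reaches the lemma by a different route from the paper. Your ``fallback'' is in fact the complete argument. Substituting $M'(q)$ gives
\[
G'(q)=\beta^2\,\E\bigl[\sech^2(Y)\bigl(1-3\tanh^2(Y)\bigr)\bigr]-\frac{2\beta^4\bigl(\E[\tanh(Y)\sech^2(Y)]\bigr)^2}{1-\beta^2\,\E\sech^2(Y)}\le\beta^2<1.
\]
The cross term is minus a square, so it is nonpositive whatever the sign of $\E[\tanh\sech^2]$. You therefore need neither the $M(q)\ge0$ detour nor any lower bound on $G'$: $q\mapsto G(q)-q$ is strictly decreasing on $[0,1]$, and joint uniqueness follows.

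The paper never reduces to a scalar map. Instead it proves directly that every solution lies on the diagonal. With $\alpha=\beta^2q^*+t$, $\delta=\beta^2(m^*-q^*)$ and $g=\tanh-\tanh^2$, the Nishimori identity $\E g(\alpha+\sqrt\alpha h)=0$ gives $m^*-q^*=\E g(\alpha+\delta+\sqrt\alpha h)-\E g(\alpha+\sqrt\alpha h)$. A pairing argument then shows $|\partial_r\E g(r+\sqrt\alpha h)|\le1$ for $r\ge0$; this is where $m^*\ge0$ is needed. Together these give $|m^*-q^*|\le\beta^2|m^*-q^*|$.

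That argument moves only the mean, with the variance frozen, and it gives the diagonal structure of every fixed point without invoking existence. However, its uniqueness step (the contraction $\varphi$) is also taken at frozen $\alpha$. So uniqueness of the pair is asserted there rather than derived; one would still need something like $\tfrac{d}{dq}\E\tanh(Y_{q,q})=\beta^2\E\sech^4(Y_{q,q})\le\beta^2$.

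Your approach tracks the $q$-dependence of the variance through Gaussian integration by parts and the implicit map $M$. It therefore proves joint uniqueness outright, and your Step~1 makes existence self-contained. The cost is that $m^*=q^*$ comes out only indirectly, as uniqueness combined with the existence of a diagonal solution.
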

\begin{prf}
Let $h \sim \mathcal N(0,1)$, $\alpha=\beta^2 q^* + t\geq 0$, and $\delta=\beta^2\left(m^*-q^*\right)$, we have $\beta^2 m^* + t=\alpha+\delta$ and $\sqrt{\beta^2 q^* + t}=\sqrt{\alpha}$. Define $g(x):=\tanh (x)-\tanh ^2(x)$. Then, the fixed-point equations can be rewritten as
\[
m^*=\E_{h}[\tanh (\alpha+\delta+\sqrt{\alpha} h)], \quad q^*=\E_{h} \left[\tanh ^2(\alpha+\delta+\sqrt{\alpha} h)\right].
\]

By \cite[Lemma 5.32]{davies2026potential}, we have $\E_{h}\left[g(\alpha+\sqrt{\alpha}\,h)\right]=0$. As a consequence,
\[
m^*-q^*=\E_{h}\left[g(\alpha+\delta+\sqrt{\alpha}\,h)\right]-\E_{h}\left[g(\alpha+\sqrt{\alpha}\,h)\right].
\]

First, we show $m^*$ is unique and $m^*\geq 0$. Define $\varphi(m):=\E_{h}\left[\tanh\left(t+\beta^2m+\sqrt{\alpha}\,h\right)\right]$, the fixed-point equation gives $m^*=\varphi(m^*)$. Moreover,
\[
\varphi^\prime(m)=\beta^2\E_{h}\left[\operatorname{sech}^2\left(t+\beta^2 m+\sqrt{\alpha}\,h\right)\right]\leq \beta^2<1.
\]
Hence $\varphi(m)-m$ is strictly decreasing, which implies $m^*$ is unique. Pairing $h=z$ with $-z$, we have
\[
\tanh(t+\sqrt{\alpha} z)+\tanh(t-\sqrt{\alpha} z) = \frac{\sinh(2t)}{\cosh(t+\sqrt{\alpha} z)\cosh(t \sqrt{\alpha} z)} \geq 0,
\]
then
\[
\varphi(0) - 0 = \E_h\left[\tanh\left(t+\sqrt{\alpha}\,h\right) \right] \geq 0.
\]
Therefore $m^*\ge 0$ and $\alpha+\delta=t+\beta^2 m^*\geq 0$.

Next, we show $\left|\partial_r \E_{h}\left[g(r+\sqrt{\alpha}\,h)\right]\right|\leq 1$. Taking the derivative of $g(x)$, we have
\[
g'(x)=\operatorname{sech}^2 x\,(1-2\tanh x)=(1-\tanh^2 x)(1-2\tanh x) \in (-1, 3).
\]
Then
\[
\partial_r \E_{h}\left[g(r+\sqrt{\alpha}\,h)\right] = \E_{h}\left[g^\prime(r+\sqrt{\alpha}\,h)\right] > -1,
\]
which shows the lower bound. To prove the upper bound, first consider the case $\alpha=0$. Since $r\geq 0$, we immediately have $g'(r)\leq 1$. Now suppose that $\alpha>0$, and let $X\sim\mathcal{N}(r,\alpha)$ with density $p(x)$. For $x\geq 0$, define
\[
\rho(x):=\frac{p(-x)}{p(x)}
=\exp\left(-\frac{2rx}{\alpha}\right)\in(0,1].
\]
Pairing the contributions from $x$ and $-x$, we obtain
\[
\begin{aligned}
\E_{X}\left[g'(X)-1\right]&=\int_0^\infty p(x)\left[\bigl(g'(x)-1\bigr)+\rho(x)\bigl(g'(-x)-1\bigr)\right]\,dx\\
&=\int_0^\infty p(x)\left[(1-\rho(x))\bigl(g'(x)-1\bigr)+\rho(x)\bigl(g'(x)+g'(-x)-2\bigr)\right]\,dx\\
&\leq 0,
\end{aligned}
\]
where the last equality holds since
\[
g'(x)+g'(-x)-2= 2\operatorname{sech}^2(x)-2 = -2\tanh^2(x) \leq 0.
\]

Finally, by the fundamental theorem of calculus,
\[
m^*-q^*=\int_\alpha^{\alpha+\delta} \partial_r \E_{h}\left[g(r+\sqrt{\alpha}\,h)\right] \,dr=\int_\alpha^{\alpha+\delta}\E_{h}\left[g'(r+\sqrt{\alpha}\,h)\right]\,dr.
\]
Since $\alpha \geq 0$ and $\alpha+\delta \geq 0$, the whole integration path lies in $[0,\infty)$. Therefore,
\[
\left|m^*-q^*\right|\leq \int_{\min\{\alpha,\alpha+\delta\}}^{\max\{\alpha,\alpha+\delta\}}\left|\E_{h}\left[g'(r+\sqrt{\alpha}\,h)\right]\right|\,dr \leq |\delta|=\beta^2 \left|m^*-q^*\right|.
\]
This forces $|m^*-q^*|=0$. Then, $q^*$ is also unique solution to the fixed-point equations.
\end{prf}

\section{Annealed bound for the MGF of the squared overlap deviations at \texorpdfstring{$0 < \beta < 1/2$}{0 < beta < 1/2}}\label{sec:bounding-overlap-moments-for-beta-1/2}
In this section, we prove upper bounds for the even moments and the exponential moment of the squared overlap deviation when $0<\beta < 1/2$. The analysis here will begin by writing the quenched MGF for the squared overlap deviation in terms of explicit (random) rate functions, and then proving a list of analytic desiderata about their deterministic counterparts. Once this is obtained, the key analytic estimate of the section is conducted in \pref{thm:whp-overlap-concentration-laplace-cwrf} which is a rigorous (random) Laplace method. This is eventually boosted to an annealed estimate on the MGF of the squared overlap deviations in \pref{thm:uniform-exponential-moment-bound-for-0-beta-1/2}.

\subsection{Explicit computation of the rate function via the Hubbard--Stratanovitch transform}\label{sec:hs-rate} We are interested in applying a Hubbard-Stratonovich transform to the exponential moment of the squared overlap deviation, so as to transform it into a three-dimensional integral over some rate-like function $F(x,y,w)$ over a pair of magnetizations $(x,y)$ and overlap-deviation $w$. Then we do a change-of-basis $(x, y) \to (u, v)$ that transforms $F(x,y,w) \to \bar F(u,v,w)$ so that $\bar F$ can be approximated via a Laplace method around its critical point $\td c := (\beta m^*,0 , 0)$.
        
\begin{lemma}[Nested Hubbard-Stratonovich transform for exponential moment of the squared overlap deviation]\label{lem:hs-variance-mgf}
    For any $\lambda, \beta >0$, and realization $\mathbf h = \mathbf h^t(\omega)$,
    \[
        \an{e^{\lambda n\left(R_{\sigma,\tau} - q^*\right)^2}}_\mathbf h = \sqrt{\frac{\lambda n}{\pi}}\frac{\int_{(x,y,w)\in\R^3}e^{F(\mathbf h; x,y,w)n}dx\,dy\,dw}{\int_{(x,y)\in\R^2}e^{F(\mathbf h; x,y,0)n}dx\,dy}\, ,
    \]
    where $F: \R^3 \to \R$ is given as
    \begin{equation}\label{eq:empirical-averages-function}
    \begin{aligned}
        F(\mathbf h; x,y,w) &:= -\frac{1}{2}\left(x^2 + y^2 + 2\lambda w^2\right) - 2\lambda w q^* + \\
        &\qquad \qquad \frac{1}{n}\sum_{i=1}^n \ln\left(e^{2\lambda w}\cosh\left(\beta x + \beta y + 2h_i \right) + e^{-2\lambda w}\cosh\left(\beta x - \beta y\right)\right)\,.
    \end{aligned}
    \end{equation}
\end{lemma}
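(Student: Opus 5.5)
The plan is a direct computation: linearize every quadratic term in the exponent with a Gaussian (Hubbard--Stratonovich) integral, then sum out the spins site by site. Two one-dimensional identities do all the work. For $S\in\R$,
\[
e^{\frac{\beta^2}{2n}S^2}=\sqrt{\tfrac{n}{2\pi}}\int_{\R}e^{-\frac n2 x^2+\beta x S}\,dx,
\]
and, for $z\in\R$ and $\lambda>0$,
\[
e^{\lambda n z^2}=\sqrt{\tfrac{\lambda n}{\pi}}\int_{\R}e^{-\lambda n w^2+2\lambda n w z}\,dw.
\]
Both follow by completing the square; for the second, $-\lambda n(w-z)^2+\lambda n z^2$ integrates to $\sqrt{\pi/(\lambda n)}\,e^{\lambda n z^2}$.

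First, I write the Gibbs average as a ratio:
\[
\an{e^{\lambda n(R_{\sigma,\tau}-q^*)^2}}_{\mathbf h}=\frac{N}{Z_n(\mathbf h)^2},
\qquad
N:=\sum_{\sigma,\tau}\exp\Big\{\tfrac{\beta^2}{2n}\big(\textstyle\sum_i\sigma_i\big)^2+\tfrac{\beta^2}{2n}\big(\sum_i\tau_i\big)^2+\sum_i h_i(\sigma_i+\tau_i)+\lambda n(R_{\sigma,\tau}-q^*)^2\Big\}.
\]
In $N$ I apply the first identity with $S=\sum_i\sigma_i$ (variable $x$) and with $S=\sum_i\tau_i$ (variable $y$). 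I apply the second identity with $z=R_{\sigma,\tau}-q^*$ (variable $w$), which contributes the exponent $-\lambda n w^2+2\lambda w\sum_i\sigma_i\tau_i-2\lambda n w q^*$.

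Next, I exchange the finite sum over $(\sigma,\tau)$ with the integral. Every integrand is positive, so Tonelli's theorem justifies this. The spin sum then factorizes over sites, and I evaluate each single-site factor using the identities $\cosh(a+b)=\cosh a\cosh b+\sinh a\sinh b$ and $\cosh(a-b)=\cosh a\cosh b-\sinh a\sinh b$:
\[
\sum_{\sigma_i,\tau_i=\pm1}e^{(\beta x+h_i)\sigma_i+(\beta y+h_i)\tau_i+2\lambda w\sigma_i\tau_i}
=2\Big(e^{2\lambda w}\cosh(\beta x+\beta y+2h_i)+e^{-2\lambda w}\cosh(\beta x-\beta y)\Big).
\]
Collecting exponents gives
\[
N=\frac{n}{2\pi}\sqrt{\frac{\lambda n}{\pi}}\,2^n\int_{\R^3}e^{nF(\mathbf h;x,y,w)}\,dx\,dy\,dw,
\]
with $F$ exactly as in \eqref{eq:empirical-averages-function}. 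In particular, the quadratic part $-\tfrac12(x^2+y^2+2\lambda w^2)$ accounts for the $-\lambda w^2$ term coming from the second identity.

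The same two $x,y$ transforms applied to $Z_n(\mathbf h)^2$, with no overlap term, give
\[
Z_n(\mathbf h)^2=\frac{n}{2\pi}\,2^n\int_{\R^2}e^{nF(\mathbf h;x,y,0)}\,dx\,dy.
\]
This uses $\cosh(a+b)+\cosh(a-b)=2\cosh a\cosh b$, so that each site contributes $(2\cosh(\beta x+h_i))(2\cosh(\beta y+h_i))$; the formula for $Z_n(\mathbf h)^2$ agrees with $F(\mathbf h;x,y,0)$ once the common factor $2^n$ is pulled out. Taking the ratio, the factors $\tfrac{n}{2\pi}$ and $2^n$ cancel and leave the claimed prefactor $\sqrt{\lambda n/\pi}$.

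There is no real obstacle here. The only points that need care are the bookkeeping of the factors $2$ and $2\lambda$, and checking that the constants produced by the numerator and the denominator match so that they cancel exactly.
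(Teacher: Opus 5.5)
Your proposal is correct and follows the paper's proof essentially step for step. Both arguments apply three Hubbard--Stratonovich linearizations: two for the replica magnetizations, and one for $(R_{\sigma,\tau}-q^*)^2$, which produces the $-2\lambda n w q^*$ term. Both then factor the spin sum site by site into $2\bigl(e^{2\lambda w}\cosh(\beta x+\beta y+2h_i)+e^{-2\lambda w}\cosh(\beta x-\beta y)\bigr)$ and treat $Z_n(\mathbf h)^2$ the same way via $\cosh(a+b)+\cosh(a-b)=2\cosh a\cosh b$, so that $\frac{n}{2\pi}$ and $2^n$ cancel and leave $\sqrt{\lambda n/\pi}$.
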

\begin{prf}
    The proof follows by three applications of the Hubbard-Stratonovich transform. The technical steps to obtain the rate function $F(\mathbf h; x,y,w)$ are similar to those of~\cite[\S3]{kabluchko2024propagation}.
    \allowdisplaybreaks
    \begin{align*}
        &\an{e^{\lambda n\left(R_{\sigma,\tau} - q^*\right)^2}}_\mathbf h = \frac{1}{Z_{n, \mathbf h}^2}\sum_{\sigma,\tau}e^{\frac{\beta^2}{2n}(\sum_i \sigma_i)^2 + \sum_{i=1}^n h_i\sigma_i}e^{\frac{\beta^2}{2n}(\sum_i\tau_i)^2 + \sum_{i=1}^n h_i\tau_i}e^{\lambda n \left(R_{\sigma,\tau} - q^*\right)^2} \\
        &= \frac{1}{Z_{n, \mathbf h}^2}\frac{n}{2\pi}\sum_{\sigma,\tau}\left( \int_x e^{-x^2n/2 + \beta x\sum_i\sigma_i + \sum_i h_i\sigma_i}\,dx\right)\left(\int_y e^{-y^2n/2 + \beta y\sum_i\tau_i + \sum_i h_i\tau_i}\,dy\right)e^{\lambda n\left(R_{\sigma,\tau} - q^*\right)^2} \\
        &= \frac{1}{Z_{n, \mathbf h}^2}\left\{\sqrt{\frac{\lambda n^{3}}{4\pi^3}}\sum_{\sigma,\tau}\left(\int_{x,y,w}e^{\left(-x^2/2 -y^2/2-\lambda w^2\right)n}e^{-2\lambda q^*n w}e^{\sum_{i=1}^n\left(h_i(\sigma_i + \tau_i) + \beta (x\sigma_i + y\tau_i) + 2\lambda \sigma_i\tau_i w\right)}\right)dx\,dy\,dw\right\} \\
        &= \int_{\R^3}\frac{1}{Z_{n, \mathbf h}^2}\left\{\sqrt{\frac{\lambda n^3}{4\pi^3}}e^{\left(-(x^2 + y^2 + 2\lambda w^2)/2 - 2\lambda q^* w\right)n}\prod_{i=1}^n\left(\sum_{\sigma_i,\tau_i\in\{-1,1\}^2}e^{ h_i(\sigma_i + \tau_i) + \beta (x\sigma_i + y\tau_i) + 2\lambda \sigma_i\tau_i w}\right)dx\,dy\,dw\right\} \\
        &= \int_{\R^3}\frac{1}{Z_{n, \mathbf h}^2}\Bigg\{\sqrt{\frac{\lambda n^3}{4\pi^3}}e^{\left(-(x^2 + y^2 + 2\lambda w^2)/2 - 2\lambda q^* w\right)n} \\
        &\qquad\qquad\qquad\quad \prod_{i=1}^n\left(e^{2\lambda w}\left(e^{2 h_i + \beta(x+y)}+e^{-2 h^t_i -\beta(x+y)}\right) + e^{-2\lambda w}\left(e^{\beta(x-y)} 
        + e^{-\beta(x-y)}\right)\right)dx\,dy\,dw\Bigg\} \\
        &= \int_{\R^3}\frac{1}{Z_{n, \mathbf h}^2}\Bigg\{\sqrt{\frac{\lambda n^3}{4\pi^3}}e^{\left(-(x^2 + y^2 + 2\lambda w^2)/2-2\lambda q^* w\right)n} \\
        &\qquad\qquad\qquad\quad\prod_{i=1}^n2\left(e^{2\lambda w}\cosh\left(\beta x + \beta y + 2 h_i\right) + e^{-2\lambda w}\cosh\left(\beta x - \beta y\right)\right)dx\,dy\,dw\Bigg\} \\
        &= \int_{\R^3}\frac{2^n}{Z_{n, \mathbf h}^2}\left\{\sqrt{\frac{\lambda n^3}{4\pi^3}}e^{\left(-(x^2 + y^2 + 2\lambda w^2)/2-2\lambda q^* w\right)n}e^{\left(\frac{1}{n}\sum_i\ln\left(e^{2\lambda w}\cosh\left(\beta x + \beta y +2 h_i \right) + e^{-2\lambda w}\cosh\left(\beta x - \beta y\right)\right)\right)n}dx\,dy\,dw\right\} \\
        &= \frac{2^n}{Z_{n, \mathbf h}^2}\sqrt{\frac{\lambda n^3}{4\pi^3}}\int_{\R^3}e^{F(\mathbf h; x,y,w)n}dx\,dy\,dw\,.
    \end{align*}
    A similar Hubbard-Stratonovich transformation on the two magnetization variables yields
    \[
        Z_{n, \mathbf h}^2 = 2^n\frac{n}{2\pi} \int_{\R^2}e^{\left(-(x^2 + y^2)/2 + \frac{1}{n}\sum_i \ln\left(\cosh(\beta x + \beta y +2 h_i)+ \cosh(\beta x - \beta y)\right)\right)n}dx\,dy = 2^n\frac{n}{2\pi}\int_{\R^2}e^{F(\mathbf h; x,y,0)n}dx\,dy\, ,
    \]
    since $\cosh(x+y) + \cosh(x-y) = 2\cosh(x)\cosh(y)$. This immediately implies that
    \[
        \an{e^{\lambda n (R_{\sigma,\tau}-q^*)^2}}_\mathbf h = \sqrt{\frac{\lambda n}{\pi}}\frac{\int_{\R^3}e^{F(\mathbf h; x,y,w)n}dx\,dy\,dw}{\int_{\R^2}e^{F(\mathbf h; x,y,0)n}dx\,dy}\,. \qedhere
    \]
\end{prf}

\subsection{Analysis of the deterministic rate function}\label{subsec:hubbard-Stratonovich-transform-to-MGF-of-overlaps}

\begin{definition}[Deterministic rate function]\label{def:deterministic-rate-function}
    Define $\hat{F} : \R^3 \to \R$ for any $\lambda, \beta >0$ as
    \begin{align*}
        \hat{F}(x,y,w) &= -\frac{1}{2}\left(x^2 + y^2 + 2\lambda w^2\right) - 2\lambda q^* w \\
        &\qquad +\E_{h^t}\left[\ln\left(e^{2\lambda w}\cosh(\beta x+\beta y + 2h^t) + e^{-2\lambda w}\cosh(\beta x -\beta y)\right)\right].
    \end{align*}
\end{definition}

This naturally induces the ``deviation'' function, which measures the difference between empirical averages and the true expectation.
\begin{definition}[Deviation function]
\label{def:laplace-deviation-function}
    Let the function $\Delta: \R^3 \to \R$ measure the deviation between the ``ideal'' rate function $\hat{F}(x,y,w)$ and empirical rate function $F(\mathbf h; x,y,w)$, and be defined as
    \begin{align*}
        \Delta(\mathbf h; x,y,w) &= \frac{1}{n}\sum_{i=1}^n \ln\left(e^{2\lambda w}\cosh\left(\beta x + \beta y + 2h_i\right) + e^{-2\lambda w}\cosh\left(\beta x -\beta y\right)\right) \\
        &\qquad - \E_{h^t}\left[\ln\left(e^{2\lambda w}\cosh(\beta x + \beta y + 2h^t) + e^{-2\lambda w}\cosh(\beta x -\beta y)\right)\right]\,.
    \end{align*}
\end{definition}

For convenience, we do a change-of-basis $(x,y) \to (u, v) := ((x+y)/2,(x-y)/2)$ that transforms the function $F(\mathbf h; x,y,w) \to \bar{F}(\mathbf h; u,v,w)$ and $\hat{F}(x,y,w) \to \td{F}(u,v,w)$ so that the rate function decouples the $\cosh(\cdot)$ terms to be functions of two separate, independent variables. Immediately, we have the following definitions:

\begin{definition}[Empirical averages and Deterministic rate function in $(u, v,w)$ basis]
\label{def:empirical-averages-deterministic-rate-function-u,v,w}
    For any $\lambda, \beta >0$, define $\bar F: \R^3 \to \R$ as 
    \[
    \bar F(\mathbf h; u,v,w) = -\left(u^2 + v^2 + \lambda w^2\right) - 2\lambda q^* w + \frac{1}{n} \sum_{i=1}^n \ln\left(e^{2\lambda w}\cosh\left(2\beta u  + 2 h_i\right) + e^{-2\lambda w}\cosh\left(2\beta v\right)\right)\,,
    \]
    and $\td F: \R^3 \to \R$ as
    \[
    \td F(u,v,w) = -\left(u^2 + v^2 + \lambda w^2\right) - 2\lambda q^* w + \E_{h^t} \left[ \ln\left(e^{2\lambda w}\cosh\left(2\beta u  + 2 h^t\right) + e^{-2\lambda w}\cosh\left(2\beta v\right)\right)\right]\,.
    \]
\end{definition}

Similarly, we define the deviation function in $(u, v,w)$ basis:

\begin{definition}[Deviation function in $(u,v,w)$ basis]
\label{def:laplace-deviation-function-u,v,w}
    Let the function $\td \Delta: \R^3 \to \R$ measure the deviation between the ``ideal'' rate function $\td F(u,v,w)$ and empirical rate function $\bar F(\mathbf h; u,v,w)$, and be defined as
    \begin{align*}
        \td \Delta(\mathbf h; u,v,w) &= \frac{1}{n}\sum_{i=1}^n \ln\left(e^{2\lambda w}\cosh\left(2\beta u + 2h_i\right) + e^{-2\lambda w}\cosh\left(2 \beta v\right)\right) \\
        &\qquad \qquad  - \E_{h^t}\left[\ln\left(e^{2\lambda w}\cosh(2 \beta u + 2h^t) + e^{-2\lambda w}\cosh(2 \beta v)\right)\right]\,.
    \end{align*}
\end{definition}

\paragraph{A critical point of $\td{F}(u,v,w)$} We first prove that $\td c$ is a critical point for the function $\td{F}(u,v,w)$.

\begin{proposition}[$\td c$ is a critical point for $\td{F}(u,v,w)$]\label{prop:stationary-point-rate-function-variance-mgf}
Let $\td F: \R^3 \to \R$ be as defined in~\pref{def:empirical-averages-deterministic-rate-function-u,v,w} and $(m^*,q^*)$ be as defined in~\pref{eq:fixpoint-equations}. Then, for every $\lambda, \beta >0$,
\[
    \nabla \td F(\td c)=(0, 0, 0).
\]
\end{proposition}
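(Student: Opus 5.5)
The plan is to compute the three partial derivatives of $\td F$ explicitly and evaluate them at $\td c=(\beta m^*,0,0)$. At that point each one collapses to one of the fixed-point equations \eqref{eq:fixpoint-equations}, or vanishes by symmetry. Write
\[
D(h;u,v,w):=e^{2\lambda w}\cosh(2\beta u+2h)+e^{-2\lambda w}\cosh(2\beta v),
\]
so that $\td F(u,v,w)=-(u^2+v^2+\lambda w^2)-2\lambda q^*w+\E_{h^t}[\ln D(h^t;u,v,w)]$.

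First I justify differentiating under $\E_{h^t}$. The partial derivatives of $\ln D$ are
\[
\frac{2\beta e^{2\lambda w}\sinh(2\beta u+2h)}{D},\qquad \frac{2\beta e^{-2\lambda w}\sinh(2\beta v)}{D},\qquad \frac{2\lambda\left(e^{2\lambda w}\cosh(2\beta u+2h)-e^{-2\lambda w}\cosh(2\beta v)\right)}{D}.
\]
Since $|\sinh|\le\cosh$ and both terms of $D$ are positive, these are bounded in absolute value by $2\beta$, $2\beta$ and $2\lambda$ respectively, uniformly in $(h,u,v,w)$. Dominated convergence (with the mean value theorem) therefore allows interchanging $\partial$ and $\E_{h^t}$. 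Also $\ln D$ is integrable, since $|\ln D|$ grows at most linearly in $|h|$.

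Next I evaluate each derivative at $v=w=0$, where $D=\cosh(2\beta u+2h)+1=2\cosh^2(\beta u+h)$.
\begin{itemize}
\item The $v$-derivative of $\td F$ is $-2v+\E[2\beta\sinh(2\beta v)/D]$, which vanishes at $v=0$ because $\sinh 0=0$.
\item For the $u$-derivative, the half-angle identity $\sinh(2a)/(2\cosh^2 a)=\tanh a$ gives
\[
\partial_u\td F(u,0,0)=-2u+2\beta\,\E_{h^t}[\tanh(\beta u+h^t)].
\]
At $u=\beta m^*$, write $h^t=t+\sqrt{\beta^2q^*+t}\,h$ with $h\sim\calN(0,1)$. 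The first fixed-point equation then gives $\E_{h^t}[\tanh(\beta^2m^*+h^t)]=m^*$, so $\partial_u\td F(\td c)=-2\beta m^*+2\beta m^*=0$.
\item For the $w$-derivative, the identity $(\cosh 2a-1)/(\cosh 2a+1)=\tanh^2 a$ gives
\[
\partial_w\td F(u,0,0)=-2\lambda q^*+2\lambda\,\E_{h^t}[\tanh^2(\beta u+h^t)].
\]
At $u=\beta m^*$, the second fixed-point equation gives $\E_{h^t}[\tanh^2(\beta^2 m^*+h^t)]=q^*$, so this derivative is $0$.
\end{itemize}

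This argument uses only \eqref{eq:fixpoint-equations} and holds for every $\lambda,\beta>0$. It does not need $m^*=q^*$ from \pref{lem:extend-nishimori-condition-beta-1}, nor uniqueness of the fixed point.

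There is no real obstacle. The only points needing care are the hyperbolic half-angle simplifications at $v=w=0$ and the justification for interchanging differentiation and expectation, which the uniform bounds above handle.
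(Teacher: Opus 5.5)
Your proposal is correct and follows the paper's proof essentially step for step. It uses the same uniform bounds $2\beta$, $2\beta$, $2\lambda$ on the partials of the log term, which justify dominated convergence. At $v=w=0$, the same hyperbolic half-angle identities turn $\partial_u\td F(\td c)$ and $\partial_w\td F(\td c)$ into the two fixed-point equations \eqref{eq:fixpoint-equations}, and $\partial_v\td F(\td c)=0$ because $\sinh 0=0$.
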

\begin{prf}
    Define
    \begin{equation}\label{eq:function-of-u,v,w,h^t}
    A(h^t; u, v, w) = e^{2\lambda w} \cosh(2\beta u +2h^t)+e^{-2\lambda w} \cosh(2\beta v).
    \end{equation}
    Since $|\sinh(\cdot)|\leq \cosh(\cdot)$,
    \[
    \begin{aligned}
        &\left|\partial_u \ln A(h^t; u, v, w)\right| = \frac{2 \beta e^{2 \lambda w} |\sinh (2 \beta u+2 h^t)|}{e^{2\lambda w} \cosh(2\beta u +2h^t)+e^{-2\lambda w} \cosh(2\beta v)} \leq 2\beta , \\
        & \left|\partial_v \ln A(h^t; u, v, w)\right| = \frac{2 \beta e^{-2 \lambda w} |\sinh (2 \beta v)|}{e^{2\lambda w} \cosh(2\beta u +2h^t)+e^{-2\lambda w} \cosh(2\beta v)} \leq 2\beta , \\
        & \left|\partial_w \ln A(h^t; u, v, w)\right| =  \frac{2 \lambda \left|e^{2 \lambda w} \cosh (2 \beta u+2 h^t)-e^{-2 \lambda w} \cosh (2 \beta v)\right|}{e^{2\lambda w} \cosh(2\beta u +2h^t)+e^{-2\lambda w} \cosh(2\beta v)} \leq 2 \lambda .
    \end{aligned}
    \]
    By dominated convergence theorem, 
    \begin{align*}
        \partial_u \td{F}(\td c)&= 2\beta\left(-m^* + \E_{h^t} \left[ \frac{\sinh(2\beta^2 m^* +2h^t)}{\cosh(2\beta^2 m^* +2h^t)+1}\right]\right) =2\beta\left(-m^* + \E_{h^t} \left[\tanh\left(\beta^2 m^*+h^t\right)\right]\right)\,, \\
        \partial_v \td{F}(\td c) &= 2\beta\left(-0 + \E_{h^t} \left[\frac{\sinh(0)}{\cosh(2\beta^2 m^* +2h^t)+1}\right]\right) = 0\,, \\
        \partial_w \td{F}(\td c) &= -2\lambda q^* + 2\lambda\E_{h^t} \left[\frac{\cosh(2\beta^2 m^* +2h^t)-1}{\cosh(2\beta^2 m^* +2h^t)+1}\right] = 2\lambda\left(-q^* + \E_{h^t} \left[\tanh^2\left(\beta^2 m^* + h^t\right)\right]\right)\,.
    \end{align*}
    Since $m^*$ and $q^*$ are chosen to satisfy the fixed-point equations defined in~\pref{eq:fixpoint-equations}, the claim follows.
\end{prf}

A similar proof shows that $\hat c := (\beta m^*, \beta m^*, 0)$ is a critical point of $\hat F$.

\paragraph{Uniqueness of $\td c$ as global maximizer for $(\beta,\lambda) \in (0,1/2)\times(0,7/16)$} We begin by establishing the fact $\td c$ is a global maximizer for $\td F$. To do this we establish coercivity, and show that the function diverges to $-\infty$ as $(u, v, w) \to \infty$, which implies the existence of global maximizer. After this, we show the global maximizer is restricted to lie in a $2$-dimensional subspace. We conclude by establishing strict convexity in that subspace using perturbative arguments. We begin with analyzing the limit behavior of $\bar F$ and $\td F$.

\begin{proposition}[Behavior of $\bar{F}(\mathbf h; u,v,w)$ and $\td F(u, v,w)$ near infinite limits $\implies$ global maximizers exist]\label{prop:f-infinite-limits}
    $\bar F$ and $\td{F}$ are defined in~\pref{def:empirical-averages-deterministic-rate-function-u,v,w}. Then, for any $\beta, \lambda>0$,
    \[
        \lim_{\norm{(u,v,w)}_2 \to \infty} \bar F(\mathbf h; u,v,w) \overset{a.s.}{=} -\infty\,,\quad\text{and}\,,\quad  \lim_{\norm{(u,v,w)}_2 \to \infty} \td{F}(u,v,w) = -\infty\,,
    \]
    and so for almost every $\mathbf h(\omega)$, $\bar F$ and $\td F$ both have (at least one) global maximizer in $\R^3$.
\end{proposition}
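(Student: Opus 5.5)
The plan is to sandwich the logarithmic term between explicit bounds that are linear in $(|u|,|v|,|w|)$ and in $|h_i|$. Once that is done, the quadratic term $-(u^2+v^2+\lambda w^2)$ dominates, because $\lambda>0$.

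\emph{Upper bound on the log term.} For every $a,b\in\R$ we have $\cosh(a)\le e^{|a|}$ and $e^{\pm 2\lambda w}\le e^{2\lambda|w|}$. Hence
\[
\ln\left(e^{2\lambda w}\cosh(2\beta u+2h_i)+e^{-2\lambda w}\cosh(2\beta v)\right)\le \ln 2+2\lambda|w|+2\beta|u|+2\beta|v|+2|h_i|.
\]
Averaging over $i$ gives
\[
\bar F(\mathbf h;u,v,w)\le -(u^2+v^2+\lambda w^2)+2\lambda(q^*+1)|w|+2\beta(|u|+|v|)+\ln 2+\frac{2}{n}\sum_{i=1}^n|h_i|.
\]
For a fixed realization $\mathbf h$ the last term is a finite constant $C_{\mathbf h}$. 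The same bound holds for $\td F$, with $C_{\mathbf h}$ replaced by $2\E|h^t|<\infty$, since $h^t$ is Gaussian.

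\emph{Divergence to $-\infty$.} The right-hand side has the form $-\min(1,\lambda)\norm{(u,v,w)}_2^2+C'\norm{(u,v,w)}_2+C''$, where we use $|u|+|v|+|w|\le\sqrt3\,\norm{(u,v,w)}_2$. This tends to $-\infty$ as $\norm{(u,v,w)}_2\to\infty$. The divergence is therefore deterministic for every $\mathbf h$; the ``a.s.'' in the statement is only needed to ensure $\mathbf h$ has finite entries, which holds surely for Gaussian draws.

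\emph{Existence of maximizers.} First check that both functions are finite and continuous. For $\bar F$ this is clear, since the argument of the logarithm is a positive sum of continuous functions. For $\td F$ we need a matching lower bound on the log term to justify dominated convergence: the argument is at least $e^{-2\lambda w}\cosh(2\beta v)\ge e^{-2\lambda|w|}$, so the log term is at least $-2\lambda|w|$. Combined with the upper bound, this gives an integrable dominating function locally uniformly in $(u,v,w)$, so $\td F$ is continuous.

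Coercivity then finishes the argument. Pick $R$ such that $F<F(0,0,0)$ outside the ball $B_R$. On the compact set $\bar B_R$ the continuous function attains its maximum, and by the choice of $R$ this is a global maximizer on $\R^3$.

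There is no real obstacle here. The only point that needs care is the integrability and continuity of $\td F$, which the two-sided linear bounds on the logarithm settle.
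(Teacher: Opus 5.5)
Your proof is correct and follows essentially the same route as the paper: a bound on the logarithmic term that is linear in $|u|,|v|,|w|,|h_i|$, domination by the quadratic part, the extreme value theorem on a large closed ball, and dominated convergence for continuity of $\td F$. Your lower bound on the logarithm can even be sharpened to $\ln 2$, since $\cosh\ge 1$ makes the argument at least $2\cosh(2\lambda w)\ge 2$.

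The only difference is that you rightly note that, for fixed $n$, $\frac{2}{n}\sum_i|h_i|$ is just a finite constant, so the divergence holds for every realization. The paper instead invokes the strong law of large numbers to get a bound uniform in $n$. This statement does not need that uniformity, but the paper reuses it in part $\mathsf{III}$ of the Laplace approximation.
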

\begin{prf}
    By using the fact that $\ln(e^{x}a + e^{-x}b) \le |x| + \ln(a + b)$ for any $x \in \R$ and $a, b \ge 0$, observe that
    \begin{align*}
        \ln\left(e^{2\lambda w}\cosh(2\beta u+ 2h_i) + e^{-2\lambda w}\cosh(2\beta v\right)
        &\leq 2\lambda |w| + \ln\left(\cosh\left(2\beta u+ 2h_i\right) + \cosh\left(2\beta v\right)\right) \\
        &= 2\lambda |w| + \ln2 + \ln\left(\cosh\left(\beta u+ \beta v + h_i)\right)\right) \\
        &\qquad \qquad + \ln\left(\cosh\left(\beta u - \beta v + h_i\right)\right)\,.
    \end{align*}
    This immediately implies that,
    \[
        \bar F(\mathbf h; u,v,w) \leq - \left(u^2 + v^2 +\lambda w^2\right) -2\lambda q^* w +2\lambda |w| + 2\beta |u| +2\beta |v| + \ln2 + \frac{2}{n}\sum_{i=1}^n |h_i| ,   
    \]
    where $\ln(\cosh(\cdot)) \le |\cdot|$ and triangle inequality are used. Let $\Phi$ be cumulative distribution function of $\calN(0,1)$. Since $h_i^t \stackrel{\mathrm{i.i.d.}}{\sim} \mathcal N\left(t,\beta^2 q^* + t\right)$, as $\beta^2 q^* + t\neq 0$,
    \[
    \E_{\mathbf h^t}|h_1^t| = \sqrt{\beta^2 q^* + t} \sqrt{\frac{2}{\pi}} \exp{\left(-\frac{t^2}{2(\beta^2 q^* + t)}\right)} + |t|\left(2\Phi\left(\frac{|t|}{\sqrt{\beta^2 q^* + t}}\right)-1\right) <\infty.
    \]
    By Kolmogorov's strong law-of-large numbers, 
    \[
    \frac{1}{n} \sum_{i=1}^n |h^t_i| \longrightarrow \E_{\mathbf h^t} |h_1^t| < \infty \quad \text{ almost surely.}
    \]
    Equivalently, for almost every realization $\mathbf h(\omega)$, we have $\sum_{i=1}^n|h_i|/n\to \mathbb E_{\mathbf h^t}|h_1^t|$. Then there exists $n_1(\omega)$, such that as $n>n_1(\omega)$, $\sum_{i=1}^n |h_i|/n \leq \E_{\mathbf h^t}|h_1^t| + 1$. Hence for any $n>0$ and almost every $\omega$, 
    \[
    \frac{1}{n} \sum_{i=1}^n |h_i| \leq \max{\left(|h_1|, \frac{1}{2}(|h_1|+|h_2|), \dots, \frac{1}{n_1(\omega)} \sum_{i=1}^{n_1(\omega)} |h_i|, \E_{\mathbf h^t} |h_1^t| + 1\right)} = O(1),
    \]
    this implies
    \[
        \bar F(\mathbf h; u,v,w) \le -\left(u^2 + v^2 + \lambda w^2\right) + 2\lambda(1+q^*)|w|+2\beta(|u|+|v|)+ O(1)\,.
    \]
    In the degenerate case, both $t$ and $q^*$ are zero. In this instance, we still have $\sum_{i=1}^n |h_i|/n = 0 = O(1)$.
    
    By Young's inequality, $\beta|u| \leq u^2/4 + \beta^2$, $\beta|v| \leq v^2/4 +\beta^2$, and $2\lambda(1+q^*)|w|\leq \lambda w^2/2 + 2\lambda(1+q^*)^2$, then for almost every realization $\mathbf h(\omega)$,
    \[
    \bar F(\mathbf h; u, v, w) \leq -\frac{1}{2}u^2 - \frac{1}{2}v^2 -\frac{\lambda}{2}w^2 + O(1) \to -\infty .
    \]
    Since $\bar F(\mathbf h; u, v, w) \to -\infty$, $\exists R_0>0$, such that when $\norm{(u, v, w)}_2 \geq R_0$, $\bar F(\mathbf h; u, v, w) < \bar F(\mathbf h; 0, 0, 0) - 1$. Therefore, the global maximizer for $\bar F$ lies in the closed ball $\bar{B_{R_0}(0,0,0)}$ centered at $(0,0,0)$ with radius $R_0$. Since $\bar F$ is continuous and $\bar{B_{R_0}(0,0,0)}$ is compact set, the existence of a global maximizer follows straightforwardly by the extreme value theorem. 
    
    The same argument, without appeal to the strong law-of-large numbers, applies to $\td{F}(u,v,w)$ as
    \begin{align*}
    \left|\E_{h^t}\left[\ln\left(e^{2\lambda w}\cosh(2\beta u + 2h^t) + e^{-2\lambda w}\cosh(2\beta v)\right)\right]\right| & \le 2\beta(|u|+|v|) +  2\lambda|w| + 2\E_{h^t}|h^t| + \ln 2 \\
    &\leq 2\beta\left(|u| + |v|\right) + 2\lambda|w| + O(1)\,.
    \end{align*}
    The proof is analogous to that for $\bar F$; continuity of $\td{F}$ follows by dominated convergence theorem.
\end{prf}

The goal now is to show that, for any choice of $(u,w)$ the map $v \to \td{F}(u,v,w)$ is \emph{strictly} concave in $v$. Since $\td{F}(u,v,w)$ is even in $v$, this forces $v=0$ at any maximizer.

\begin{lemma}[Strict concavity of $\td{F}(u,v,w)$ in $v$ for any $(u,w)$ with $0 < \beta < 1/2$]\label{lem:strict-concavity-f-transformed-basis}
   The function $\td{F}(u,v,w)$ defined in~\pref{def:empirical-averages-deterministic-rate-function-u,v,w} is \emph{even} and \emph{strictly concave} in $v$ for $0 < \beta < 1/2$ and any $\lambda >0$.
\end{lemma}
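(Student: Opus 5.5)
Evenness in $v$ is immediate. In \pref{def:empirical-averages-deterministic-rate-function-u,v,w}, $v$ enters only through $-v^2$ and through $\cosh(2\beta v)$, and both are even. So $\td F(u,-v,w)=\td F(u,v,w)$ for every $(u,w)$.

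For strict concavity, I fix $(u,w)$ and bound $\partial_v^2 \td F$ uniformly from above by a negative constant. Using $A(h^t;u,v,w)$ from \eqref{eq:function-of-u,v,w,h^t}, write
\[
a:=e^{2\lambda w}\cosh(2\beta u+2h^t),\qquad b:=e^{-2\lambda w}\cosh(2\beta v),
\]
so that $A=a+b$ with $a>0$ and $b>0$. Then
\[
\partial_v b = 2\beta e^{-2\lambda w}\sinh(2\beta v),\qquad \partial_v^2 b = 4\beta^2 b.
\]
Hence
\[
\partial_v^2 \ln A = \frac{4\beta^2 b}{A} - \left(\frac{\partial_v b}{A}\right)^2 \le 4\beta^2\frac{b}{a+b}\le 4\beta^2 .
\]
I also need a bound on the magnitude. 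Since $|\sinh|\le\cosh$, we have $|\partial_v b|\le 2\beta b$. Therefore $|\partial_v \ln A|\le 2\beta$ (as already noted in \pref{prop:stationary-point-rate-function-variance-mgf}), and $|\partial_v^2\ln A|\le 8\beta^2$, uniformly in $(h^t,u,v,w)$.

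These uniform bounds let me differentiate under $\E_{h^t}$ twice by dominated convergence, exactly as in \pref{prop:stationary-point-rate-function-variance-mgf}. This gives
\[
\partial_v^2 \td F(u,v,w) = -2 + \E_{h^t}\left[\partial_v^2\ln A(h^t;u,v,w)\right] \le -2+4\beta^2 .
\]
For $0<\beta<1/2$, the right-hand side is at most $-1<0$, uniformly in $(u,v,w)$ and for every $\lambda>0$. A $C^2$ function of one variable with second derivative bounded above by a negative constant is strictly (indeed strongly) concave, so $v\mapsto\td F(u,v,w)$ is strictly concave. The bound in fact only needs $\beta<1/\sqrt2$; the hypothesis $\beta<1/2$ leaves slack.

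The only step needing care is justifying the interchange of derivative and expectation. The integrand bounds above are uniform in $h^t$, so this is routine, and I do not expect a genuine obstacle. As a consequence, combining evenness with strict concavity shows that any maximizer of $v\mapsto\td F(u,v,w)$ lies at $v=0$.
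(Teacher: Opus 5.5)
Your proof is correct and follows essentially the same route as the paper: you compute $\partial_v^2 \td F$ by differentiating under $\E_{h^t}$ and bound it uniformly above by a negative constant. The one difference is that you drop the nonpositive term $-(\partial_v b/A)^2$ and use $b/(a+b)\le 1$, which gives the sharper bound $-2+4\beta^2$; the paper instead rewrites via $\sinh^2=\cosh^2-1$, bounds the two resulting terms by $1/4$ and $1$, and obtains $5\beta^2-2$. Your threshold $\beta<1/\sqrt2$ is consistent with the paper's later remark that curvature in $v$ can become positive once $\beta>1/\sqrt2$.
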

\begin{prf}
    Recall in~\pref{def:empirical-averages-deterministic-rate-function-u,v,w} we have
    \[
        \td{F}(u,v,w) = -\left(u^2 + v^2 + \lambda w^2\right) - 2\lambda q^* w + \E_{h^t}\left[\ln\left(e^{2\lambda w}\cosh\left(2\beta u  + 2 h^t\right) + e^{-2\lambda w}\cosh\left(2\beta v\right)\right)\right]\,.
    \]
    Since the only dependence on $v$ is through the quadratic and $\cosh(\cdot)$ component, $\td{F}(u,v,w)$ is even as a function of $v$. Now, fix $(u,w)$ and using the fact that $-\sinh^2(\cdot) + \cosh^2(\cdot) = 1$, we have
    \allowdisplaybreaks
    \begin{align*}
        &\quad \partial_{v,v}\td{F}(u,v,w) \\
        &= -2 + \E_{h^t}\left[\frac{4\beta^2 e^{-2\lambda w}\cosh(2\beta v)}{e^{2\lambda w}\cosh(2\beta u+2h^t) +e^{-2\lambda w}\cosh(2\beta v)}-\frac{4\beta^2 e^{-4\lambda w}\sinh^2(2\beta v)}{\left(e^{2\lambda w}\cosh(2\beta u+2h^t)+e^{-2\lambda w}\cosh(2\beta v)\right)^2}\right]\, \\ 
        &= -2 + 4\beta^2 \E_{h^t}\left[\frac{e^{-2\lambda w}\cosh(2\beta v)}{e^{2\lambda w}\cosh(2\beta u+2h^t)+e^{-2\lambda w}\cosh(2\beta v)} - \frac{\left(e^{-2\lambda w}\cosh(2\beta v)\right)^2-e^{-4\lambda w}}{\left(e^{2\lambda w}\cosh(2\beta u+2h^t)+e^{-2\lambda w}\cosh(2\beta v)\right)^2}\right] \\
        &= -2 + 4\beta^2\left(\E_{h^t}\left[\frac{e^{2\lambda w}\cosh(2(\beta u+h^t)) e^{-2\lambda w}\cosh(2\beta v)}{\left(e^{2\lambda w}\cosh(2\beta u+2h^t)+e^{-2\lambda w}\cosh(2\beta v)\right)^2}\right] + \E_{h^t}\left[\frac{e^{-4\lambda w}}{\left(e^{2\lambda w}\cosh(2\beta u+2h^t)+e^{-2\lambda w}\cosh(2\beta v)\right)^2}\right]\right) \\
        &\leq -2 + 4\beta^2\frac{1}{4} + 4\beta^2 \leq 5\beta^2 -2 <_{\beta <1/2}0\,,
    \end{align*}
    where we used the fact that,
    \allowdisplaybreaks
    \begin{align*}
        &\quad \frac{e^{-4\lambda w}}{\left(e^{2\lambda w}\cosh(2\beta u + 2 h^t) + e^{-2\lambda w}\cosh(2\beta v)\right)^2} \\
        &=\frac{e^{-4\lambda w}}{\cosh(2\beta u+2h^t)\left(2\cosh(2\beta v)+e^{4\lambda w} \cosh(2\beta u + 2h^t)\right)+e^{-4\lambda w} \cosh^2(2\beta v)} \\
        &\leq_{e^{(\cdot)}> 0, \cosh(\cdot)\geq 1} \frac{1}{\cosh^2(2\beta v)} \le 1\,. \qedhere
    \end{align*}
\end{prf}

The strict concavity and even-ness in $v$ immediately yield that every maximizer of $\td{F}(u,v,w)$ has $v= 0$. To argue uniqueness of the global maximizer, it suffices to show that $\nabla^2 \td{F}(u,0,w) \prec 0$ for some $0<\lambda < 7/16$ and $0< \beta <1/2$.

\begin{lemma}[Joint concavity of $\td{F}(u,0,w)$ in $(u,w)$ for $(\beta,\lambda) \in (0,1/2)\times(0,7/16)$]\label{lem:joint-concavity-basis-changed-coordinate-function}
    Fix $0 < \beta < 1/2$ and $0 < \lambda <7/16$. Then, for any $(u, w) \in \R^2$,
    \[
        \nabla^2_{u, w} \td{F}(u,0,w) \prec 0\,.
    \]
\end{lemma}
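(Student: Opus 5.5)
The plan is to compute the $2\times 2$ Hessian of $(u,w)\mapsto\td F(u,0,w)$ explicitly. I will write it as a deterministic diagonal matrix plus the expectation of a random matrix that depends only on two bounded scalars. Then I will show that the diagonal part dominates that random matrix \emph{pointwise}, i.e.\ for every realization of $h^t$, with a deterministic margin. Taking expectations preserves a uniform Loewner inequality, so $\nabla^2_{u,w}\td F(u,0,w)\prec 0$ follows.

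\textbf{Step 1 (derivatives).} Fix $(u,w)$ and a value $h$ of $h^t$. Put $T:=\tanh(2\beta u+2h)$ and
\[
p:=\frac{e^{2\lambda w}\cosh(2\beta u+2h)}{e^{2\lambda w}\cosh(2\beta u+2h)+e^{-2\lambda w}}\in(0,1),\qquad q:=p(1-p)\in\left(0,\tfrac14\right].
\]
With $A$ as in \eqref{eq:function-of-u,v,w,h^t} at $v=0$, direct differentiation gives $\partial_u\ln A=2\beta pT$ and $\partial_w\ln A=2\lambda(2p-1)$. It also gives $\partial_u p=2\beta qT$, $\partial_w p=4\lambda q$ and $\partial_uT=2\beta(1-T^2)$. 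Hence
\[
\partial_{uu}\ln A=4\beta^2 p(1-pT^2),\qquad \partial_{uw}\ln A=8\beta\lambda\, qT,\qquad \partial_{ww}\ln A=16\lambda^2 q.
\]
All of these are bounded uniformly in $h$. Dominated convergence, exactly as in \pref{prop:stationary-point-rate-function-variance-mgf}, then yields
\[
\nabla^2_{u,w}\td F(u,0,w)=-\begin{pmatrix}2&0\\0&2\lambda\end{pmatrix}+\E_{h^t}\left[N(p,T)\right],\qquad N(p,T):=\begin{pmatrix}4\beta^2p(1-pT^2)&8\beta\lambda qT\\ 8\beta\lambda qT&16\lambda^2q\end{pmatrix}.
\]

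\textbf{Step 2 (pointwise reduction).} It suffices to find $\eps=\eps(\beta,\lambda)>0$ such that $D-N(p,T)\succeq\eps I_2$ for all $p\in(0,1)$ and $T\in[-1,1]$, where $D=\mathrm{diag}(2,2\lambda)$. Taking $\E_{h^t}$ then gives $\nabla^2_{u,w}\td F\preceq-\eps I_2$.

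For the diagonal entries, $p(1-pT^2)\le 1$ gives $2-4\beta^2p(1-pT^2)\ge 2-4\beta^2>1$. Likewise $q\le 1/4$ gives $2\lambda-16\lambda^2q\ge 2\lambda(1-2\lambda)>0$, since $\lambda<7/16<1/2$.

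For the determinant, write $b:=4\beta^2\in(0,1)$. After dividing by $\lambda$, the condition becomes
\[
G(p,T^2):=\bigl(2-bp(1-pT^2)\bigr)\bigl(2-16\lambda q\bigr)-16b\lambda\,T^2q^2>0 .
\]

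\textbf{Step 3 (the key inequality).} This is the only delicate step. The crude bounds $|N_{12}|\le2\beta\lambda$, $N_{22}\le4\lambda^2$ and $N_{11}\le 4\beta^2$ only reach $\lambda<2/5$. That is because they pretend $q=1/4$ and $p(1-pT^2)=1$ hold simultaneously, which is impossible. The refined argument has three parts.

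\emph{Linearity in $T^2$.} The function $G$ is affine in $T^2\in[0,1]$, so it suffices to check the endpoints. At $T^2=0$ we get $G=(2-bp)(2-16\lambda q)\ge(2-b)(2-4\lambda)>0$.

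\emph{The endpoint $T^2=1$.} Here $p(1-p)=q$, so $G=(2-bq)(2-16\lambda q)-16b\lambda q^2$. This is decreasing in $b$, so $G> (2-q)(2-16\lambda q)-16\lambda q^2=4-2q-32\lambda q$. The last expression is decreasing in $q\in(0,1/4]$, so it is at least $\tfrac72-8\lambda$, which is positive precisely because $\lambda<7/16$. This explains the threshold $7/16$.

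\emph{Uniform margin.} All the bounds above depend only on $(\beta,\lambda)$. So the minimum eigenvalue of $D-N(p,T)$ is bounded below by some $\eps(\beta,\lambda)>0$. Indeed, a symmetric $2\times2$ matrix with diagonal entries at least $1\wedge 2\lambda(1-2\lambda)$, trace at most $2+2\lambda$, and determinant at least $\lambda\min\{(2-b)(2-4\lambda),\tfrac72-8\lambda\}$ has its smallest eigenvalue bounded below by the determinant divided by the trace.

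This yields $\nabla^2_{u,w}\td F(u,0,w)\preceq-\eps I_2\prec0$ for every $(u,w)\in\R^2$.
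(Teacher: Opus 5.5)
Your proof is correct and follows the paper's argument. The paper also reduces to a pointwise $2\times 2$ matrix in $p$ (its $p$ is your $1-p$) and $x=\tanh(2\beta u+2h^t)$, and it writes $\det B$ as a combination with weights $1-x^2$ and $x^2$; this is exactly your observation that $G$ is affine in $T^2$, with the same endpoint values $4\lambda(1-2\beta^2 p)(1-8\lambda q)$ and $4\lambda\bigl(1-2(\beta^2+4\lambda)q\bigr)$ in your notation. The only difference is minor: the paper concludes from $\E_{h^t}[V^\top B V]>0$ for an almost surely positive integrand, whereas your determinant-over-trace bound gives a uniform margin $\nabla^2_{u,w}\td F(u,0,w)\preceq -\eps I_2$, which is slightly stronger than the lemma requires.
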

\begin{prf}
    Elementary algebra upon taking derivatives yields that
    \allowdisplaybreaks
    \begin{align*}
    -\nabla^2_{u, w} \td{F}(u,0,w) &= \begin{bmatrix}
        2 - 4\beta^2\E_{h^t}\left[\frac{e^{8\lambda w} + e^{4\lambda w}\cosh(2\beta u + 2h^t)}{\left(1 + e^{4\lambda w}\cosh(2\beta u + 2h^t)\right)^2}\right] & -8\beta\lambda\E_h\left[\frac{e^{4\lambda w}\sinh(2\beta u + 2h^t)}{\left(1 + e^{4\lambda w}\cosh(2\beta u + 2h^t)\right)^2}\right] \\
        -8\beta\lambda\E_{h^t}\left[\frac{e^{4\lambda w}\sinh(2\beta u + 2h^t)}{\left(1 + e^{4\lambda w}\cosh(2\beta u + 2h^t)\right)^2}\right] & 2\lambda - 16\lambda^2\E_h\left[\frac{e^{4\lambda w}\cosh(2\beta u + 2h^t)}{(1+e^{4\lambda w}\cosh(2\beta u + 2 h^t))^2}\right]\, 
        \end{bmatrix}\, \\
        &= \E_{h^t} \begin{bmatrix}
        2-\frac{4\beta^2 \left(e^{8\lambda w} + e^{4\lambda w}\cosh(2\beta u + 2h^t)\right)}{\left(1 + e^{4\lambda w}\cosh(2\beta u + 2h^t)\right)^2} & -\frac{8\beta\lambda e^{4\lambda w}\sinh(2\beta u + 2h^t)}{\left(1 + e^{4\lambda w}\cosh(2\beta u + 2h^t)\right)^2} \\
        - \frac{8\beta\lambda e^{4\lambda w}\sinh(2\beta u + 2h^t)}{\left(1 + e^{4\lambda w}\cosh(2\beta u + 2h^t)\right)^2} & 2\lambda -  \frac{16\lambda^2 e^{4\lambda w}\cosh(2\beta u + 2h^t)}{(1+e^{4\lambda w}\cosh(2\beta u + 2 h^t))^2}
        \end{bmatrix} .
    \end{align*}
    For any fixed $h^t(\omega), u, w$, let
    \[
    p=\frac{1}{1+e^{4\lambda w}\cosh(2\beta u + 2h^t(\omega))} \in (0,1), \quad x=\tanh(2\beta u + 2h^t(\omega)) \in (-1,1), 
    \]
    then simple calculation yields
    \[
    \begin{aligned}
        & 2-\frac{4\beta^2 \left(e^{8\lambda w} + e^{4\lambda w}\cosh(2\beta u + 2h^t)\right)}{\left(1 + e^{4\lambda w}\cosh(2\beta u + 2h^t)\right)^2} = 2 - 4\beta^2\left((1-p) - (1-p)^2 x^2\right), \\
        & -\frac{8\beta\lambda e^{4\lambda w}\sinh(2\beta u + 2h^t)}{\left(1 + e^{4\lambda w}\cosh(2\beta u + 2h^t)\right)^2} = -8\beta\lambda p (1-p)x ,\\
        & 2\lambda -  \frac{16\lambda^2 e^{4\lambda w}\cosh(2\beta u + 2h^t)}{(1+e^{4\lambda w}\cosh(2\beta u + 2 h^t))^2} = 2\lambda- 16\lambda^2 p(1-p).
    \end{aligned}
    \]
    Define a $2 \times 2 $ matrix as:
    \[
    B(h^t(\omega), u, w) = \begin{bmatrix}
    2 - 4\beta^2\left((1-p) - (1-p)^2 x^2\right) & -8\beta\lambda p (1-p)x \\
    -8\beta\lambda p (1-p)x & 2\lambda- 16\lambda^2 p(1-p) 
    \end{bmatrix} \in \R^{2 \times 2},
    \]
    then 
    \[
    -\nabla^2_{u, w} \td F(u, 0,w) = \E_{h^t}\left[B(h^t(\omega), u,w)\right] .
    \]
    We only need to show for any $h^t(\omega)$, $B(h^t(\omega), u, w)\succ 0$, then for any 2-dimensional nonzero vector $V \in \R^2$, 
    \[
    V^\top (-\nabla^2_{u, w} \td F(u, 0, w)) V = \E_{h^t} \underbrace{\left[V^\top B(h^t(\omega), u, w) V\right]}_{>0} >0 ,
    \]
    which implies $\nabla^2 \td F(u, 0, w) \prec 0$. Since $p>0$ and $(1-p)^2x^2\geq0$,
    \[
    (B(h^t(\omega), u,w)_{11} = 2- 4\beta^2\left(1-p-(1-p)^2x^2\right) >2 - 4\beta^2 >_{\beta <1/2} 0.
    \]
    Next,
    \[
    \begin{aligned}
        \det(B(h^t(\omega), u, w)) &= \left(2 - 4\beta^2(1-p) + 4\beta^2(1-p)^2 x^2\right) \left(2\lambda- 16\lambda^2 p(1-p)\right) - 64\beta^2\lambda^2p^2(1-p)^2 x^2 \\
        &= 4\lambda(1-2\beta^2(1-p))(1-8\lambda p(1-p))(1-x^2)+\left(4\lambda \left(1-2\beta^2(1-p) + 2\beta^2(1-p)^2\right)\right. \\
        & \qquad \qquad \left.\left(1-8\lambda p(1-p)\right)-64\beta^2\lambda^2p^2(1-p)^2\right) x^2 \\
        &= 4\lambda(1-2\beta^2(1-p))(1-8\lambda p(1-p))(1-x^2) + 4\lambda \left(1-2\left(\beta^2 + 4\lambda\right)p(1-p)\right)x^2 .
    \end{aligned}
    \]
    Both $1-x^2$ and $x^2 \geq 0$ and they will not be $0$ simultaneously, hence it remains to show
    \[
    \text{both} \quad (1-2\beta^2(1-p))(1-8\lambda p(1-p)) \quad \text{and} \quad \left(1-2\left(\beta^2 + 4\lambda\right)p(1-p)\right) >0.
    \]
    Since $p \in (0,1)$, and $p(1-p)\leq 1/4$, we have
    \[
    (1-2\beta^2(1-p))(1-8\lambda p(1-p)) >(1-2\beta^2)\left(1-8\lambda \frac{1}{4}\right) >_{\beta<1/2, \lambda<7/16}\left(1- \frac{1}{2}\right)\left(1-\frac{7}{8}\right)>0,
    \]
    also
    \[
    \left(1-2\left(\beta^2 + 4\lambda\right)p(1-p)\right) >_{\beta<1/2, \lambda<7/16} \left(1-2\left(\frac{1}{4}+\frac{7}{4}\right)\frac{1}{4}\right) = 0. \qedhere
    \]
\end{prf}

Based on the analysis above, $\td c$ is the unique global maximizer for $\td F(u, v, w)$. Next, we immediately derive some analytic properties of this function.

\paragraph{Negative-definiteness of $\nabla^2 \td{F}(\beta m^*,0,0)$ for $(\beta,\lambda) \in (0,1/2)\times(0,7/16)$}
We now show $\nabla^2 \td{F}(\td c)\prec 0 $ since we will apply the Laplace-approximation theorem to $\bar F(u, v,w)$ around the critical point shown in~\pref{prop:stationary-point-rate-function-variance-mgf}, and then control the remainder terms with arguments akin to~\cite[\S3]{kabluchko2024propagation}.

\begin{lemma}[Negative definiteness of the Hessian at the critical point]\label{cor:neg-def-hessian-ibp}
Let $0<\beta<1/2$ and $0<\lambda<7/16$. Set $\td F:\R^3\to\R$ as in~\pref{def:empirical-averages-deterministic-rate-function-u,v,w}, then
\[
\nabla^2 \td F(\td c) \prec 0.
\]

\end{lemma}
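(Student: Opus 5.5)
The plan is to compute the full $3\times 3$ Hessian of $\td F$ at $\td c=(\beta m^*,0,0)$ and show that it is block diagonal, with the $v$-direction decoupled from $(u,w)$. Negative definiteness then reduces to two facts already in hand: \pref{lem:strict-concavity-f-transformed-basis} and \pref{lem:joint-concavity-basis-changed-coordinate-function}.

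\textbf{Step 1: differentiate under the expectation.} We differentiate $\ln A(h^t;u,v,w)$ twice under $\E_{h^t}$. This is justified by dominated convergence, exactly as in \pref{prop:stationary-point-rate-function-variance-mgf}: every second derivative of $\ln A$ is a ratio of $\cosh$ and $\sinh$ terms over $A$ or $A^2$, so it is bounded by constants depending only on $(\beta,\lambda)$, uniformly in $h^t$.

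\textbf{Step 2: the mixed $v$-derivatives vanish.} Each of $\partial_{u,v}\ln A$ and $\partial_{w,v}\ln A$ carries a factor $\sinh(2\beta v)$. Hence both vanish at $v=0$, for every $h^t$. This is also a consequence of the evenness of $\td F$ in $v$. Therefore
\[
\nabla^2\td F(\td c)=\begin{bmatrix}\partial_{u,u}\td F(\td c) & 0 & \partial_{u,w}\td F(\td c)\\ 0 & \partial_{v,v}\td F(\td c) & 0\\ \partial_{u,w}\td F(\td c) & 0 & \partial_{w,w}\td F(\td c)\end{bmatrix}.
\]

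\textbf{Step 3: the $v$-block.} By the computation in the proof of \pref{lem:strict-concavity-f-transformed-basis}, valid at every $(u,v,w)$, we have $\partial_{v,v}\td F(\td c)\le 5\beta^2-2<0$ for $\beta<1/2$.

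\textbf{Step 4: the $(u,w)$-block.} The remaining block is exactly $\nabla^2_{u,w}\td F(u,0,w)$ evaluated at $(u,w)=(\beta m^*,0)$. Here one point needs care. In the preceding lemma, $u,w$ are varied with $v$ held at $0$, so the Hessian of the restriction $(u,w)\mapsto \td F(u,0,w)$ coincides with the corresponding principal submatrix of the full Hessian. \pref{lem:joint-concavity-basis-changed-coordinate-function} then gives that this block is $\prec 0$ for $0<\beta<1/2$ and $0<\lambda<7/16$.

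\textbf{Step 5: conclude.} A block-diagonal symmetric matrix whose blocks are negative definite is negative definite, so $\nabla^2\td F(\td c)\prec 0$.

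\textbf{Main obstacle.} The argument is essentially bookkeeping, so the only real check is Step 2. One must confirm that no cross term survives at $v=0$. In particular, the $w$-derivative of the $e^{-2\lambda w}\cosh(2\beta v)$ term contributes to $\partial_{w,v}$ only through $\sinh(2\beta v)$. If one preferred a quantitative statement, one could evaluate at $u=\beta m^*$ using the substitution $p$, $x=\tanh(2\beta^2 m^*+2h^t)$ from the previous proof, but this is not needed.
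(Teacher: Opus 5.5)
Your proposal is correct and follows the paper's proof essentially verbatim: evenness in $v$ kills the mixed $v$-derivatives, the $v$-block is handled by the concavity computation of \pref{lem:strict-concavity-f-transformed-basis}, and the $(u,w)$-block by \pref{lem:joint-concavity-basis-changed-coordinate-function}. The only cosmetic difference is in the $v$-block. You quote the global bound $5\beta^2-2$, whereas the paper evaluates $\partial_{v,v}\td F(\td c)=-2+4\beta^2\E_{h^t}\bigl[1/(\cosh(2\beta^2m^*+2h^t)+1)\bigr]\le -2+4\beta^2$ explicitly; either bound suffices for $\beta<1/2$.
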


\begin{prf}
Since $\td F(u,v,w)$ is an even function of $v$, then for any $u, w$,
    \[
    \partial_v \td F(u,0,w)=0,\qquad \partial_{uv}\td F(u,0,w)=0,\qquad \partial_{wv}\td F(u,0,w)=0.
    \]
    Hence
    \[
    \nabla^2\td F(\td c)=
    \begin{pmatrix}
    \partial_{u,u}\td F(\td c)&0&\partial_{u,w}\td F(\td c)\\
    0&\partial_{v,v}\td F(\td c)&0\\
    \partial_{u,w}\td F(\td c)&0&\partial_{w,w}\td F(\td c)
    \end{pmatrix}.
    \]
    Therefore, for any nonzero vector $y:=(y_u, y_v, y_w)^\top \in \R^3$,
    \[
    y^\top \nabla^2 \td F(\td c) y = (y_u, y_w) \nabla_{u,w}^2 \td F(\td c) \binom{y_u}{y_w} + \partial_{v,v} \td{F}(\td c) y_v^2.
    \]
    By~\pref{lem:strict-concavity-f-transformed-basis}, 
    \[
    \partial_{v,v} \td{F}(\td c) = -2 + 4\beta^2 \E_{h^t}\left[\frac{1}{\cosh(2\beta^2 m^* + 2h^t) + 1}\right]\leq -2 + 4\beta^2 <_{\beta<1/2}0 .
    \]
    Besides, by~\pref{lem:joint-concavity-basis-changed-coordinate-function}, $ \nabla^2_{u,w}\td{F}(\td c)\prec 0$. Therefore for any nonzero vector $y$,
    \[
        y^\top \nabla^2 \td F(\td c)y <0 \implies \nabla^2\td F(\td c) \prec 0\,.
    \]
\end{prf}

\paragraph{Uniform bound on $\partial_{j,k,l} \bar F(\mathbf h; u, v, w)_{j ,k,l \in \{u,v,w\}}$ and $\partial_{j,k,l} \td F(u, v, w)_{j ,k,l \in \{u,v,w\}}$} To do a Taylor expansion with remainder around the critical point $\td c$ in a small region and use a Gaussian integral to evaluate the Hessian contribution, it is critical to uniformly bound the third-order derivatives and show that the term is negligible.

\begin{proposition}[Uniform bound for third-derivatives of $ \bar{F}$ and $\td{F}$]\label{prop:uniform-bound-f-third-derivative}
    Let $\bar{F}$ and $\td F$ be as defined in \pref{def:empirical-averages-deterministic-rate-function-u,v,w}, then for every choice of $j,k, l \in \{u, v, w\}$ and any $\beta, \lambda>0$,
    \[
        \left|\partial_{j,k, l} \bar F(\mathbf h; u, v, w)\right| \le C_{\beta, \lambda}< \infty\, ,
    \]
    and
    \[
    \left|\partial_{j,k, l} \td F(u, v, w)\right| \le C_{\beta, \lambda}< \infty\, ,
    \]
    where $C_{\beta, \lambda}$ is some constant depending on $\beta$ and $\lambda$.
\end{proposition}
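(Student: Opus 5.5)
Since the quadratic part $-(u^2+v^2+\lambda w^2)-2\lambda q^*w$ has vanishing third derivatives, it suffices to bound the third derivatives of $\ell(h;u,v,w):=\ln A(h;u,v,w)$, with $A$ as in~\eqref{eq:function-of-u,v,w,h^t}, by a constant $C_{\beta,\lambda}$ that is uniform in $h\in\R$ and in $(u,v,w)\in\R^3$. Such a bound passes directly to $\bar F$, because $\bar F$ averages $\ell(h_i;\cdot)$ over $i$ and an average of uniformly bounded quantities obeys the same bound, with no dependence on the realization $\mathbf h$. It also passes to $\td F$: the second-order derivatives of $\ell$ are uniformly bounded as well (shown below), so dominated convergence lets us differentiate under $\E_{h^t}$ up to order three, exactly as in the proof of \pref{prop:stationary-point-rate-function-variance-mgf}.

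To get the uniform bound, I would write $\ell$ as a log-partition function of a two-state Gibbs-type weight. Set $a=2\beta u+2h$, $b=2\beta v$, $c=2\lambda w$. Then
\[
A=\tfrac12\left(e^{c+a}+e^{c-a}+e^{-c+b}+e^{-c-b}\right),
\]
so $\ell=\ln\sum_{s}e^{\langle \theta,\phi(s)\rangle}-\ln2$ with $\theta=(a,b,c)$. Here $s$ ranges over four states, and the sufficient statistics $\phi(s)\in\{(\pm1,0,1),(0,\pm1,-1)\}$ all have coordinates in $[-1,1]$. Derivatives of a log-partition function in $\theta$ are cumulants of $\phi$ under the induced probability measure on the four states. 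The first, second and third derivatives are therefore the mean, covariance and third joint cumulant of variables bounded by $1$. Each third cumulant $\kappa(\phi_j,\phi_k,\phi_l)=\E[(\phi_j-\E\phi_j)(\phi_k-\E\phi_k)(\phi_l-\E\phi_l)]$ is at most $8$ in absolute value, and each second cumulant is at most $4$. This holds uniformly in $\theta$, hence uniformly in $h$ and $(u,v,w)$.

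The chain rule then finishes the argument. The map $(u,v,w)\mapsto\theta$ is affine with Jacobian $\mathrm{diag}(2\beta,2\beta,2\lambda)$, so
\[
|\partial_{j,k,l}\ell|\le 8\,(2\max\{\beta,\lambda\})^3=:C_{\beta,\lambda}.
\]
The same constant applies to $\bar F$ and to $\td F$.

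The only delicate step is justifying the exchange of $\partial^3$ with $\E_{h^t}$ for $\td F$. The uniform bounds on the first-, second- and third-order derivatives of $\ell$ give integrable (constant) dominating functions, so the exchange follows from the mean value theorem and dominated convergence applied order by order. No moment assumption on $h^t$ beyond finiteness of $\E|h^t|$, which makes $\td F$ itself well defined as in \pref{prop:f-infinite-limits}, is needed. The rest is bookkeeping. If one prefers to avoid the cumulant language, the same conclusion follows by expressing each derivative as a polynomial in the ratios $e^{\pm c}\cosh a/A$, $e^{\pm c}\sinh a/A$, $e^{-c}\cosh b/A$ and $e^{-c}\sinh b/A$, each of which lies in $[-1,1]$. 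This is the device already used in \pref{prop:stationary-point-rate-function-variance-mgf} and \pref{lem:strict-concavity-f-transformed-basis}.
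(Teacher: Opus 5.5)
Your proof is correct and is essentially the paper's argument in different language. The paper expands $\partial_{jkl}\ln A$ by the quotient rule into terms $\partial_{jkl}A/A$, $\partial_jA\,\partial_{kl}A/A^2$ and $\partial_jA\,\partial_kA\,\partial_lA/A^3$, and bounds each via $|\partial_{r_1\cdots r_m}A|\le(\prod_i C_{r_i})A$ with $C_u=C_v=2\beta$, $C_w=2\lambda$; this is exactly the raw-moment expansion of your third cumulant, with each moment bounded by $1$ in absolute value. The only difference is the constant: term-by-term bounding gives $6C_jC_kC_l\le 48\max\{\beta,\lambda\}^3$ rather than your central-moment bound $8C_jC_kC_l$, which is immaterial here, and your dominated-convergence justification for $\td F$ matches the paper's.
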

\begin{prf}
Note that the linear and quadratic part of $\bar F$ do not contribute to the third derivatives, and only the logarithmic term contributes. Let $A(h^t;u,v,w)$ be as defined by~\pref{eq:function-of-u,v,w,h^t}, then
    \[
    A_i:=A(h_i;u,v,w) = e^{2\lambda w}\cosh\left( 2\beta u + 2h_i\right) + e^{-2\lambda w}\cosh\left(2\beta v\right),
    \]
    where $\{h_i\}_{i \in [n]}$ are fixed realization. Then
    \[
        \bar F(\mathbf h; u,v,w) = -\left(u^2 + v^2 + \lambda w^2\right) - 2\lambda w q^* + \frac{1}{n}\sum_{i=1}^n \ln\left(A_i(h_i;u,v,w)\right)\,.
    \]
    Hence for any $j, k, l\in\{u,v,w\}$,
    \begin{align*}
        &\partial_{j,k,l}\bar F(\mathbf h; u,v,w) = \frac{1}{n}\sum_{i=1}^n\partial_{j,k,l} \ln\left(A_i\right) = \frac{1}{n}\sum_{i=1}^n \partial_{j,k}\left(\frac{\partial_{l} A_i}{A_i}\right) = \frac{1}{n}\sum_{i=1}^n \partial_{j}\left(\frac{A_i\partial_{k,l}A_i - \partial_{l}A_i\partial_{k}A_i}{A_i^2}\right) \\
        &= \frac{1}{n}\sum_{i=1}^n \left(\frac{A_i^2\partial_j\left(A_i\partial_{k,l}A_i - \partial_kA_i\partial_l A_i\right) - 2A_i^2\partial_jA_i\partial_{k,l}A_i + 2A_i\partial_j A_i\partial_k A_i\partial_l A_i}{A_i^4}\right) \\
        &= \frac{1}{n}\sum_{i=1}^n \left(\frac{A_i^2\partial_{j}A_i\partial_{k,l}A_i + A_i^3\partial_{j,k,l}A_i - A_i^2\partial_{j,k}A_i\partial_l A_i - A_i^2\partial_{j,l}A_i\partial_k A_i - 2A_i^2\partial_jA_i\partial_{k,l}A_i + 2A_i\partial_j A_i\partial_k A_i\partial_l A_i}{A_i^4}\right) \\
        &= \frac{1}{n}\sum_{i=1}^n \left(\frac{1}{A_i^2}\left(-\partial_j A_i\partial_{k,l}A_i - \partial_{j,k}A_i\partial_l A_i -\partial_{jl}A_i \partial_k A_i\right) + \frac{\partial_{j,k,l}A_i}{A_i} + 2\frac{\partial_j A_i\partial_k A_i\partial_l A_i}{A_i^3}\right).
    \end{align*}
    Define $C_u=C_v=2\beta$, $C_w=2\lambda$, using the facts that $|\sinh(\cdot)| \le \cosh(\cdot)$ and $1 \le \cosh(\cdot)$ in conjunction with triangle inequality, we have $\forall r_1, \dots, r_m \in \{u, v, w\}$ and $m \in \{1,2,3\}$,
    \[
    \left|\partial_{r_1, \dots, r_m} A_i\right| \leq \left(\prod_{i=1}^m C_{r_i}\right) A_i .
    \]
    Therefore 
    \[
    |\partial_{j,k,l}\bar F(\mathbf h; u,v,w)|\leq 6C_j C_k C_l \leq 48 \max\{\beta^3, \beta^2\lambda, \beta\lambda^2, \lambda^3\}=48\max\{\beta ,\lambda\}^3:= C_{\beta, \lambda} < \infty
    \]
    The bound for $\left|\partial_{j,k,l}\td{F}(u,v,w)\right|$ follows straightforwardly by replacing the empirical average with $\E_{h^t}$ for the deterministic rate function and using the dominated convergence theorem to interchange the expectation and the derivative. \qedhere
\end{prf}

\subsection{Random Laplace approximation around the critical point.}\label{subsec:laplace-approximation-around-critical-point}

We first establish a lemma characterizing the almost-sure convergence of $\nabla^2\td{\Delta}(\mathbf h; \td c)$, which will be needed in the proof of the Taylor expansion component of the rigorous Laplace approximation (\pref{thm:whp-overlap-concentration-laplace-cwrf}).

\begin{lemma}[Almost-sure convergence of empirical Hessian to deterministic Hessian]\label{lem:as-convergence-hessian-rate-function}
For almost every realization $\mathbf h = \mathbf h^t(\omega)$ and any $\beta, \lambda >0$,
\[
\max_{a,b\in\{u,v,w\}}\left|\partial_{a,b}\td{\Delta}(\mathbf h; \td{c})\right|\longrightarrow 0.
\]
\end{lemma}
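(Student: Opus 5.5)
The plan is to apply the strong law of large numbers six times, once for each entry of the Hessian at the fixed point $\td c$. Because $\td c=(\beta m^*,0,0)$ is deterministic, each entry $\partial_{a,b}\td\Delta(\mathbf h;\td c)$ is an empirical average of i.i.d.\ variables minus its mean. Write $A(h;u,v,w)$ as in \eqref{eq:function-of-u,v,w,h^t} and set
\[
g_{ab}(h):=\partial_{a,b}\ln A(h;u,v,w)\big|_{(u,v,w)=\td c}.
\]
Then we need
\[
\partial_{a,b}\td\Delta(\mathbf h;\td c)=\frac1n\sum_{i=1}^n g_{ab}(h_i)-\partial_{a,b}\E_{h^t}\left[\ln A(h^t;\cdot)\right]\Big|_{\td c}.
\]

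The first step is to justify exchanging $\partial_{a,b}$ with $\E_{h^t}$ in the deterministic term. The second derivatives of $\ln A$ are $\partial_{a,b}A/A-\partial_aA\,\partial_bA/A^2$. The bound $|\partial_{r_1\cdots r_m}A|\le \prod_j C_{r_j}\,A$ from the proof of \pref{prop:uniform-bound-f-third-derivative} then gives the uniform estimate $|\partial_{a,b}\ln A|\le 2C_aC_b$, with $C_u=C_v=2\beta$ and $C_w=2\lambda$. This holds for all $(u,v,w)$ and $h$. The first derivatives are bounded in the same way, as already shown in the proof of \pref{prop:stationary-point-rate-function-variance-mgf}. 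Dominated convergence, applied twice (via the mean value theorem on difference quotients), therefore gives $\partial_{a,b}\E_{h^t}[\ln A]=\E_{h^t}[\partial_{a,b}\ln A]$. Consequently
\[
\partial_{a,b}\td\Delta(\mathbf h;\td c)=\frac1n\sum_i g_{ab}(h_i)-\E_{h^t}[g_{ab}(h^t)].
\]

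The second step is to apply Kolmogorov's strong law to the i.i.d.\ sequence $g_{ab}(h_i^t)$. This sequence is bounded by $2C_aC_b$, hence integrable, and is measurable because $g_{ab}$ is continuous in $h$. This yields an event $\Omega_{ab}$ of full probability on which the difference tends to $0$.

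The third step is to set $\Omega_0=\bigcap_{a,b}\Omega_{ab}$, a finite intersection of six full-probability events. On $\Omega_0$ the maximum over the finitely many pairs $(a,b)$ tends to $0$. In the degenerate case $t=q^*=0$ the field is identically zero, so the $g_{ab}(h_i)$ are constant and the claim is trivial; even so, the law of large numbers argument covers this case too.

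There is no real obstacle here. The only point requiring care is the interchange of derivative and expectation, and the uniform bound from \pref{prop:uniform-bound-f-third-derivative} makes it routine. It is also worth noting explicitly that pointwise evaluation at the fixed deterministic point $\td c$ is what lets us avoid any uniform-in-parameter (Glivenko–Cantelli type) argument.
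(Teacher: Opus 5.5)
Your proposal is correct and follows essentially the same route as the paper: an entrywise Kolmogorov strong law for the bounded i.i.d.\ variables $\partial_{a,b}\ln A(h_i^t;\td c)$, followed by intersecting finitely many full-probability events. The one addition is that you explicitly justify $\partial_{a,b}\E_{h^t}[\ln A]=\E_{h^t}[\partial_{a,b}\ln A]$ by dominated convergence, using the uniform bounds on the first and second derivatives; the paper uses this interchange only implicitly.
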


\begin{prf}
Let $A(h^t;u,v,w)$ be as defined by~\pref{eq:function-of-u,v,w,h^t}, and $a, b \in \{u, v, w\}$. Recall that $h_i^t\stackrel{\mathrm{i.i.d.}}{\sim}
\mathcal N\left(t,\beta^2 q^* + t\right)$ and $C_a, C_b \in \{2\beta, 2\lambda\}$. Since
\[
\left|\partial_{a,b}\ln A(h_i^t; \td{c})-\E_{h^t}\left[\partial_{a,b}\ln A(h^t;\td{c})\right]\right|\leq 4 C_a C_b,
\]
then by Kolmogorov's strong law of large numbers,
\[
\frac{1}{n}\sum_{i=1}^n \left(\partial_{a,b}\ln A(h_i^t; \td{c})-\E_{h^t}\left[\partial_{a,b}\ln A(h^t;\td{c})\right]\right)\xrightarrow{\mathrm{a.s.}}\E_{\mathbf h^t}\left[\partial_{a,b}\ln A(h_i^t; \td{c})-\E_{h^t}\!\left[\partial_{a,b}\ln A(h^t;\td{c})\right]\right]=0,
\]
which implies $|\partial_{a,b}\td{\Delta}(\mathbf h^t; \td{c})| \to 0$ almost surely.

It remains to show that the convergence holds simultaneously for all
entries of the Hessian. For each $a,b\in\{u,v,w\}$, define
\[
\Omega_{a,b}:=\left\{\omega:\lim_{n\to\infty}\left|\partial_{a,b}\td{\Delta}\left(\mathbf h^t(\omega); \td{c}\right)\right|=0\right\}.
\]
The preceding argument shows that $\P(\Omega_{a,b})=1$ for every $a,b\in\{u,v,w\}$. Since there are only finitely many such
pairs, we have
\allowdisplaybreaks
\begin{align*}
    \P\left(\omega: \lim_{n \to \infty} \max_{a, b \in \{u, v,w\}} \left|\partial_{a, b} \td \Delta\left(\mathbf h^t(\omega); \td{c}\right)\right| = 0\right) &= \P\left(\omega: \max_{a, b\in \{u, v,w\}} \lim_{n \to \infty}\left|\partial_{a, b} \td\Delta\left(\mathbf h^t(\omega); \td{c}\right)\right| = 0\right) \\
    &= \P\left(\bigcap_{a,b\in\{u,v,w\}} \Omega_{a, b}\right) = 1-\P\left(\bigcup_{a,b\in\{u,v,w\}} \Omega^c_{a, b}\right) \\
    &\geq 1- \sum_{a, b \in \{u, v,w\}} \left(1- \P\left(\Omega_{a, b}\right)\right) = 1. 
\end{align*}

Therefore, for almost every realization $\mathbf h = \mathbf h^t(\omega)$,
\[
\max_{a,b \in \{u, v, w\}} \left|\partial_{a, b} \td \Delta(\mathbf h; \td c)\right| \longrightarrow 0 . \qedhere
\]
\end{prf}

Next, we bound $\left\langle e^{\lambda n\left(R_{\sigma,\tau}-q^*\right)^2}\right\rangle$ for sufficient large $n$.

\begin{theorem}[Laplace approximation for $3$-dimensional random rate functions with sufficient regularity]\label{thm:whp-overlap-concentration-laplace-cwrf}
    Fix $(\beta,\lambda) \in (0,1/2)\times (0,7/16)$. Let $(m^*,q^*)$ be the unique solutions to~\pref{eq:fixpoint-equations}. Then, for almost every realization $\mathbf h = \mathbf h^t(\omega)$, sufficiently large $n$, and some constant $C_{1, 1}(\beta,\lambda) < \infty$,
    \[
        \an{e^{\lambda n (R_{\sigma,\tau}-q^*)^2}}_\mathbf h \le C_{1, 1}(\beta,\lambda)e^{n\Gamma_1(\mathbf h; \td c)/2}\, ,
    \]
    where
    \[
        \Gamma_1(\mathbf h; \td c) := \nabla\td\Delta(\mathbf h; \td c)^\top \left(-\nabla^2 \td F(\td c) - \varepsilon I_3\right)^{-1}\nabla\td\Delta(\mathbf h; \td c),
    \]
    for some sufficiently small $\eps > 0$.
\end{theorem}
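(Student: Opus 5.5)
Starting from \pref{lem:hs-variance-mgf}, and changing variables $(x,y)\to(u,v)$ (the Jacobian constants cancel or are absorbed into $C_{1,1}$), we write the quenched MGF as $\sqrt{\lambda n/\pi}\,N_n/D_n$. Here $N_n=\int_{\R^3}e^{n\bar F(\mathbf h;u,v,w)}$ and $D_n=\int_{\R^2}e^{n\bar F(\mathbf h;u,v,0)}$. We then split $\bar F=\td F+\td\Delta$. The plan is to bound $N_n$ from above and $D_n$ from below by Gaussian-type integrals centred at $\td c$ and $(\beta m^*,0)$ respectively. The factor $\sqrt n$ should then cancel against the one extra Gaussian dimension: $N_n\approx e^{n\bar F(\td c)}n^{-3/2}$ and $D_n\approx e^{n\bar F(\beta m^*,0,0)}n^{-1}$, and $\bar F(\td c)=\bar F(\beta m^*,0,0)$ since $w=0$ there. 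The term $e^{n\Gamma_1/2}$ is the price of the random linear tilt $\nabla\td\Delta(\mathbf h;\td c)$ in the upper bound for $N_n$.

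\textbf{Upper bound on $N_n$.} Split $\R^3$ into $B_\delta=\{\norm{z-\td c}\le\delta\}$ and its complement, with $\delta>0$ small and fixed.

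On $B_\delta$, Taylor expand $\td F$ to second order around $\td c$. Its gradient vanishes by \pref{prop:stationary-point-rate-function-variance-mgf}, and its third derivatives are bounded by \pref{prop:uniform-bound-f-third-derivative}. This gives $\td F(z)\le \td F(\td c)-\tfrac12 (z-\td c)^\top(-\nabla^2\td F(\td c)-\tfrac{\eps}{2}I)(z-\td c)$ once $\delta\le \eps/(C\,C_{\beta,\lambda})$.

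Expand $\td\Delta$ to second order as well. Its Hessian at $\td c$ tends to $0$ a.s. by \pref{lem:as-convergence-hessian-rate-function}, and its third derivatives are uniformly bounded, because both $\bar F$ and $\td F$ have bounded third derivatives. So for $n$ large, $\td\Delta(z)\le\td\Delta(\td c)+\nabla\td\Delta(\mathbf h;\td c)^\top(z-\td c)+\tfrac{\eps}{4}\norm{z-\td c}^2$. Since $-\nabla^2\td F(\td c)\succ 0$ by \pref{cor:neg-def-hessian-ibp}, $M:=-\nabla^2\td F(\td c)-\eps I$ is positive definite. Completing the square in the Gaussian integral then gives
\[
\int_{B_\delta}e^{n\bar F}\le e^{n\bar F(\mathbf h;\td c)}(2\pi/n)^{3/2}\det(M)^{-1/2}e^{\frac n2\nabla\td\Delta^\top M^{-1}\nabla\td\Delta},
\]
which is exactly where $\Gamma_1$ enters.

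On $B_\delta^c$, use that $\td c$ is the unique global maximizer (\pref{lem:strict-concavity-f-transformed-basis}, \pref{lem:joint-concavity-basis-changed-coordinate-function}) together with the coercivity of \pref{prop:f-infinite-limits}. These give a gap $\td F\le\td F(\td c)-\eta$ on $B_\delta^c\cap\{\norm z\le R\}$, and quadratic decay for $\norm z>R$.

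\textbf{Lower bound on $D_n$.} Restrict to a ball of radius $n^{-1/2}$ (or $\delta$) around $(\beta m^*,0)$ and expand in the same way. Using only $-\nabla^2\td F+\eps I$ and the linear term, together with Jensen (the linear term averages to zero over a symmetric ball), gives $D_n\ge c\,n^{-1}e^{n\bar F(\mathbf h;\beta m^*,0,0)}$. This lower bound requires no tilt correction.

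\textbf{Main obstacle.} The hard step is the outer region for $N_n$, because there one must control the random deviation $\td\Delta$ uniformly. We need $\sup_{z\in B_\delta^c,\,\norm z\le R}|\td\Delta(\mathbf h;z)-\td\Delta(\mathbf h;\td c)|<\eta/2$ a.s. for large $n$. We would prove this by a uniform strong law: $\td\Delta$ is Lipschitz in $z$ with constant $O(\beta+\lambda)$ because the first derivatives of $\ln A$ are bounded, so a finite $\delta'$-net plus the SLLN at each net point suffices. For $\norm z>R$, the bound in the proof of \pref{prop:f-infinite-limits} gives $\bar F\le -\tfrac12\norm z^2_{\lambda}+O(1)$ a.s., so that contribution is $e^{-n\cdot\Omega(R^2)}$.

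Together, the outer part contributes at most $e^{n(\bar F(\td c)-\eta/2)}$, which is negligible relative to $n^{-3/2}e^{n\bar F(\td c)}$ for large $n$. Dividing the two bounds, the $\sqrt n$ prefactor cancels the leftover $n^{-1/2}$, and we get the stated bound with $C_{1,1}=C\det(M)^{-1/2}\sqrt{\lambda}$ (up to constants), valid for a.e. $\mathbf h$ and $n\ge n_0(\omega)$.
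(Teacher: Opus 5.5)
Your proposal is correct. For the numerator it follows the paper's proof closely:
\begin{itemize}
\item the same split into the ball $\norm{z}_2\le\delta$, the annulus $\delta\le\norm{z}_2\le R$, and the far field;
\item the same second-order Taylor bound, in which the random linear term $\nabla\td\Delta(\mathbf h;\td c)^\top z$ is completed into a Gaussian to produce $\Gamma_1$;
\item the same gap coming from uniqueness of the maximizer, and the same coercivity estimate.
\end{itemize}
Expanding $\td F$ and $\td\Delta$ separately instead of $\bar F$ at once is only cosmetic.

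You depart from the paper in two sub-steps.

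First, for uniform control of $\td\Delta$ on $B_R(\td c)$, you use a fixed finite net, the strong law at each net point, and the deterministic Lipschitz bound. The paper instead uses Gaussian Lipschitz concentration, a union bound over the net, and Borel--Cantelli. Your route is more elementary and is enough for an almost-sure statement. The paper's route gives an explicit exponential bound on the bad event, which it reuses later to pass from quenched to annealed moment bounds.

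Second, for the denominator, the paper completes the square. It must then show that the shifted centre $A_{\mathrm{den}}^{-1}\nabla\td\Delta_0(\mathbf h;\td c_0)$ lies within $\delta/2$ of the origin for large $n$ (its event $E_n^A$), and afterwards it discards the resulting factor $e^{n\Gamma_2/2}\ge 1$ anyway. Your symmetrization removes the tilt directly: either Jensen over a symmetric ball, or simply $\cosh\ge 1$ after pairing $z_0$ with $-z_0$. This makes $E_n^A$ unnecessary. With radius $n^{-1/2}$ it even gives $D_n\ge c\,n^{-1}e^{n\bar F(\mathbf h;\td c)}$ for every $n$ and every $\mathbf h$, using only a uniform bound on $\nabla^2\bar F$ and no Hessian convergence.

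One detail you leave implicit: to compare the far-field contribution $e^{-n\Omega(R^2)}$ with $n^{-3/2}e^{n\bar F(\mathbf h;\td c)}$, you need a lower bound on $\bar F(\mathbf h;\td c)$ that is uniform in $n$. The paper uses $\bar F(\mathbf h;\td c)\ge \ln 2-\beta^2(m^*)^2$, which works.
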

\begin{prf}

    Recall that $F$ is defined by~\pref{eq:empirical-averages-function}, $\bar F$ and $\td F$ is defined in~\pref{def:empirical-averages-deterministic-rate-function-u,v,w}, and $\td \Delta$ is defined in~\pref{def:laplace-deviation-function-u,v,w}. 
    
    Throughout the remainder of the proof, we fix a realization $\mathbf h=\mathbf h^t(\omega)$. To simplify notation, we suppress the dependence of $\bar F$ and $\td\Delta$ on $\mathbf h$, unless we wish to emphasize the chosen realization or replace $\mathbf h$ by the random field
$\mathbf h^t$.

    \ppart{Decomposition of neighborhoods} Let $z:=(z_u, z_v, z_w)=(u, v, w)- \td c$. We decompose the integral into $3$ neighborhoods:
    \[
        \left\{\norm{z}_2 \le \delta \right\} \cup \left\{\delta < \norm{z}_2 \le R\right\} \cup \left\{\norm{z}_2 > R\right\}\,.
    \]
    The contribution of the first region, which will be dominant, is estimated via a Gaussian integral after doing a Taylor expansion with remainder. The contribution from the second region is controlled almost-surely by combining the negativity of the maximum of $\td \Delta$ with tail bounds for the supremum of sub-Gaussian processes and the Borel-Cantelli lemma. The last term is shown to be small via coercivity. Therefore, we have
    \[
    \begin{aligned}
        \int_{\R^3}e^{nF(x,y,w)}dx\,dy\,dw &= \left|\det \frac{\partial(x,y,w)}{\partial(u,v,w)}\right|\int_{\R^3}e^{n \bar F(z + \td c)}dz \\
        &= 2\underbrace{\int_{\norm{z}_2 \le \delta}e^{n \bar F(z+ \td c)}dz }_{\mathsf{I}} + 2\underbrace{\int_{\delta \leq \norm{z}_2 \le R}e^{n\bar F(z+ \td c)}dz}_{\mathsf{II}} + 2\underbrace{\int_{\norm{z}_2 \geq R}e^{n\bar F(z+ \td c)}dz}_{\mathsf{III}}\,.
    \end{aligned}
    \]
    The lower bound on the denominator $\int_{\R^2} e^{nF(x,y,0)}dx\,dy$ follows rather easily once the numerator is estimated with similar arguments, and is delegated towards the end of the proof.

    \ppart{\texorpdfstring{$\mathsf{I}$}{I}: Bound via Taylor expansion with remainder}  We do Taylor's expansion to $\bar F(z+\td c)$ around critical point $\td c = (\beta m^*, 0, 0)$. Since $\bar F=\td F+\td\Delta$ and $\nabla\td F(\td c)=0$, we have,
    \[
    \begin{aligned}
        \bar F(z+\td c) &= \bar{F}(\td{c})+\nabla\td \Delta(\td c)^{\top} z +\frac{1}{2} z^{\top} \nabla^2 \bar{F}(\td{c}) z+\frac{1}{6} \nabla^3 \bar{F}(\kappa)[z, z, z] \\
        &= \bar{F}(\td{c})+\nabla\td \Delta(\td c)^{\top} z +\frac{1}{2} z^{\top} \nabla^2 \td{F}(\td{c}) z + \frac{1}{2} z^{\top} \nabla^2 \td{\Delta}(\td{c}) z +\frac{1}{6} \nabla^3 \bar{F}(\kappa)[z, z, z],
    \end{aligned}
    \]
    for some point $\kappa$ between $\td{c}$ and $\td{c} + z$.
    
    By~\pref{cor:neg-def-hessian-ibp}, we have 
  \[
  \lambda_*:=\lambda_{\min}\left(-\nabla^2\td F(\td c)\right)>0.
  \]
  Choose any $\varepsilon\in(0,\lambda_*)$, for example, $\varepsilon=\lambda_*/2$ and define
\[
A_{\mathrm{num}}:=-\nabla^2\td F(\td c)-\varepsilon I_3 \succ0.
\]
To control the third-order remainder in the Taylor expansion, we use the uniform bound for third-derivative of $\bar F$ established in~\pref{prop:uniform-bound-f-third-derivative}. Viewing $\nabla^3\bar F(\kappa)$ as a third-order tensor and applying the tensor version of the Cauchy--Schwarz inequality, for any $j ,k ,l \in \{u,v,w\}$,
\[
\begin{aligned}
\left|\nabla^3\bar F(\kappa)[z,z,z]\right|&=\left|\sum_{j,k,l \in \{u, v,w\}}\partial_{j,k,l}\bar F(\kappa)\,z_jz_kz_l\right|\\
&\leq\left(\sum_{j,k,l \in \{u, v,w\}}\left|\partial_{j,k,l}\bar F(\kappa)\right|^2\right)^{1/2}\left(\sum_{j,k,l \in \{u, v,w\}}z_j^2z_k^2z_l^2\right)^{1/2}\\
&\leq \left(\sum_{j,k,l \in \{u, v,w\}} C_{\beta, \lambda}^2\right)^{1/2}\|z\|_2^3 = 3\sqrt{3} C_{\beta, \lambda} \|z\|_2^3.\end{aligned}
\]
Then, there exists a constant $M := 3\sqrt{3}C_{\beta, \lambda} <\infty$ such that $\left|\nabla^3\bar F(\kappa)[z,z,z]\right|\leq M\|z\|_2^3$. Choose $\delta>0$ sufficiently small so that $M\delta/6 \leq \varepsilon/4$, for example, we may take $\delta = 3\varepsilon/(2M)$. Then, whenever $\|z\|_2\leq\delta$,
\[
\frac16 \left|\nabla ^3\bar F(\kappa)[z,z,z]\right|\leq\frac{M}{6}\|z\|_2^3\leq\frac{\varepsilon}{4}\|z\|_2^2.
\]

By~\pref{lem:as-convergence-hessian-rate-function}, for almost every $\omega$, there exists $n_2(\omega)>0$ such that,
as $n\geq n_2(\omega)$,
\[
\max_{a,b\in\{u,v,w\}}\left|\partial_{a,b}\td\Delta(\mathbf h; \td c)\right|\leq\frac{\varepsilon}{6}.
\]
Consequently,
\[
\begin{aligned}
\left|z^\top\nabla^2\td\Delta(\td c)z\right|&\leq_{\text{Cauchy--Schwarz}}\|z\|_2\left\|\nabla^2\td\Delta(\td c)z\right\|_2 \leq\left\|\nabla^2\td\Delta(\td c)\right\|_{\mathrm{op}}\|z\|_2^2\\
&\leq\left\|\nabla^2\td\Delta(\td c)\right\|_{\mathrm F}\|z\|_2^2=\left(\sum_{a,b\in\{u,v,w\}}\left|\partial_{a,b}\td\Delta(\td c)\right|^2\right)^{1/2}\|z\|_2^2\\
&\leq3\max_{a,b\in\{u,v,w\}}\left|\partial_{a,b}\td\Delta( \td c)\right|\|z\|_2^2\leq\frac{\varepsilon}{2}\|z\|_2^2.
\end{aligned}
\]
Therefore, when $\|z\|_2\leq\delta$ and $n\geq n_2(\omega)$,
\[
\begin{aligned}
\bar F(\td c+z)&=\bar F(\td c)+\nabla\td\Delta(\td c)^\top z+\frac12z^\top\nabla^2\td F(\td c)z+\frac12z^\top\nabla^2\td\Delta(\td c)z+\frac16\nabla^3\bar F(\kappa)[z,z,z]\\
&\leq\bar F(\td c)+\nabla\td\Delta(\td c)^\top z-\frac12z^\top\left(-\nabla^2\td F(\td c)\right)z+
\frac{\varepsilon}{4}\|z\|_2^2+\frac{\varepsilon}{4}\|z\|_2^2 \\
&\leq\bar F(\td c)+\nabla\td\Delta(\td c)^\top z-\frac12z^\top A_{\mathrm{num}}z.
\end{aligned}
\]
Then for almost every $\omega$, as $n\geq n_2(\omega)$, the contribution from the first region therefore satisfies
\allowdisplaybreaks
\begin{align*}
\mathsf{I} &=\int_{\|z\|_2\leq\delta}\exp\left\{n\bar F(\mathbf h;\td c+z)\right\}\,dz \leq e^{n\bar F(\mathbf h;\td c)}\int_{\|z\|_2\leq\delta}\exp\left\{n\left(\nabla\td\Delta(\mathbf h; \td c)^\top z-\frac12z^\top A_{\mathrm{num}}z\right)\right\}\,dz \\ 
&=e^{n\bar F(\mathbf h;\td c)}\int_{\|z\|_2\leq\delta}\exp\left\{-\frac n2\left(z-A_{\mathrm{num}}^{-1}\nabla\td\Delta(\mathbf h; \td c)\right)^\top A_{\mathrm{num}} \left(z-A_{\mathrm{num}}^{-1}\nabla\td\Delta(\mathbf h; \td c)\right)+\frac n2\nabla\td\Delta(\mathbf h; \td c)^\top A_{\mathrm{num}}^{-1}\nabla\td\Delta(\mathbf h; \td c)\right\}\,dz \\
&\leq \exp\left\{n\bar F(\mathbf h;\td c)+\frac n2\Gamma_1(\mathbf h; \td c)\right\} \int_{\R^3} \exp\left\{-\frac{n}{2}y^\top A_{\mathrm{num}} y\right\} \,dy = \left(\frac{2\pi}{n}\right)^{3/2}\frac{1}{\sqrt{\det A_{\mathrm{num}}}}\exp\left\{n\bar F(\mathbf h;\td c)+\frac n2\Gamma_1(\mathbf h; \td c)\right\}.
\end{align*}
    
    \ppart{\texorpdfstring{$\mathsf{II}$}{II}: Upper bound via sub-Gaussian concentration and Borel-Cantelli} The second integral is handled by proving that $\td F(\td c)-\sup _{\delta \leq\|z\|_2 \leq R}\td F(\td c+z)$ can be controlled by combining the ``gap'' in the function value far away from the global maximizer with tail bounds for sub-Gaussian processes.
    
    Define
\[
\gamma_{\delta,R}
:=
\td F(\td c)
-
\sup_{\delta\leq\norm{z}_2\leq R}
\td F(\td c+z)
>0.
\]
Indeed, since $\td F$ is continuous, it reaches its maximum on the compact set $\left\{ z\in\R^3: \delta\leq\norm{z}_2\leq R \right\}$. Since $\td c$ is the unique global maximizer of $\td F$, this maximum is strictly smaller than $\td F(\td c)$. Therefore, $\gamma_{\delta,R}>0$.

Let $A(h^t;z)$ be as defined in~\pref{eq:function-of-u,v,w,h^t}, and recall $h_1^t,\ldots,h_n^t \stackrel{\mathrm{i.i.d.}}{\sim} \mathcal N\left(t,\beta^2 q^* + t\right)$. We next show that $\td\Delta(\mathbf h;\cdot)$ is Lipschitz in $z$, and $A(\cdot;z)$ is Lipschitz in $h$ for any fixed $z$. Define $L:=2\sqrt{2\beta^2+\lambda^2}$. Mean-value theorem gives for each $i\in[n]$,
\[
\left|\ln A(h_i;z) - \ln A(h_i;z^\prime)\right| \leq \left\|\nabla_{z} \ln A(h_i;z)\right\|_2  \norm{z-z^\prime}_2 \leq \sqrt{(2\beta)^2+(2\beta)^2+(2\lambda)^2} \norm{z-z^\prime}_2 = L \norm{z-z^\prime}_2.
\]
Hence,
\[
\left|\td\Delta(\mathbf h;z)-\td\Delta(\mathbf h;z^\prime)\right|\leq\frac1n\sum_{i=1}^n\left|\ln A(h_i;z)-\ln A(h_i;z^\prime)\right|+\E_{h^t}\left[\left|\ln A(h^t;z)-\ln A(h^t;z^\prime)\right|\right]\leq 2L\norm{z-z^\prime}_2.
\]
Thus, $\td\Delta(\mathbf h;\cdot)$ is $2L$-Lipschitz. Fix $z$, then
\[
\left|\partial_h\ln A(h;z)\right|=\left|\frac{2e^{2\lambda w}\sinh(2\beta u+2h)}{e^{2\lambda w}\cosh(2\beta u+2h)+e^{-2\lambda w}\cosh(2\beta v)}\right|\leq 2.
\]
Therefore, $\ln A(\cdot; z)$ is 2-Lipschitz. As $\beta^2 q^* + t \neq 0$, the concentration inequality for Lipschitz
functions of Gaussian random variables
\cite[Theorem~5.3.1]{vershynin2018high} then implies that, for every $a>0$,
\[
\P\left(\left|\ln A(h_i^t;z)-\E_{h^t}\left[\ln A(h^t;z)\right]\right|\geq a\right)\leq2\exp\left\{-\frac{a^2}{8\left(\beta^2 q^* + t\right)}\right\}.
\]
Thus, each random variable $\ln A(h_i^t;z)-\E_{h^t}\left[\ln A(h^t;z)\right]$ is centered, independent, and sub-Gaussian. Since $\E_{\mathbf h^t} \left[\td\Delta(\mathbf h^t;z)\right]=0$, the sub-Gaussian tail bound for sums of independent random variables \cite[Corollary~2.11]{boucheron2012} gives
\[
\P\left(\left|\td\Delta(\mathbf h^t;z)\right|\geq a\right)\leq2\exp\left\{-\frac{a^2n}{8\left(\beta^2 q^* + t\right)}\right\}.
\]
We now consider the ball centered at $\td c$, $B_R(\td c):=\left\{\td c+z:\norm{z}_2\leq R\right\}$. To control the supremum at level $\gamma_{\delta,R}/4$, choose the radius of the net to be $\eta:=\gamma_{\delta,R}/(16L)$. There exists an $\eta$-net $\mathcal N_\eta$ of $B_R(\td c)$
satisfying
\[
\left|\mathcal N_\eta\right|\leq\left(1+\frac{2R}{\eta}\right)^3=\left(1+\frac{32RL}{\gamma_{\delta,R}}\right)^3.
\]
By the property of $\eta$-net, for every $z\in B_R(\td c)$, there exists $z^\prime\in\mathcal N_\eta$ such that $\norm{z-z^\prime}_2\leq\eta$. Since $\td\Delta(\mathbf h^t;\cdot)$ is $2L$-Lipschitz,
\[
\left|\td\Delta(\mathbf h^t;z)\right|\leq\left|\td\Delta(\mathbf h^t;z^\prime)\right|+2L\eta=\left|\td\Delta(\mathbf h^t;z^\prime)\right|+\frac{\gamma_{\delta,R}}{8}.
\]
Consequently, the following inclusion of events holds:
\[
\left\{\omega:\sup_{z\in B_R(\td c)}\left|\td\Delta(\mathbf h^t(\omega);z)\right|>\frac{\gamma_{\delta,R}}{4}\right\}\subseteq\bigcup_{z^\prime\in\mathcal N_\eta}\left\{\omega:\left|\td\Delta(\mathbf h^t(\omega);z^\prime)\right|>\frac{\gamma_{\delta,R}}{8}\right\}.
\]
Therefore, by the union bound and the preceding result of sub-Gaussianity,
\allowdisplaybreaks
\begin{align}\label{eq:summation-of-probabilities}
\sum_{n=1}^\infty\P\left(\sup_{z\in B_R(\td c)}\left|\td\Delta(\mathbf h^t;z)\right|>\frac{\gamma_{\delta,R}}{4}\right)&\leq\sum_{n=1}^\infty\sum_{z^\prime\in\mathcal N_\eta}\P\left(\left|\td\Delta(\mathbf h^t;z^\prime)\right|\geq\frac{\gamma_{\delta,R}}{8}\right)\\
&\leq2\sum_{n=1}^\infty\left(1+\frac{32RL}{\gamma_{\delta,R}}\right)^3\exp\left\{-\frac{n\gamma_{\delta,R}^2}{512\left(\beta^2 q^* + t\right)}\right\}<\infty.
\end{align} 
When $\beta^2 q^* + t = 0$, $\P\left(\left|\td\Delta(\mathbf h^t;z^\prime)\right|\geq a\right)=0$. This does not change the result in \pref{eq:summation-of-probabilities}.

By the Borel--Cantelli lemma, for almost every $\omega$, there exists a finite random integer $n_3(\omega)>0$ such that, for all $n\geq n_3(\omega)$,
\[
\sup_{z\in B_R(\td c)}\left|\td\Delta(\mathbf h^t(\omega);z)\right|\leq\frac{\gamma_{\delta,R}}{4}.
\]
Equivalently, for almost every realization $\mathbf h=\mathbf h^t(\omega)$, and all $n\geq n_3(\omega)$,
\[
\sup_{z\in B_R(\td c)}\left|\td\Delta(\mathbf h;z)\right|\leq\frac{\gamma_{\delta,R}}{4}.
\]

Finally, we control the contribution from the second region. For every
$z$ satisfying $\delta\leq\norm{z}_2\leq R$,
\[
\bar F(\td c+z)=\td F(\td c+z)+\td\Delta(\td c+z)\leq\td F(\td c)-\gamma_{\delta,R}+\sup_{z\in B_R(\td c)}\left|\td\Delta(z)\right|.
\]
On the other hand,
\[
\bar F(\td c)=\td F(\td c)+\td\Delta(\td c)\geq\td F(\td c)-\sup_{z\in B_R(\td c)}\left|\td\Delta(z)\right|.
\]
Combining the preceding two inequalities, for all $n\geq n_3(\omega)$,
\[
\bar F(\td c+z)\leq\bar F(\td c)-\gamma_{\delta,R}+2\sup_{z\in B_R(\td c)}\left|\td\Delta(z)\right|\leq\bar F(\td c)-\frac{\gamma_{\delta,R}}{2}.
\]
Therefore, for any $R>\delta$($\delta$ is chosen in bounding part $\mathrm I$), almost every realization $\mathbf h=\mathbf h^t(\omega)$, and every $n\geq n_3(\omega)$,
\[
\mathrm{II}=\int_{\delta\leq\norm{z}_2\leq R}e^{n\bar F(\mathbf h; \td c+z)}\,dz\leq\operatorname{Vol}\left(B_R(\td c)\right)\exp\left\{n\bar F(\mathbf h; \td c)-\frac{n\gamma_{\delta,R}}{2}\right\}=\frac{4\pi R^3}{3}\exp\left\{n\bar F(\mathbf h; \td c)-\frac{n\gamma_{\delta,R}}{2}\right\}.
\]
    
\ppart{\texorpdfstring{$\mathsf{III}$}{III}: Gaussian upper bound via coercivity} Recall the upper bound for $\bar F(u,v,w)$ established in~\pref{prop:f-infinite-limits}:
\[
\bar F(u,v,w)\leq-\frac{\lambda}{2}\left(u^2+v^2+w^2\right)+\ln 2+4\beta^2+2\lambda(1+q^*)^2+2\left(\frac1n\sum_{i=1}^n|h_i|\right).
\]
We have the following inequality: $\norm{\td c+z}_2^2 \geq \frac12\norm{z}_2^2-\norm{\td c}_2^2$. Indeed,
\allowdisplaybreaks
\begin{align*}
\norm{\td c+z}_2^2&\geq\norm{z}_2^2+\norm{\td c}_2^2-2\left|\left\langle\td c,z\right\rangle\right|=\norm{z}_2^2+\norm{\td c}_2^2-2\left|\left\langle\sqrt{2}\,\td c,\frac{z}{\sqrt{2}}\right\rangle\right|\\
&\geq_{\text{Cauchy–Schwarz}}\norm{z}_2^2+\norm{\td c}_2^2-\left(2\norm{\td c}_2^2+\frac12\norm{z}_2^2\right)=\frac12\norm{z}_2^2-\norm{\td c}_2^2.
\end{align*}
Substituting this inequality into the preceding upper bound and using $(u,v,w)=\td c+z$, we obtain
\allowdisplaybreaks
\begin{align*}
\bar F(\td c+z)&\leq-\frac{\lambda}{4}\norm{z}_2^2+\frac{\lambda}{2}\norm{\td c}_2^2+\ln 2+4\beta^2+2\lambda(1+q^*)^2+2\left(\frac1n\sum_{i=1}^n|h_i|\right)\\
&=-\frac{\lambda}{4}\norm{z}_2^2+\frac{\lambda}{2}\beta^2(m^*)^2+\ln 2+4\beta^2+2\lambda(1+q^*)^2+2\left(\frac1n\sum_{i=1}^n|h_i|\right).
\end{align*}
As in the proof of~\pref{prop:f-infinite-limits}, the strong law of large numbers implies that, for almost every fixed realization $\mathbf h = \mathbf h^t(\omega)$, there exists $n_1(\omega)$ such that for every $n\geq n_1(\omega)$, $\frac1n\sum_{i=1}^n|h_i|\leq\E_{\mathbf h^t}|h_1^t|+1$. Define
\[
C_0(\beta, \lambda):=2\left(\E_{\mathbf h^t}|h_1^t|+1\right)+\frac{\lambda}{2}\beta^2(m^*)^2+\ln 2+4\beta^2+2\lambda(1+q^*)^2,
\]
then for every $n\geq n_1(\omega)$, $\bar F(\mathbf h; \td c+z)\leq-\frac{\lambda}{4}\norm{z}_2^2+C_0(\beta, \lambda)$. Choose $R>\delta$ sufficiently large so that
\[
\gamma_R:=\frac{\lambda R^2}{4}-C_0(\beta, \lambda)>0.
\]
Then for almost every realization $\mathbf h= \mathbf h^t(\omega)$, as $n\geq n_1(\omega)$,
\allowdisplaybreaks
\begin{align*}
\mathrm{III}&=\int_{\norm{z}_2>R}e^{n\bar F(\mathbf h; \td c+z)}\,dz\leq e^{nC_0(\beta, \lambda)}\int_{\norm{z}_2>R}\exp\left\{-\frac{n\lambda}{4}\norm{z}_2^2\right\}\,dz\\
&=4\pi e^{nC_0(\beta, \lambda)}\int_R^\infty r^2\exp\left\{-\frac{n\lambda}{4}r^2\right\}\,dr \leq 4\pi e^{nC_0(\beta, \lambda)} e^{-\frac{n\lambda R^2}{4}}\left(\frac{2R}{n \lambda}+\frac{4}{n^2\lambda^2R}\right) \\
&= 4\pi e^{-\gamma_R n} \left(\frac{2R}{n \lambda}+\frac{4}{n^2\lambda^2R}\right)
\end{align*}

\ppart{Lower bounding the denominator} 
    
First, we transform the integral to the $(u,v)$-basis. Define $\td c_0 = (\beta m^*,0)$, $z_0:=(z_u,z_v)$, and
\[
\bar F_0(\mathbf h;u,v):=\bar F(\mathbf h;u,v,0), \quad\td F_0(u,v):=\td F(u,v,0), \quad \td\Delta_0(\mathbf h;u,v):=\td\Delta(\mathbf h;u,v,0).
\]
Then
\[
\int_{\R^2}e^{nF(\mathbf h;x,y,0)}\,dx\,dy=\left|\det\frac{\partial(x,y)}{\partial(u,v)}\right|\int_{\R^2}e^{n\bar F_0(\mathbf h;z_0+\td c_0)}\,dz_0=2\int_{\R^2}e^{n\bar F_0(\mathbf h;z_0+\td c_0)}\,dz_0.
\]
Similar to bounding integral $\mathrm I$, we do a Taylor expansion to $\bar F_0$ around the critical point $\td c_0$, and control the Hessian and third-order remainder terms. Since $\bar F_0=\td F_0+\td\Delta_0$ and $\nabla\td F_0(\td c_0)=0$, Taylor's theorem with remainder gives
\[
\begin{aligned}
\bar F_0(z_0+\td c_0)=&\bar F_0(\td c_0)+\nabla\td\Delta_0(\td c_0)^\top z_0+\frac12z_0^\top\nabla^2\td F_0(\td c_0)z_0\\
&\quad +\frac12z_0^\top\nabla^2\td\Delta_0(\td c_0)z_0+\frac16\nabla^3\bar F_0(\kappa_0)[z_0,z_0,z_0],
\end{aligned}
\]
where $\kappa_0$ lies between $\td c_0$ and $\td c_0+z_0$. By~\pref{cor:neg-def-hessian-ibp}, $-\nabla^2\td F_0(\td c_0)\succ0 $, then we choose the same $\varepsilon=\lambda_{\min}\left(-\nabla^2\td F(\td c)\right)/2$
as in part $\mathrm I$, and define
\[
A_{\mathrm{den}}:=-\nabla^2\td F_0(\td c_0)+\varepsilon I_2\succ0.
\]

For the third-order remainder, by \pref{prop:uniform-bound-f-third-derivative},
\allowdisplaybreaks
\begin{align*}
\left|\nabla^3\bar F_0(\kappa_0)[z_0,z_0,z_0]\right|&\leq\left(\sum_{j,k,l\in\{u,v\}}\left|\partial_{j,k,l}\bar F(\kappa_0,0)\right|^2\right)^{1/2}\left(\sum_{j,k,l\in\{u,v\}}z_j^2z_k^2z_l^2\right)^{1/2}\\
&\leq\left(\sum_{j,k,l\in\{u,v\}}C_{\beta, \lambda}^2\right)^{1/2}\|z_0\|_2^3=2\sqrt{2}\,C_{\beta, \lambda}\|z_0\|_2^3.
\end{align*}
For convenience, we use the same constant as in part $\mathrm I$, $M=3\sqrt{3}\,C_{\beta, \lambda}$, so that
\[
\left|\nabla^3\bar F_0(\kappa_0)[z_0,z_0,z_0]\right|\leq M\|z_0\|_2^3.
\]
We also choose the same radius $\delta=3\varepsilon/(2M)$, then whenever $\|z_0\|_2 \leq\delta$,
\[
\frac16\left|\nabla^3\bar F_0(\kappa_0)[z_0,z_0,z_0]\right|\leq\frac{M}{6}\|z_0\|_2^3\leq\frac{\varepsilon}{4}\|z_0\|_2^2.
\]
Recall the random integer $n_2(\omega)$ chosen in part $\mathrm{I}$. By~\pref{lem:as-convergence-hessian-rate-function}, for almost every $\omega$ and every $n\geq n_2(\omega)$,
\[
\max_{a,b\in\{u,v\}}\left|\partial_{a,b}\td\Delta_0(\mathbf h;\td c_0)\right|=\max_{a,b\in\{u,v\}}\left|\partial_{a,b}\td\Delta(\mathbf h;\td c)\right|\leq\max_{a,b\in\{u,v,w\}}\left|\partial_{a,b}\td\Delta(\mathbf h;\td c)\right|\leq\frac{\varepsilon}{6}.
\]
Therefore,
\[
\left|z_0^\top\nabla^2\td\Delta_0(\td c_0)z_0\right|\leq\left\|\nabla^2\td\Delta_0(\td c_0)\right\|_{\mathrm F}\|z_0\|_2^2\leq2\max_{a,b\in\{u,v\}}\left|\partial_{a,b}\td\Delta_0(\td c_0)\right|\|z_0\|_2^2\leq\frac{\varepsilon}{3}\|z_0\|_2^2\leq\frac{\varepsilon}{2}\|z_0\|_2^2.
\]
Therefore, whenever $\|z_0\|_2\leq\delta$ and $n\geq n_2(\omega)$,
\[
\begin{aligned}
\bar F_0(z_0+\td c_0)&\geq\bar F_0(\td c_0)+\nabla\td\Delta_0(\td c_0)^\top z_0+\frac12z_0^\top\nabla^2\td F_0(\td c_0)z_0-\frac{\varepsilon}{2}\|z_0\|_2^2\\
&=\bar F_0(\td c_0)+\left(\nabla\td\Delta_0(\td c_0)\right)^\top z_0-\frac12z_0^\top A_{\mathrm{den}}z_0.
\end{aligned}
\]
For any realization $\mathbf h$, we define $\Gamma_2(\mathbf h; \td c_0) := \nabla\td\Delta_0(\mathbf h; \td c_0)^\top(A_{\text{den}})^{-1} \nabla\td\Delta_0(\mathbf h; \td c_0)$. Restricting $z_0$ to the ball of radius $\delta$, we obtain
\allowdisplaybreaks
\begin{align*}
\int_{\mathbb R^2}e^{n\bar F_0(z_0+\td c_0)}\,dz_0&\geq \int_{\|z_0\|_2\leq\delta} e^{n\bar F_0(z_0+\td c_0)} \,dz_0\geq e^{n\bar F_0(\td c_0)} \int_{\|z_0\|_2\leq\delta}\exp\left\{n\left(\left(\nabla\td\Delta_0(\td c_0)\right)^\top z_0-\frac12z_0^\top A_{\mathrm{den}}z_0\right)\right\}\,dz_0 \\& = e^{n\bar F_0(\td c_0)+\frac n2\Gamma_2(\td c_0)} \int_{\|z_0\|_2\leq\delta}\exp\left\{-\frac n2\left(z_0-A_{\mathrm{den}}^{-1}\nabla\td\Delta_0(\td c_0)\right)^\top A_{\mathrm{den}}\left(z_0-A_{\mathrm{den}}^{-1}\nabla\td\Delta_0(\td c_0)\right)\right\}\,dz_0.
\end{align*}
By definition of $\td \Delta_0$,
\[
\nabla\td\Delta_0(\mathbf h^t;\td c_0)=
\left(\frac1n\sum_{i=1}^n\left(\partial_u\ln A(h_i^t;\td c)-\E_{h^t}\left[\partial_u\ln A(h^t;\td c)\right]\right),\frac1n\sum_{i=1}^n\left(\partial_v\ln A(h_i^t;\td c)-\E_{h^t}\left[\partial_v\ln A(h^t;\td c)\right]\right)\right).
\]
Since $A_{\mathrm{den}}^{-1}$ is a fixed matrix, the strong law of large numbers gives, for almost every realization $\mathbf h=\mathbf h^t(\omega)$,
\[
A_{\mathrm{den}}^{-1}\nabla\td\Delta_0(\mathbf h;\td c_0)\longrightarrow 0.
\]
Therefore, for almost every $\omega$, there exists $n_4(\omega)>0$ such that, for every $n\geq n_4(\omega)$,
\[
\left\|A_{\mathrm{den}}^{-1}\nabla\td\Delta_0(\mathbf h;\td c_0)\right\|_2\leq\frac{\delta}{2}.
\]
Let $y:=z_0-A_{\mathrm{den}}^{-1}\nabla\td\Delta_0(\mathbf h;\td c_0)$, when $n\geq n_4(\omega)$ and $\|y\|_2\leq\delta/2$,
\[
\left\|y+A_{\mathrm{den}}^{-1}\nabla\td\Delta_0(\mathbf h;\td c_0)\right\|_2\leq\|y\|_2+\left\|A_{\mathrm{den}}^{-1}\nabla\td\Delta_0(\mathbf h;\td c_0)\right\|_2\leq\delta,
\]
which implies
\[
\left\{y:\|y\|_2\leq\frac{\delta}{2}\right\}\subseteq\left\{y:\left\|y+A_{\mathrm{den}}^{-1}\nabla\td\Delta_0(\mathbf h;\td c_0)\right\|_2\leq\delta\right\}.
\]
Therefore
\allowdisplaybreaks
\begin{align*}
& \quad \int_{\|z_0\|_2\leq\delta}\exp\left\{-\frac n2\left(z_0-A_{\mathrm{den}}^{-1}\nabla\td\Delta_0(\td c_0)\right)^\top A_{\mathrm{den}}\left(z_0-A_{\mathrm{den}}^{-1}\nabla\td\Delta_0(\td c_0)\right)\right\}\,dz_0\\
& \geq\int_{\|y\|_2\leq\delta/2}\exp\left\{-\frac n2y^\top A_{\mathrm{den}}y\right\}\,dy =_{\xi=\sqrt n\,A_{\mathrm{den}}^{1/2}y} \frac{1}{n\sqrt{\det A_{\mathrm{den}}}}\int_{\left\|A_{\mathrm{den}}^{-1/2}\xi\right\|_2\leq\delta\sqrt n/2}e^{-\|\xi\|_2^2/2}\,d\xi \\
&\geq \frac{1}{n\sqrt{\det A_{\mathrm{den}}}}\int_{\|\xi\|_2\leq \delta \sqrt{n\lambda_{\min}(A_{\mathrm{den}})}/2}e^{-\|\xi\|_2^2/2}\,d\xi =\frac{2\pi}{n\sqrt{\det A_{\mathrm{den}}}}\left(1-\exp\left\{-\frac{n\delta^2\lambda_{\min}(A_{\mathrm{den}})}{8}\right\}\right)
\end{align*}

The last inequality follows since, as
\[
\|\xi\|_2\leq\frac{\delta\sqrt n}{2\|A_{\mathrm{den}}^{-1/2}\|_{\mathrm{op}}}=\frac{\delta}{2}\sqrt{n\lambda_{\min}(A_{\mathrm{den}})},
\]
we have
\[
\left\|A_{\mathrm{den}}^{-1/2}\xi\right\|_2\leq\|A_{\mathrm{den}}^{-1/2}\|_{\mathrm{op}}\|\xi\|_2\leq\frac{\delta\sqrt n}{2}.
\]
Since $A_{\mathrm{num}}\succ0$ and $A_{\mathrm{den}}\succ0$, their inverses are also positive definite. Consequently, for any realization $\mathbf h$, $\Gamma_1(\mathbf h;\td c)\geq0$, and $\Gamma_2(\mathbf h;\td c_0)\geq0$. Therefore, for almost every realization $\mathbf h=\mathbf h^t(\omega)$ and every $n\geq \max\{n_2(\omega),n_4(\omega)\}$, we obtain
\[
\begin{aligned}
\int_{\mathbb R^2}e^{nF(\mathbf h;x,y,0)}\,dx\,dy &\geq
\frac{4\pi}{n\sqrt{\det A_{\mathrm{den}}}}\exp\left\{n\bar F_0(\mathbf h;\td c_0)+\frac n2\Gamma_2(\mathbf h;\td c_0)\right\}\left(1-\exp\left\{-\frac{n\delta^2\lambda_{\min}(A_{\mathrm{den}})}{8}\right\}\right) \\
&\geq  \frac{4\pi}{n\sqrt{\det A_{\mathrm{den}}}}\exp\left\{n\bar F_0(\mathbf h;\td c_0)\right\}\left(1-\exp\left\{-\frac{\delta^2\lambda_{\min}(A_{\mathrm{den}})}{8}\right\}\right).
\end{aligned}
\]
    
    \ppart{Bounding the ratio} Summarizing the results above, we have for almost every realization $\mathbf h=\mathbf h^t(\omega)$ and every $n \geq \max\{n_1(\omega), n_2(\omega), n_3(\omega), n_4(\omega)\}$, 
    \begin{align*}
        &\quad \int_{(x,y,w)\in \R^3}\exp\left\{nF(\mathbf h; x,y,w)\right\}dx\,dy\,dw = 2( \mathsf{I} + \mathsf{II} + \mathsf{III}) \\
        &\leq 2\left(\frac{2\pi}{n}\right)^{3/2}\frac{\exp\left\{n \bar F(\mathbf h; \td c) +\frac{n}{2}\,\Gamma_1(\mathbf h;\td c)\right\}}{\sqrt{\det\left(A_{\text{num}}\right)}} + \frac{8\pi R^3}{3}\exp\left\{n\bar F(\mathbf h; \td c)-\frac{n\gamma_{\delta,R}}{2}\right\} + 8\pi e^{-\gamma_R n} \left(\frac{2R}{n \lambda}+\frac{4}{n^2\lambda^2R}\right) \\
        &= 2\left(\frac{2\pi}{n}\right)^{3/2}\frac{\exp\left\{n \bar F(\mathbf h; \td c) +\frac{n}{2}\,\Gamma_1(\mathbf h;\td c)\right\}}{\sqrt{\det\left(A_{\text{num}}\right)}}\left(1+ \frac{\sqrt{2}R^3 n^{3/2}\exp\left\{-\frac{n}{2}\left(\gamma_{\delta, R}+\Gamma_1(\mathbf h;\td c)\right)\right\}}{3 \sqrt{\pi}} \right. \\
        & \qquad\left.+ \frac{2^{3/2}\sqrt{n} R\exp\left\{-n\left(\gamma_R + \bar F(\mathbf h; \td c) + \frac{1}{2}\Gamma_1(\mathbf h;\td c)\right)\right\}}{\sqrt{\pi}\lambda}+\frac{2^{5/2}\exp\left\{-n\left(\gamma_R + \bar F(\mathbf h; \td c) + \frac{1}{2}\Gamma_1(\mathbf h;\td c)\right)\right\}}{\sqrt{\pi n } \lambda^2 R }\right) \\
        &\leq 2\left(\frac{2\pi}{n}\right)^{3/2}\frac{\exp\left\{n \bar F(\mathbf h; \td c) +\frac{n}{2}\,\Gamma_1(\mathbf h;\td c)\right\}}{\sqrt{\det\left(A_{\text{num}}\right)}}\left(1+ \frac{\sqrt{2}R^3 n^{3/2}e^{-n\gamma_{\delta, R}/2}}{3 \sqrt{\pi}} +  \frac{2^{3/2}\sqrt{n} Re^{-n\gamma_R}}{\sqrt{\pi}\lambda}+\frac{2^{5/2}e^{-n\gamma_R}}{\sqrt{\pi n } \lambda^2 R }\right).
     \end{align*}
    where the last inequality holds since for any realization $\mathbf h$, $\Gamma_1(\mathbf h; \td c)\geq 0$ and
    \allowdisplaybreaks
    \begin{align*}
        \bar F(\mathbf h; \td c) &= -\beta^2\left(m^*\right)^2 + \frac{1}{n} \sum_{i=1}^n \ln(\cosh(2\beta^2m^* + 2h_i)+1)\\
        &=-\beta^2\left(m^*\right)^2 - \ln2 +\frac{2}{n} \sum_{i=1}^n\ln \left(e^{\beta^2 m^*+h_i}+e^{-\beta^2 m^*-h_i}\right) \\
        &\geq -\frac14 -\ln2 +2\ln2 >0.
    \end{align*}
    Note that $\det(A_\text{num})$, $\det(A_\text{den})$, $R$, $\gamma_{\delta, R}$ and $\gamma_R$ are positive constants depending only on $\beta$ and $\lambda$.
    Since the function $x^\alpha e^{-\beta x}$ attains its maximum at $x=\alpha/\beta$, where $\alpha, \beta>0$ and $x \in \mathbb{R}$, we have
    \[
        n^{3/2}e^{-n\gamma_{\delta, R}/2} \leq \sup _{x \geq 0} x^{3 / 2} e^{-\gamma_{\delta, R} x / 2}= \left(\frac{3}{e \gamma_{\delta, R}}\right)^{3 / 2}:=C_{1, 2} , \text{ and }\sqrt{n} e^{-n\gamma_R} \leq \sup _{x \geq 0} x^{1 / 2} e^{-\gamma_R x}=\left(\frac{1}{2 e \gamma_R}\right)^{1 / 2}:= C_{1, 3},
    \]
    where $C_{1, 2}$ and $C_{1, 3}$ are also positive constants depending only on $\beta$ and $\lambda$. Then
    \[
    \begin{aligned}
        \int_{(x,y,w)\in \R^3}\exp\left\{nF(\mathbf h; x,y,w)\right\}dx\,dy\,dw &\leq 2\left(\frac{2\pi}{n}\right)^{3/2}\frac{\exp\left\{n \bar F(\mathbf h; \td c) +\frac{n}{2}\,\Gamma_1(\mathbf h;\td c)\right\}}{\sqrt{\det\left(A_{\text{num}}\right)}} \\
        & \qquad \qquad  \left(1+\frac{\sqrt{2}R^3 C_{1, 2}(\beta, \lambda)}{3\sqrt{\pi}}+\frac{2^{3/2}R C_{1, 3}(\beta, \lambda)}{\sqrt{\pi} \lambda} + \frac{2^{5/2}e^{-\gamma_R}}{\sqrt{\pi}\lambda^2R}\right).
    \end{aligned} 
    \]
    Now, by~\pref{lem:hs-variance-mgf} and the bound for the upper numerator above combined with the lower bound for the denominator, for almost every realization $\mathbf h=\mathbf h^t(\omega)$ and every $n \geq \max\{n_1(\omega), n_2(\omega), n_3(\omega), n_4(\omega)\}$, 
    \begin{align*}
        \an{e^{\lambda n (R_{\sigma,\tau}-q^*)^2}}_\mathbf h &= \sqrt{\frac{\lambda n}{\pi}}\frac{\int_{(x,y,w)\in\R^3}e^{F(\mathbf h; x,y,w)n}dx\,dy\,dw}{\int_{(x,y)\in\R^2}e^{F(\mathbf h;x,y,0)n}dx\,dy} \\
        &\leq \sqrt{\frac{2\lambda\det(A_{\mathrm{den}})}{\det(A_{\text{num}})}}\frac{\left(1+\frac{\sqrt{2}R^3 C_{1,2}}{3\sqrt{\pi}}+\frac{2^{3/2}R C_{1,3}}{\sqrt{\pi} \lambda} + \frac{2^{5/2}e^{-\gamma_R}}{\sqrt{\pi}\lambda^2R}\right)}{\left(1-\exp\left\{-\frac{\delta^2\lambda_{\min}(A_{\mathrm{den}})}{8}\right\}\right)}\exp\left\{n(\bar F(\mathbf h; \td c)-\bar F_0(\mathbf h; \td c_0)) + \frac{n}{2}\Gamma_1(\mathbf h; \td c)\right\} \\
        &=_{\bar F(\mathbf h; \td c) = \bar F_0(\mathbf h; \td c_0)} C_{1,1}(\beta, \lambda) e^{n\Gamma_1(\mathbf h; \td c)/2},
    \end{align*}
    where 
    \[
    C_{1,1}(\beta, \lambda) := \sqrt{\frac{2\lambda\det(A_{\mathrm{den}})}{\det(A_{\text{num}})}}\frac{\left(1+\frac{\sqrt{2}R^3 C_{1,2}}{3\sqrt{\pi}}+\frac{2^{3/2}R C_{1,3}}{\sqrt{\pi} \lambda} + \frac{2^{5/2}e^{-\gamma_R}}{\sqrt{\pi}\lambda^2R}\right)}{\left(1-e^{-\delta^2\lambda_{\min}(A_{\mathrm{den}})/8}\right)} . \qedhere
    \]
\end{prf}

\begin{remark}[Tail-bound version of~\pref{thm:whp-overlap-concentration-laplace-cwrf}]
\label{rmk:tail-bound-version}

We retain all the definitions and notation from~\pref{thm:whp-overlap-concentration-laplace-cwrf}. Define the
good event
\begin{equation}\label{eq:definition-good-event}
    E_n:=E_n^{\mathrm{tail}}\cap E_n^H\cap E_n^{\mathrm{ann}}\cap E_n^A,
\end{equation}
where
\allowdisplaybreaks
\begin{align*}
&E_n^{\mathrm{tail}}:=\left\{\omega:\frac1n\sum_{i=1}^n\left|h_i^t(\omega)\right|\leq\E_{\mathbf h^t}\left[|h_1^t|\right]+1\right\},\\
&E_n^H:=\left\{\omega:\max_{a,b\in\{u,v,w\}}\left|\partial_{a,b}\td\Delta\left(\mathbf h^t(\omega);\td c\right)\right|\leq\frac{\varepsilon}{6}\right\},\\
&E_n^{\mathrm{ann}}:=\left\{\omega:\sup_{z\in B_R(\td c)}\left|\td\Delta\left(\mathbf h^t(\omega);z\right)\right|\leq\frac{\gamma_{\delta,R}}{4}\right\},\\
&E_n^A:=\left\{\omega:\left\|A_{\mathrm{den}}^{-1}\nabla\td\Delta_0\left(\mathbf h^t(\omega);\td c_0\right)\right\|_2\leq\frac{\delta}{2}\right\}.
\end{align*}
Thus, according to~\pref{thm:whp-overlap-concentration-laplace-cwrf}, the good event $E_n$ holds when $n \geq \max\{n_1(\omega), n_2(\omega), n_3(\omega), n_4(\omega)\}$, yielding the bound in~\pref{thm:whp-overlap-concentration-laplace-cwrf} for almost every realization $\mathbf h=\mathbf h^t(\omega)$:
\[
\left\langle e^{\lambda n(R_{\sigma,\tau}-q^*)^2}\right\rangle_{\mathbf h}\leq C_{1,1}(\beta,\lambda)\exp\left\{\frac n2\Gamma_1(\mathbf h;\td c)\right\}.
\]
\end{remark}

We next establish an exponential bound on the probability of the complement of the good event.

\begin{lemma}[Probability of the good event]\label{lem:probability-good-event}
Let $E_n$ be the good event defined in~\pref{eq:definition-good-event}. Then there exist positive constants $C_{1,4}(\beta, \lambda)$ and $C_{1,5}(\beta, \lambda)$, such that
\[
\P(E_n^c)\leq C_{1,4}(\beta, \lambda)e^{-C_{1,5}(\beta, \lambda)n}.
\]
\end{lemma}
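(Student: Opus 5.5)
By a union bound, $\P(E_n^c)\le \P((E_n^{\mathrm{tail}})^c)+\P((E_n^H)^c)+\P((E_n^{\mathrm{ann}})^c)+\P((E_n^A)^c)$. I will bound each of the four terms by $C e^{-cn}$, with constants depending only on $(\beta,\lambda)$ (and $t$). Then I take $C_{1,4}$ to be four times the largest prefactor and $C_{1,5}$ to be the smallest rate. The degenerate case $\beta^2q^*+t=0$ is trivial: every deviation vanishes identically, so each of these probabilities is $0$.

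\emph{Tail event.} The variables $|h_i^t|-\E|h_1^t|$ are centered. Since $x\mapsto|x|$ is $1$-Lipschitz, Gaussian Lipschitz concentration \cite[Theorem~5.3.1]{vershynin2018high} shows they are sub-Gaussian with variance proxy of order $\beta^2q^*+t$. Hoeffding's bound for sums of independent sub-Gaussians \cite[Corollary~2.11]{boucheron2012} at deviation level $1$ then gives $\P((E_n^{\mathrm{tail}})^c)\le 2\exp\{-n/(C(\beta^2q^*+t))\}$.

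\emph{Hessian and gradient events.} For each pair $a,b\in\{u,v,w\}$, $\partial_{a,b}\td\Delta(\mathbf h^t;\td c)$ is an average of i.i.d.\ centered summands. As computed in the proof of \pref{lem:as-convergence-hessian-rate-function}, these summands are bounded by $4C_aC_b$. Hoeffding's inequality gives $\P(|\partial_{a,b}\td\Delta|>\varepsilon/6)\le 2\exp\{-n\varepsilon^2/(C C_a^2C_b^2)\}$, and a union bound over the nine pairs handles $E_n^H$. For $E_n^A$ the argument is the same. The components of $\nabla\td\Delta_0(\mathbf h^t;\td c_0)$ are averages of centered summands bounded by $2C_a\le 4\max\{\beta,\lambda\}$, as in the proof of \pref{prop:stationary-point-rate-function-variance-mgf}. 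We have $\|A_{\mathrm{den}}^{-1}\nabla\td\Delta_0\|_2\le \lambda_{\min}(A_{\mathrm{den}})^{-1}\|\nabla\td\Delta_0\|_2$, so it suffices that each of the two components is at most $\delta\lambda_{\min}(A_{\mathrm{den}})/(2\sqrt2)$. Hoeffding's inequality plus a union bound then gives an exponential bound. The relevant quantities $\varepsilon$, $\delta$ and $A_{\mathrm{den}}$ depend only on $(\beta,\lambda)$.

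\emph{Annulus event.} This is exactly the estimate already carried out in part $\mathsf{II}$ of the proof of \pref{thm:whp-overlap-concentration-laplace-cwrf}. It uses the $\eta$-net with $\eta=\gamma_{\delta,R}/(16L)$, the $2L$-Lipschitz property of $\td\Delta(\mathbf h;\cdot)$, and the sub-Gaussian tail of $\td\Delta(\mathbf h^t;z')$ at each net point. Together these yield
\[
\P((E_n^{\mathrm{ann}})^c)\le 2\Bigl(1+\frac{32RL}{\gamma_{\delta,R}}\Bigr)^3\exp\Bigl\{-\frac{n\gamma_{\delta,R}^2}{512(\beta^2q^*+t)}\Bigr\},
\]
which I will reuse verbatim. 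There the estimate fed into Borel--Cantelli; here it is used directly as a nonasymptotic bound.

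\emph{Main obstacle.} The probabilistic content is routine. The only care needed is bookkeeping: $R$, $\delta$, $\varepsilon$, $\gamma_{\delta,R}$ and $A_{\mathrm{den}}$ must be fixed deterministically, depending only on $(\beta,\lambda)$, before $n$ appears. They must not depend on $\omega$ or $n$, so that the prefactors and rates are genuinely uniform in $n$. I would check this in the proof of \pref{thm:whp-overlap-concentration-laplace-cwrf}. The $R$ chosen in part $\mathsf{III}$ is defined through $\E|h_1^t|$, a constant, so it qualifies, and $\gamma_{\delta,R}>0$ follows from uniqueness of the maximizer.
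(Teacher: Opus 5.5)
Your proposal is correct and matches the paper's proof essentially step for step: a union bound over the four events, Gaussian Lipschitz concentration for $E_n^{\mathrm{tail}}$, Hoeffding's inequality plus a union bound over coordinates for $E_n^H$ and $E_n^A$ (the paper phrases the latter via $\|A_{\mathrm{den}}^{-1}\|_{\mathrm{op}}$, which equals your $\lambda_{\min}(A_{\mathrm{den}})^{-1}$), and verbatim reuse of the $\eta$-net estimate \eqref{eq:summation-of-probabilities} for $E_n^{\mathrm{ann}}$. Your remarks on the degenerate case $\beta^2q^*+t=0$, and on fixing $R,\delta,\varepsilon,\gamma_{\delta,R},A_{\mathrm{den}}$ independently of $n$ and $\omega$, are sound; they make explicit what the paper leaves implicit.
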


\begin{prf}
Since $h_1^t,\ldots,h_n^t \stackrel{\mathrm{i.i.d.}}{\sim} \mathcal N(t,\beta^2 q^* + t)$ and $|\cdot|$ is 1-Lipschitz we can use concentration for Lipschitz functions of sub-Gaussian random variables. Similar to part $\mathrm{II}$, the degenerate case does not change the result, so without loss of generality, we assume $\beta^2 q^* + t \neq 0$. By standard concentration of measure for Lipschitz functions of Gaussians, 
\[ 
\P\left(\left(E_n^{\mathrm{tail}}\right)^c\right) = \P\left(\frac1n\sum_{i=1}^n \left|h_i^t\right| - \E_{\mathbf h^t}|h_1^t|> 1 \right)\leq\exp\left\{-\frac{n}{2(\beta^2 q^* + t)}\right\}. 
\]

Next, we consider $E_n^H$. For any $a,b \in \{u,v,w\}$, previously result gives $\left|\partial_{a,b} \ln A(h_i^t; \td c)\right| \leq 8 \max\{\beta, \lambda\}^2$. In addition, $\partial_{a,b} \ln A(h_i^t; \td c)$ are independent random variables. Hoeffding's inequality therefore yields
\[
\begin{aligned}
    \P\left(\left(E_n^H\right)^c\right) &= \P\left(\max_{a,b\in \{u, v, w\}} \left|\partial_{a,b}\td\Delta\left(\mathbf h^t(\omega);\td c\right)\right|>\frac{\varepsilon}{6}\right) \leq\sum_{a,b\in \{u, v, w\}} \P\left(\left|\partial_{a,b}\td\Delta\left(\mathbf h^t(\omega);\td c\right)\right|\geq\frac{\varepsilon}{6}\right) \\
    &= \sum_{a,b\in \{u, v, w\}} \P\left(\left|\sum_{i=1}^n\partial_{a,b} \ln A(h_i^t; \td c) - \E_{\mathbf h^t}\left[\sum_{i=1}^n \partial_{a,b} \ln A(h_i^t; \td c)\right]\right|\geq\frac{n\varepsilon}{6}\right) \leq 18 \exp\left\{-\frac{n\varepsilon^2}{4608\max\{\beta, \lambda\}^4}\right\}
\end{aligned}
\]

For $E_n^{\mathrm{ann}}$, by $\pref{eq:summation-of-probabilities}$,
\[
\P\left(\left(E_n^{\mathrm{ann}}\right)^c\right)=\P\left(\sup_{z\in B_R(\td c)}\left|\td\Delta(\mathbf h^t;z)\right|>\frac{\gamma_{\delta,R}}{4}\right)\leq2\left(1+\frac{32RL}{\gamma_{\delta,R}}\right)^3\exp\left\{-\frac{n\gamma_{\delta,R}^2}{512(\beta^2 q^* + t)}\right\}.
\]

Finally, we consider $E_n^A$. By our definition above, 
\[
\begin{aligned}
    &\quad \nabla\td\Delta_0\left(\mathbf h^t;\td c_0\right) = \left(\left(\nabla\td\Delta_0\left(\mathbf h^t;\td c_0\right)\right)_u, \left(\nabla\td\Delta_0\left(\mathbf h^t;\td c_0\right)\right)_v\right) \\
    &= \left(\frac1n\sum_{i=1}^n\left(\partial_u\ln A(h_i^t;\td c)-\E_{h^t}\left[\partial_u\ln A(h^t;\td c)\right]\right),\frac1n\sum_{i=1}^n\left(\partial_v\ln A(h_i^t;\td c)-\E_{h^t}\left[\partial_v\ln A(h^t;\td c)\right]\right)\right).
\end{aligned}
\]
In addition, for every $a\in\{u,v\}$, $\left|\partial_a\ln A(h_i^t;\td c)\right|\leq 2\beta$, and $\partial_a\ln A(h_i^t;\td c)$ are independent random variables. Hence, for any $r>0$, Hoeffding's inequality gives
\[
\P\left(\left|\left(\nabla\td\Delta_0(\mathbf h^t;\td c_0)\right)_a\right|>r\right)\leq2\exp\left\{-\frac{nr^2}{8\beta^2}\right\}.
\]
Therefore
\[
\begin{aligned}
\P\left(\left(E_n^A\right)^c\right)&=\P\left(\left\|A_{\mathrm{den}}^{-1}\nabla\td\Delta_0(\mathbf h^t;\td c_0)\right\|_2>\frac{\delta}{2}\right)\leq\P\left(\left\|\nabla\td\Delta_0(\mathbf h^t;\td c_0)\right\|_2>\frac{\delta}{2\left\|A_{\mathrm{den}}^{-1}\right\|_{\mathrm{op}}}\right)\\
&\leq\sum_{a\in\{u,v\}}\P\left(\left|\left(\nabla\td\Delta_0(\mathbf h^t;\td c_0)\right)_a\right|>\frac{\delta}{2\sqrt2\,\left\|A_{\mathrm{den}}^{-1}\right\|_{\mathrm{op}}}\right)\leq 4\exp\left\{-\frac{n\delta^2}{64\beta^2\left\|A_{\mathrm{den}}^{-1}\right\|_{\mathrm{op}}^2}\right\},
\end{aligned}
\]
where in the penultimate inequality, we used the fact that if $\|(u,v)\|_2>r$, then $|x|$ or $|y|$ is greater than $r/\sqrt2$.

Since $E_n^c = \left(E_n^{\mathrm{tail}}\right)^c\cup\left(E_n^H\right)^c\cup\left(E_n^{\mathrm{ann}}\right)^c\cup\left(E_n^A\right)^c$, the union bound gives
\[
\begin{aligned}
    \P(E_n^c)&\leq\exp\left\{-\frac{n}{2(\beta^2 q^* + t)}\right\}+18\exp\left\{-\frac{n\varepsilon^2}{4608\max\{\beta,\lambda\}^4}\right\} \\
    &\qquad +2\left(1+\frac{32RL}{\gamma_{\delta,R}}\right)^3\exp\left\{-\frac{n\gamma_{\delta,R}^2}{512(\beta^2 q^* + t)}\right\}+4\exp\left\{-\frac{n\delta^2}{64\beta^2\left\|A_{\mathrm{den}}^{-1}\right\|_{\mathrm{op}}^2}\right\}.
\end{aligned}
\]
Consequently, we may take
\[
C_{1,4}(\beta, \lambda):=23+2\left(1+\frac{32RL}{\gamma_{\delta,R}}\right)^3
\]
and
\[
C_{1,5}(\beta, \lambda):=\min\left\{\frac{1}{2(\beta^2 q^* + t)},\frac{\varepsilon^2}{4608\max\{\beta,\lambda\}^4},\frac{\gamma_{\delta,R}^2}{512(\beta^2 q^* + t)},\frac{\delta^2}{64\beta^2\left\|A_{\mathrm{den}}^{-1}\right\|_{\mathrm{op}}^2}\right\}.
\]
Then $C_{1,4}(\beta, \lambda), C_{1,5}(\beta, \lambda)\in (0,+\infty)$, and
\[
\P(E_n^c)\leq C_{1,4}(\beta, \lambda)e^{-C_{1,5}(\beta, \lambda)n}. \qedhere
\]
\end{prf}

\paragraph{Boosting the random Laplace approximation to moment estimates} We can use the almost-sure (quenched) upper bound on the MGF of the overlap deviation function from \pref{thm:whp-overlap-concentration-laplace-cwrf} with control over the probability for the ``good event'' established in \pref{lem:probability-good-event} to bound the even moments of the overlap deviation.

\begin{theorem}[Even Moment Bounds for the Overlap Concentration]\label{thm:even-moment-bounds-overlap-concentration}
    For every $k \in \Z_{>0}$, $\beta \in (0, 1/2)$, and some constant $C_1\left(\beta,k\right)$,
    \[
    \underset{\mathbf{h}^t}{\E}\left[\left\langle\left(R_{\sigma, \tau}-q^*\right)^{2 k}\right\rangle_{\mathbf{h}^t}\right] \leqslant \frac{C_1\left(\beta, k\right)}{n^k}.
    \]
\end{theorem}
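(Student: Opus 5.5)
We follow the tail-integral representation \eqref{eq:expectation-and-integral}. We split the expectation over $\mathbf h^t$ according to the good event $E_n$ of \eqref{eq:definition-good-event}, treating $E_n$ and $E_n^c$ separately. Fix $\lambda\in(0,7/16)$, say $\lambda=1/4$.

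\emph{Contribution of $E_n^c$.} Trivially $|R_{\sigma,\tau}-q^*|\le 2$, so
\[
\E_{\mathbf h^t}\bigl[\an{(R_{\sigma,\tau}-q^*)^{2k}}_{\mathbf h^t}\mathbf 1_{E_n^c}\bigr]\le 4^k\P(E_n^c)\le 4^kC_{1,4}e^{-C_{1,5}n}.
\]
By \pref{lem:probability-good-event} this is at most $C(k)n^{-k}$, since $\sup_n n^ke^{-C_{1,5}n}<\infty$.

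\emph{Contribution of $E_n$.} On $E_n$, the deterministic part of the proof of \pref{thm:whp-overlap-concentration-laplace-cwrf} applies verbatim for every $n$. That proof used only the inequalities defining $E_n$; the large-$n$ thresholds $n_1,\dots,n_4$ served only to guarantee those inequalities. Hence on $E_n$ we have $\an{e^{\lambda n(R-q^*)^2}}_{\mathbf h}\le C_{1,1}e^{n\Gamma_1/2}$. Combining this with Markov's inequality,
\[
\an{\mathbf 1_{\{n(R-q^*)^2>s\}}}_{\mathbf h}\le C_{1,1}e^{-\lambda s}e^{n\Gamma_1(\mathbf h;\td c)/2}.
\]
Inserting this into \eqref{eq:expectation-and-integral} gives
\[
\E\bigl[\cdots\mathbf 1_{E_n}\bigr]\le \frac{k\,C_{1,1}}{n^k}\int_0^\infty s^{k-1}e^{-\lambda s}ds\;\E\bigl[e^{n\Gamma_1/2}\bigr]=\frac{C_{1,1}k!}{\lambda^k n^k}\,\E\bigl[e^{n\Gamma_1/2}\bigr].
\]

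\emph{Main obstacle: bounding $\E[e^{n\Gamma_1/2}]$ uniformly in $n$.} We have
\[
\Gamma_1\le\norm{A_{\mathrm{num}}^{-1}}_{\mathrm{op}}\norm{\nabla\td\Delta(\mathbf h;\td c)}_2^2,\qquad \norm{A_{\mathrm{num}}^{-1}}_{\mathrm{op}}=1/(\lambda_*-\eps)=2/\lambda_*.
\]
Each coordinate of $\nabla\td\Delta(\mathbf h;\td c)$ is an average of $n$ i.i.d.\ centered variables bounded by $C_a\in\{2\beta,2\lambda\}$, as in the proof of \pref{prop:stationary-point-rate-function-variance-mgf}. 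So $\sqrt n\,(\nabla\td\Delta)_a$ is sub-Gaussian with variance proxy $C_a^2$ by Hoeffding. This gives $\E e^{\theta n(\nabla\td\Delta)_a^2}\le(1-2\theta C_a^2)^{-1/2}$ for $\theta<1/(2C_a^2)$. By Hölder across the three coordinates, $\E e^{n\Gamma_1/2}$ is finite uniformly in $n$ provided $3\cdot\lambda_*^{-1}\cdot 4\max\{\beta,\lambda\}^2<1/2$ or similar.

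This constraint is the delicate point. Here $\lambda_*$ is a curvature constant, while the sub-Gaussian proxies are $O(\beta^2+\lambda^2)$. A better route uses the actual variances: at $\td c$ the $v$-component vanishes identically, and the $u,w$ components have variances bounded by $4\beta^2$ and $4\lambda^2$. We would then compare these with the diagonal entries of $-\nabla^2\td F(\td c)$, which are at least about $2-4\beta^2$ and $2\lambda-4\lambda^2$. These entries dominate when $\beta<1/2$ and $\lambda$ is small. If the needed inequality fails for a given $\lambda$, we may shrink $\lambda$ (and hence $\eps$) since only a constant depending on $(\beta,k)$ is required. Alternatively, we can replace $e^{n\Gamma_1/2}$ on $E_n$ by using $\Gamma_1\le C\norm{\nabla\td\Delta}^2$ together with the bound $n\norm{\nabla\td\Delta}^2\le$ (a random variable with exponential moments near zero order). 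Summing the two contributions yields $C_1(\beta,k)/n^k$.
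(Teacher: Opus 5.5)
There is a genuine gap, and it sits at the step you flag as the main obstacle. Your argument needs $\sup_n \E_{\mathbf h^t}\bigl[e^{n\Gamma_1(\mathbf h^t;\td c)/2}\bigr]<\infty$, and nothing in the proposal establishes it.

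\textbf{Why your route fails.} Since $w\mapsto \ln A$ is convex (it is a log-sum-exp), $-\partial_{w,w}\td F(\td c)\le 2\lambda$. Hence $\lambda_*\le 2\lambda$ and $\norm{A_{\mathrm{num}}^{-1}}_{\mathrm{op}}=2/\lambda_*\ge 1/\lambda$. On the other hand, the $u$-coordinate of $\sqrt n\,\nabla\td\Delta(\mathbf h^t;\td c)$ is a normalized sum of centered copies of $2\beta\tanh(\beta^2m^*+h_i)$, and its Hoeffding proxy $4\beta^2$ does not shrink with $\lambda$. Your stated sufficient condition is $24\max\{\beta,\lambda\}^2<\lambda_*$. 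It therefore forces $12\max\{\beta,\lambda\}^2<\lambda$, i.e.\ $\lambda<1/12$ and $\beta<1/12$. Shrinking $\lambda$ makes this worse, not better, because it shrinks $\lambda_*$.

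Your ``better route'' would need a matrix comparison between a sub-Gaussian proxy matrix for $\sqrt n\,\nabla\td\Delta$ and $A_{\mathrm{num}}=-\nabla^2\td F(\td c)-\eps I_3$. That comparison would have to include the $\eps$-shift and the off-diagonal $(u,w)$ entries. Variances alone do not control exponential moments of $n\norm{\nabla\td\Delta}^2$ uniformly in $n$. The factor $1/2$ in $e^{n\Gamma_1/2}$ is exactly the completing-the-square constant, so this is a borderline condition on constants, and the proposal leaves it unproved.

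\textbf{The missing idea.} You should never average $e^{n\Gamma_1/2}$ at all. Large values of $n\Gamma_1$ are absorbed by the trivial bound $\an{\mathbf 1_{\{n(R_{\sigma,\tau}-q^*)^2>s\}}}_{\mathbf h}\le 1$.

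The paper does this with a second, $s$-dependent split on $E_s=\{n\Gamma_1(\mathbf h^t;\td c)\le\theta s\}$, where $0<\theta<2\lambda$:
\begin{itemize}
\item On $E_n\cap E_s$, the Markov bound gives at most $C_{1,1}e^{-(\lambda-\theta/2)s}$.
\item On $E_s^c$, it uses the bound $1$ together with $\P(n\Gamma_1>\theta s)\le 6e^{-cs}$. This tail bound follows from Hoeffding applied coordinatewise to $\nabla\td\Delta(\mathbf h^t;\td c)$, with \emph{some} rate $c>0$, uniformly in $n$; no comparison of constants is needed.
\end{itemize}
Together these give $\E_{\mathbf h^t}\an{\mathbf 1_{\{n(R_{\sigma,\tau}-q^*)^2>s\}}}_{\mathbf h^t}\le Ce^{-c's}+\P(E_n^c)$. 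Integrating against $ks^{k-1}$ in \eqref{eq:expectation-and-integral} then yields $C_1(\beta,k)/n^k$.

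An equivalent repair of your own computation: on $E_n$, use $\an{\mathbf 1_{\{\cdot\}}}_{\mathbf h}\le\min\{1,C_{1,1}e^{-\lambda s+n\Gamma_1/2}\}\le C_{1,1}^{\alpha}e^{-\alpha\lambda s}e^{\alpha n\Gamma_1/2}$ for $\alpha\in(0,1]$. For $\alpha>0$ small enough, depending on $\norm{A_{\mathrm{num}}^{-1}}_{\mathrm{op}}$ and $\max\{\beta,\lambda\}$, your H\"older--Hoeffding estimate does give $\sup_n\E e^{\alpha n\Gamma_1/2}<\infty$. The $s$-integral then becomes $C_{1,1}^{\alpha}k!/(\alpha\lambda)^k$.

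The rest of your proposal is fine. This includes the $E_n^c$ term bounded by $4^k\P(E_n^c)$, which is slightly simpler than the paper's integration up to $s=4n$, and the use of the Laplace bound on $E_n$ for every $n$.
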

\begin{prf}
For any $s>0$, similar to deriving the upper bound for the probability of the event $\left(E_n^A\right)^c$ in~\pref{lem:probability-good-event},
\allowdisplaybreaks
\begin{align*}
    \P(n\Gamma_1(\mathbf h^t; \td c) > s) &= \P\left(\left(\sqrt{n}\nabla \td \Delta(\mathbf h^t; \td c)\right)^\top A_{\text{num}}^{-1} \left(\sqrt{n} \nabla \td \Delta(\mathbf h^t; \td c)\right)>s\right) \leq \P\left(\left\|\sqrt{n} \nabla \td \Delta(\mathbf h^t; \td c)\right\|_2^2 > \frac{s}{\left\|A^{-1}_{\text{num}}\right\|_{\text{op}}}\right) \\
    &\leq \sum_{a \in \{u,v,w\}}\P\left(n\left(\nabla \td \Delta(\mathbf h^t; \td c)\right)_a^2 > \frac{s}{3\left\|A^{-1}_{\text{num}}\right\|_{\text{op}}}\right) = \sum_{a \in \{u,v,w\}}\P\left(\sqrt{n}\left|\left(\nabla \td \Delta(\mathbf h^t; \td c)\right)_a\right| > \sqrt{\frac{s}{3\left\|A^{-1}_{\text{num}}\right\|_{\text{op}}}}\right) \\
    & = \sum_{a \in \{u,v,w\}}\P\left(\left|\sum_{i=1}^n \partial_a \ln A(h_i^t; \td c) - \E_{\mathbf h^t} \left[\sum_{i=1}^n \partial_a \ln A(h^t_i;\td c)\right] \right|>\sqrt{\frac{sn}{3\left\|A^{-1}_{\text{num}}\right\|_{\text{op}}}}\right)     \\
    &\leq_{\text{Hoeffding}}3 \cdot 2 \exp\left\{-\frac{2sn}{3\left\|A^{-1}_{\text{num}}\right\|_{\text{op}} n 16 \max\{\beta, \lambda\}^2}\right\} = 6\exp\left\{-\frac{s}{24\max\{\beta, \lambda\}^2\left\|A^{-1}_{\text{num}}\right\|_{\text{op}}}\right\} .
\end{align*}
Fix $\lambda_1$ as any number between 0 and 7/16, for example, $\lambda_1 = 1/10$. Define event $E_s:=\{n\Gamma_1(\mathbf h^t; \td c) \leq \theta s\}$ for some constant $\theta \in (0, 2\lambda_1)$, and let good event be as defined in~\pref{eq:definition-good-event}, then for any $\beta \in (0,1/2)$,
\allowdisplaybreaks
\begin{align*}
\E_{\mathbf h^t} \left[\left\langle \mathbf 1_{\{n\left(R_{\sigma,\tau}-q^*\right)^2>s\}}\right\rangle_{\mathbf h^t}\right]
&= \E_{\mathbf h^t}\left[\mathbf 1_{E_n\cap E_s}\left\langle\mathbf 1_{\{n\left(R_{\sigma,\tau}-q^*\right)^2>s\}}\right\rangle_{\mathbf h^t}\right]+\E_{\mathbf h^t}\left[\mathbf 1_{E_n\cap E_s^c}\left\langle\mathbf 1_{\{n\left(R_{\sigma,\tau}-q^*\right)^2>s\}}\right\rangle_{\mathbf h^t}\right] \\
&\qquad \qquad +\E_{\mathbf h^t}\left[\mathbf 1_{E_n^c}\left\langle\mathbf 1_{\{n\left(R_{\sigma,\tau}-q^*\right)^2>s\}}\right\rangle_{\mathbf h^t}\right]\\
&\leq e^{-\lambda_1 s}\E_{\mathbf h^t}\left[\mathbf 1_{E_s} \mathbf 1_{E_n}\left\langle e^{\lambda_1 n\left(R_{\sigma,\tau}-q^*\right)^2}\right\rangle_{\mathbf h^t}\right]+ \P(E_s^c) + \P(E_n^c) \\
& \leq C_{1,1}(\beta, \lambda_1)e^{-\lambda_1 s}\E_{\mathbf h^t}\left[\mathbf 1_{E_s} e^{n \Gamma_1(\mathbf h^t; \td c)/2}\right] + \P(E_s^c) + \P(E_n^c) \\
& \leq C_{1,1}(\beta, \lambda_1)e^{(\theta/2-\lambda_1) s} + 6e^{-\theta s/\left(24\max\{\beta, \lambda_1\}\left\|A^{-1}_{\text{num}}\right\|_{\text{op}}\right)} + C_{1,4}(\beta, \lambda_1, q^*)e^{-C_{1,5}(\beta, \lambda_1)n} \\
& \leq (C_{1,1}(\beta, \lambda_1) + 6) e^{-C_{1,6}(\beta, \lambda_1)s} + C_{1,4}(\beta, \lambda_1)e^{-C_{1,5}(\beta, \lambda_1)n},
\end{align*}
where 
\[
C_{1,6}(\beta, \lambda_1):= \min\left\{\left(\lambda_1 - \frac{\theta}{2}\right), \frac{\theta}{24\max\{\beta, \lambda_1\}^2\left\|A^{-1}_{\text{num}}\right\|_{\text{op}}}\right\}.
\]
Therefore, by~\pref{eq:expectation-and-integral},
\allowdisplaybreaks
\begin{align*}
    \E_{\mathbf h^t}\left[\left\langle\left(R_{\sigma,\tau}-q^*\right)^{2k}\right\rangle_{\mathbf h^t}\right]&=\frac{k}{n^k}\int_0^\infty s^{k-1}\E_{\mathbf h^t}\left[\left\langle\mathbf 1_{\left\{n\left(R_{\sigma,\tau}-q^*\right)^2>s\right\}}\right\rangle_{\mathbf h^t}\right]\,ds  \\
    &\leq \frac{k}{n^k}\left( (C_{1,1}(\beta, \lambda_1) + 6)\int_0^\infty s^{k-1} e^{-C_{1,6}(\beta, \lambda_1)s} \,ds + C_{1,4}(\beta, \lambda_1)e^{-C_{1,5}(\beta, \lambda_1)n} \int_0^{4n} s^{k-1} \, ds \right) \\
    & =\frac{1}{n^k} \left((C_{1,1}(\beta, \lambda_1) + 6) \frac{k!}{C_{1,6}(\beta, \lambda_1)^k} + C_{1,4}(\beta, \lambda_1)e^{-C_{1,5}(\beta, \lambda_1)n}(4n)^k\right) \\
    &\leq \frac{1}{n^k} \left((C_{1,1}(\beta, \lambda_1) + 6) \frac{k!}{C_{1,6}(\beta, \lambda_1)^k} + C_{1,4}(\beta, \lambda_1)\left(\frac{4k}{eC_{1,5}(\beta, \lambda_1)}\right)^k\right) \\
    &= \frac{C_1(\beta, k)}{n^k},
\end{align*}
where $C_1(\beta, k)$ is defined by
\[
C_1(\beta, k) = (C_{1,1}(\beta, \lambda_1) + 6) \frac{k!}{C_{1,6}(\beta, \lambda_1)^k} + C_{1,4}(\beta, \lambda_1)\left(\frac{4k}{eC_{1,5}(\beta, \lambda_1)}\right)^k,
\]
and the second integral in the first inequality only integrates up to $4n$ is because $n\left(R_{\sigma, \tau}-q^*\right)^2 \leq n\left(1+q^*\right)^2 \leq 4 n$. \qedhere
\end{prf}

Importantly, we do not expect the constant $C_1(\beta,k)$ to be tightened further in a way that drops dependence on $k$, since that would indicate stronger-than-sub-Gaussian tails for the overlap\footnote{\,This would imply that the overlap of two replicas does not behave like a sum of independent random variables at high-temperature, but more like a correlated set of random variables that \emph{actively} conspire to have more cancellations. Note that as $\beta \to 0$ the Gibbs measure should just resemble a random product measure on the hypercube, and so this does not seem plausible.}. The following corollary shows that in general, $R_{\sigma, \tau}$ is not equal to $q^*$, but the asymptotic expectation of $R_{\sigma, \tau}$ is $q^*$. 

\begin{corollary}[Asymptotic expectation]\label{cor:asymptotic-expectation}
$\E_{\mathbf h^t}\left[\an{R_{\sigma, \tau}}_{\mathbf h^t}\right] \neq q^*$ in general. As $n \to \infty$, $\E_{\mathbf h^t}\left[\an{R_{\sigma, \tau}}_{\mathbf h^t}\right] \to q^*$.
\end{corollary}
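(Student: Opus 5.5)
The corollary has two parts, and I will treat them separately.

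\textbf{Convergence.} This follows directly from \pref{thm:even-moment-bounds-overlap-concentration} with $k=1$. By Jensen's inequality, applied first to the Gibbs average and then to $\E_{\mathbf h^t}$,
\[
\left|\E_{\mathbf h^t}\left[\an{R_{\sigma,\tau}}_{\mathbf h^t}\right]-q^*\right|
\le \E_{\mathbf h^t}\left[\an{\left|R_{\sigma,\tau}-q^*\right|}_{\mathbf h^t}\right]
\le \left(\E_{\mathbf h^t}\left[\an{(R_{\sigma,\tau}-q^*)^2}_{\mathbf h^t}\right]\right)^{1/2}
\le \sqrt{\frac{C_1(\beta,1)}{n}} \to 0 .
\]
This holds for $\beta\in(0,1/2)$, which is the range in which the moment theorem has been proved so far. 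Once the extension to $\beta<1$ is available, the same argument carries over unchanged.

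\textbf{Non-equality in general.} For this part it suffices to exhibit one explicit finite case in which $\E_{\mathbf h^t}\an{R_{\sigma,\tau}}\neq q^*$. The simplest choice is $n=1$. There $R_{\sigma,\tau}=\sigma_1\tau_1$, and the Gibbs measure is $\mu(\sigma_1)\propto e^{\beta^2/2+h_1\sigma_1}$. The constant $e^{\beta^2/2}$ cancels, since $\sigma_1^2=1$. Because the replicas are independent, $\an{\sigma_1\tau_1}=\tanh^2(h_1)$, so
\[
\E\an{R}=\E\left[\tanh^2(h^t)\right].
\]
On the other hand, the fixed-point equation \eqref{eq:fixpoint-equations} together with \pref{lem:extend-nishimori-condition-beta-1} gives
\[
q^*=\E\left[\tanh^2(\beta^2 q^*+h^t)\right].
\]
These two expectations differ only through the shift $\beta^2q^*$ in the argument. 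When $t>0$ we have $q^*>0$ (since $m^*=q^*$ and $\varphi(0)>0$), so the shift is strictly positive. The map $r\mapsto \E[\tanh^2(r+h^t)]$ is strictly increasing for $r\ge0$ when $h^t$ has positive mean: pair $z$ with $-z$ as in the proof of \pref{lem:extend-nishimori-condition-beta-1}, and use that the derivative $\E[2\tanh(r+h)\,\mathrm{sech}^2(r+h)]$ is positive. Hence $\E[\tanh^2(h^t)]<q^*$, which gives the strict inequality.

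If a nontrivial $n$ is preferred over $n=1$, the alternative is a perturbative argument in $\beta$. At $\beta=0$, $\E\an{R}=\E\tanh^2(h^t)=q^*$ exactly. I would expand to first order in $\beta^2$ and show that the $O(\beta^2)$ correction carries an extra $1/n$ factor coming from the self-interaction term $\beta^2/(2n)$. This makes the finite-$n$ value differ from $q^*$ at order $\beta^2/n$.

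\textbf{Main obstacle.} I expect the main difficulty to be verifying strict monotonicity, or equivalently a nonzero correction term, rigorously. The sign argument via the symmetric pairing is the step I would check most carefully. If $t=0$, then $q^*=0$ and everything is degenerate, so the example should be taken with $t>0$.
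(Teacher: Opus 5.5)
Your proposal is correct and follows the paper's proof essentially step for step: Jensen plus the $k=1$ case of \pref{thm:even-moment-bounds-overlap-concentration} for the convergence, and the $n=1$ example $\an{R_{\sigma,\tau}}_h=\tanh^2 h$ combined with strict monotonicity of $x\mapsto\E_{g}[\tanh^2(x+\sqrt{\alpha}\,g)]$ on $x>0$ for the non-equality (your condition $t>0$ is a concrete sufficient condition for the paper's $m^*\neq 0$). The one point to make precise is the sign step: pair $s$ with $-s$ for the variable $r+h^t\sim\mathcal N(r+t,\alpha)$, i.e.\ symmetrically about $0$, using that the Gaussian density is larger at $s$ than at $-s$ when $s>0$ and the mean is positive, as the paper does. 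Pairing about the mean instead, as in the $\varphi(0)\ge 0$ step of \pref{lem:extend-nishimori-condition-beta-1}, does not give a pointwise sign here, because $\tanh\,\sech^2$ is not monotone.
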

\begin{prf}
First, we show $\E_{\mathbf h^t}\left[\an{R_{\sigma, \tau}}_{\mathbf h^t}\right] \to q^*$. By~\pref{thm:even-moment-bounds-overlap-concentration}, take $k=1$ gives
\[
\E_{\mathbf h^t}\left[\an{R_{\sigma, \tau}-q^*}_{\mathbf h^t}\right] \leq \left(\E_{\mathbf h^t}\left[\an{\left(R_{\sigma, \tau}-q^*\right)^2}_{\mathbf h^t}\right]\right)^{1/2} \leq \sqrt{\left(\frac{C_{1,1}(\beta, \lambda_1)+6}{C_{1,6}(\beta, \lambda_1)}+\frac{4C_{1,4}(\beta, \lambda_1)}{eC_{1,5}(\beta, \lambda_1)}\right)\frac{1}{n}} \to 0.
\]
Next, to show $\E_{\mathbf h^t}\left[\an{R_{\sigma, \tau}}_{\mathbf h^t}\right] \neq q^*$ in general, let $\alpha=t+\beta^2 q^*$, $n=1$, and $\sigma, \tau \sim \mu_{1, h}$, where
\[
\mu_{1, h}(\sigma) = \frac{e^{\beta^2 \sigma^2 /2 + h\sigma}}{e^{\beta^2 \sigma^2 /2 + h}+e^{\beta^2 \sigma^2 /2 - h}} = \frac{e^{h\sigma}}{2\cosh{h}} .
\]
Since
\[
\an{R_{\sigma, \tau}}_h = \sum_{\sigma, \tau \in \{-1, +1\}} \sigma \tau \frac{e^{h\sigma}}{2\cosh{h}} \frac{e^{h\tau}}{2\cosh{h}} = \frac{\left(e^h\right)^2 + \left(e^{-h}\right)^2 -2}{4 \cosh^2{h}} = \tanh^2{h},
\]
we have
\[
\E_{h^t} \left[\an{R_{\sigma, \tau}}_{h^t}\right]=\E_{g \sim \mathcal N(0,1)}\left[\tanh^2\left(t + \sqrt{\alpha} g\right)\right].
\]
On the other hand, fixed-point equations give
\[
q^* = \E_{g \sim \mathcal N(0,1)}\left[\tanh^2\left(\beta^2 m^* + t + \sqrt{\alpha} g\right)\right].
\]
Define $f(x) =\E_{g \sim \mathcal N(0,1)}\left[\tanh^2\left(x + \sqrt{\alpha} g\right)\right] $, our goal is to show $f^\prime(x)>0$ for $x> 0$, then 
\[
\E_{h^t} \left[\an{R_{\sigma, \tau}}_{h^t}\right] = f(t) \leq f(t+\beta^2 m^*) = q^*.
\]
Note that equality holds only if $m^*=0$. Therefore, when $m^*\neq 0$, we have $\E_{h^t} \left[\an{R_{\sigma, \tau}}_{h^t}\right] \neq q^*$. Using the same method as~\pref{lem:extend-nishimori-condition-beta-1}, pair the positive and negative parts, we have
\allowdisplaybreaks
\begin{align*}
    f^\prime(x) &= \E_{g\sim \mathcal N(0,1)} \left[2 \tanh\left(x+\sqrt{\alpha} g\right) \sech^2\left(x+\sqrt{\alpha} g\right)\right] \\
    &= \E_{r\sim \mathcal N(x,\alpha)} \left[2 \tanh\left(r\right) \sech^2\left(r\right)\right] \\
    &= \frac{2}{\sqrt{2\pi \alpha}}\left(\int_0^{+\infty} \tanh\left(s\right) \sech^2\left(s\right) e^{-(s-x)^2/(2\alpha)} \, ds + \int_{-\infty}^{0} \tanh\left(s\right) \sech^2\left(s\right) e^{-(s+x)^2/(2\alpha)} \, ds \right) \\
    &= \frac{2}{\sqrt{2\pi \alpha}} \int_0^{+\infty} \tanh\left(s\right) \sech^2\left(s\right) \left(e^{-(s-x)^2/(2\alpha)}-e^{-(s+x)^2/(2\alpha)}\right) \, ds >0 .
\end{align*}
The final inequality holds because when $s>0$, $\tanh\left(s\right) \sech^2\left(s\right)>0$ and $e^{-(s-x)^2/(2\alpha)} >e^{-(s+x)^2/(2\alpha)}$.
\end{prf}

The following corollary shows that $\sqrt{n}(R_{\sigma,\tau}-q^*)$ has sub-Gaussian behavior for $0 < \beta < 1/2$.

\begin{corollary}[Sub-Gaussianity for overlap]\label{cor:sub-Gaussianity-for-overlap}
For any $\beta \in (0,1/2)$ and $n \in \Z_{>0}$, $\sqrt{n}(R_{\sigma, \tau}-q^*)$ is sub-Gaussian. Equivalently, there exists some constant $K_1(\beta)<\infty$, such that for any $p,n \in \Z_{>0}$, 
\[
\norm{\sqrt{n}(R_{\sigma, \tau} - q^*)}_{L^p} = \left(\E_{\mathbf h^t}\left[\an{\left|\sqrt{n}(R_{\sigma, \tau} - q^*)\right|^p}_{\mathbf h^t}\right] \right)^{1/p} \leq K_1(\beta) \sqrt{p}.
\]

\end{corollary}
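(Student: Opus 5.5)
The plan is to read off sub-Gaussianity from the even-moment bound in \pref{thm:even-moment-bounds-overlap-concentration}, using the standard moment characterization of sub-Gaussian random variables. The one point that needs care is the dependence of $C_1(\beta,k)$ on $k$. Write $X:=\sqrt n(R_{\sigma,\tau}-q^*)$, viewed as a random variable under the joint law $\E_{\mathbf h^t}\an{\cdot}_{\mathbf h^t}$. The goal is to show $\norm{X}_{L^{2k}}\le K\sqrt{k}$ for all $k\in\Z_{>0}$, with $K$ depending only on $\beta$ (and on the fixed $t$ and $\lambda_1$), uniformly in $n$.

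First I make the constant in \pref{thm:even-moment-bounds-overlap-concentration} explicit in $k$. From its proof,
\[
\E_{\mathbf h^t}\an{X^{2k}}_{\mathbf h^t}\le (C_{1,1}+6)\frac{k!}{C_{1,6}^k}+C_{1,4}\left(\frac{4k}{eC_{1,5}}\right)^k,
\]
where $C_{1,1},C_{1,4},C_{1,5},C_{1,6}$ depend only on $(\beta,\lambda_1)$ with $\lambda_1=1/10$ fixed. Using $k!\le k^k$, both terms are at most $A\,(Bk)^k$ with
\[
A:=C_{1,1}+6+C_{1,4}, \qquad B:=\max\{1/C_{1,6},\,4/(eC_{1,5})\}.
\]
Taking $2k$-th roots gives $\norm{X}_{L^{2k}}\le A^{1/(2k)}\sqrt{Bk}\le \max\{A,1\}^{1/2}\sqrt{B}\,\sqrt{k}$. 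Since $2k$ is the moment order, $\sqrt{k}\le\sqrt{2k}$, so even exponents $p=2k$ already satisfy $\norm{X}_{L^p}\le K\sqrt p$.

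For a general $p\in\Z_{>0}$, I take $k=\lceil p/2\rceil$, so $2k\le p+1\le 2p$. Since $\E_{\mathbf h^t}\an{\cdot}$ is a probability expectation, Jensen's (Lyapunov's) inequality gives $\norm{X}_{L^p}\le\norm{X}_{L^{2k}}\le K_0\sqrt{k}\le K_0\sqrt{p}$, with $K_0:=\max\{A,1\}^{1/2}\sqrt B$. Setting $K_1(\beta):=K_0$ gives the displayed inequality, uniformly in $n$, because none of $A,B$ depends on $n$. The equivalence with sub-Gaussianity, i.e.\ $\psi_2$-norm bounds and tails $\P(|X|>s)\le 2e^{-cs^2/K_1^2}$, is the standard equivalence of sub-Gaussian properties, e.g.\ \cite{vershynin2018high}, Proposition~2.5.2.

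The main obstacle is making sure the constants really are $n$-independent for every $n\ge1$, not just for large $n$. In the theorem, the exceptional contribution $C_{1,4}e^{-C_{1,5}n}(4n)^k$ was already bounded uniformly in $n$ via $\sup_x x^ke^{-C_{1,5}x}$, and the Laplace bound is only used on the good event $E_n$. Small $n$ therefore costs nothing extra. I would nonetheless double-check that the tail bound $\P(E_s^c)$ and the good-event bound hold for all $n$, so that no hidden threshold $n_0$ enters $K_1$.
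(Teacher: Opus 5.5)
Your proposal is correct and is essentially the paper's proof. The paper likewise takes $2k$-th roots of the explicit bound from the proof of \pref{thm:even-moment-bounds-overlap-concentration}, and handles odd $p$ by monotonicity of $L^p$ norms on a probability space; the only cosmetic difference is that it uses $(k/e)^k\le k!$ to write both terms as a multiple of $k!\,B^k$ and then $(k!)^{1/k}\le k$, where you use $k!\le k^k$ directly. Your worry about a hidden threshold $n_0$ does not materialize: in that proof the Laplace bound is invoked only on the good event $E_n$, where it holds for every $n\ge 1$, and the bounds on $\P(E_n^c)$ and $\P(E_s^c)$ also hold for all $n$ with $n$-free constants.
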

\begin{prf}
By~\pref{thm:even-moment-bounds-overlap-concentration},
\[
\E_{\mathbf h^t}\left[n^k\left\langle\left(R_{\sigma,\tau}-q^*\right)^{2k}\right\rangle_{\mathbf h^t}\right] \leq 2 \max\{C_{1,1}(\beta, \lambda_1)+6, C_{1,4}(\beta, \lambda_1)\} k! \left(\max\left\{\frac{1}{C_{1,6}(\beta, \lambda_1)}, \frac{4}{C_{1,5}(\beta, \lambda_1)}\right\}\right)^k,
\]
where we use $k^k/e^k \leq k!$. When $p=2k$, $k\in \Z_{>0}$, we have
\allowdisplaybreaks
\begin{align*}
    & \quad \left(\E_{\mathbf h^t}\left[\an{\left|\sqrt{n}(R_{\sigma, \tau} - q^*)\right|^p}_{\mathbf h^t}\right] \right)^{1/p} = \left(\E_{\mathbf h^t}\left[\an{n^k\left(R_{\sigma, \tau} - q^*\right)^{2k}}_{\mathbf h^t}\right] \right)^{1/2k} \\
    &\leq \left(2 \max\{C_{1,1}(\beta, \lambda_1)+6, C_{1,4}(\beta, \lambda_1)\}\right)^{1/2k} \sqrt{(k!)^{1/k}} \sqrt{\max\left\{\frac{1}{C_{1,6}(\beta, \lambda_1)}, \frac{4}{C_{1,5}(\beta, \lambda_1)}\right\}} \\
    & \leq \sqrt{2\max\left\{\frac{1}{C_{1,6}(\beta, \lambda_1)}, \frac{4}{C_{1,5}(\beta, \lambda_1)}\right\} \max\{C_{1,1}(\beta, \lambda_1)+6, C_{1,4}(\beta, \lambda_1)\}} \cdot \sqrt{k} \\
    &:= K_1(\beta) \sqrt{k} \leq K_1(\beta) \sqrt{p}
\end{align*}
Note that the probability space is a finite measure space, and in a finite measure space, according to Hölder's inequality, the $L^p$ norm is monotonically increasing with $p$. Therefore as $p=2k-1$,
\[
\left(\E_{\mathbf h^t}\left[\an{\left|\sqrt{n}(R_{\sigma, \tau} - q^*)\right|^p}_{\mathbf h^t}\right] \right)^{1/p} \leq \left(\E_{\mathbf h^t}\left[\an{\left|\sqrt{n}(R_{\sigma, \tau} - q^*)\right|^{2k}}_{\mathbf h^t}\right] \right)^{1/2k} \leq K_1(\beta) \sqrt{k} \leq K_1(\beta) \sqrt{p} . \qedhere
\]
\end{prf}

\paragraph{Bounding the MGF of the squared overlap deviation for \texorpdfstring{$0 < \beta < 1/2$}{0 < beta < 1/2}} We now bootstrap the control over the finite moments to the control the MGF via a Taylor expansion followed by a choice of parameters that leads to a convergent geometric sum.

\begin{theorem}[Exponential moment bound for $0 < \beta < 1/2$]\label{exponential-moment-bound-for-0-beta-1/2}
When $0<\lambda_2 < \min \left\{C_{1,5}\left(\beta, \lambda_1\right)/4, C_{1,6}\left(\beta, \lambda_1\right)\right\}$, for some constant $C_3(\beta, \lambda_2)>0$,
\[
\E_{\mathbf{h}^t}\left[\left\langle e^{\lambda_2 n\left(R_{\sigma, \tau}-q^*\right)^2}\right\rangle_{\mathbf{h}^t}\right] \leq C_3(\beta, \lambda_2).
\]  
\end{theorem}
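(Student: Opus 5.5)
Expand the exponential as a power series and use the explicit even-moment bounds from the proof of \pref{thm:even-moment-bounds-overlap-concentration}. By Tonelli (all terms are nonnegative),
\[
\E_{\mathbf h^t}\left[\left\langle e^{\lambda_2 n(R_{\sigma,\tau}-q^*)^2}\right\rangle_{\mathbf h^t}\right]=\sum_{k=0}^\infty \frac{\lambda_2^k}{k!}\,\E_{\mathbf h^t}\left[\left\langle n^k(R_{\sigma,\tau}-q^*)^{2k}\right\rangle_{\mathbf h^t}\right]
=1+\sum_{k\ge1}\frac{\lambda_2^k}{k!}\,C_1(\beta,k).
\]
Here I would keep $\lambda_1$ fixed as in that proof (e.g.\ $\lambda_1=1/10$), so that the constants $C_{1,1},C_{1,4},C_{1,5},C_{1,6}$ do not depend on $k$. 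I would not use the simplified sub-Gaussian norm bound of \pref{cor:sub-Gaussianity-for-overlap}, since the threshold stated in the theorem matches the explicit form of $C_1(\beta,k)$ directly.

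Insert the explicit constant
\[
C_1(\beta,k)=(C_{1,1}+6)\frac{k!}{C_{1,6}^k}+C_{1,4}\left(\frac{4k}{eC_{1,5}}\right)^k .
\]
The first part contributes $(C_{1,1}+6)\sum_k(\lambda_2/C_{1,6})^k$, which is a convergent geometric series because $\lambda_2<C_{1,6}$. For the second part, use $k^k/e^k\le k!$, which gives $\frac{\lambda_2^k}{k!}\left(\frac{4k}{eC_{1,5}}\right)^k\le (4\lambda_2/C_{1,5})^k$. This is again geometric and converges because $\lambda_2<C_{1,5}/4$.

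Summing both parts yields
\[
C_3(\beta,\lambda_2):=1+(C_{1,1}+6)\frac{\lambda_2/C_{1,6}}{1-\lambda_2/C_{1,6}}+C_{1,4}\frac{4\lambda_2/C_{1,5}}{1-4\lambda_2/C_{1,5}},
\]
which is finite and independent of $n$.

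The main point to check is that the bound of \pref{thm:even-moment-bounds-overlap-concentration} holds for every $n$ and not only for large $n$. The proof of that theorem handles the bad event $E_n^c$ via \pref{lem:probability-good-event}, and bounds the term $C_{1,4}e^{-C_{1,5}n}(4n)^k$ uniformly in $n$ through $\sup_n n^ke^{-C_{1,5}n}$. So the estimate is uniform in $n$, and the series argument then requires nothing further.
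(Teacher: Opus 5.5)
Your proposal is correct and follows the paper's proof. You expand the exponential, insert the explicit constant $C_1(\beta,k)$ from the even-moment theorem, and sum two geometric series that converge exactly when $\lambda_2<\min\{C_{1,5}/4,\,C_{1,6}\}$. The only cosmetic differences are that you use $k^k\le e^k k!$ where the paper invokes Stirling (which gives an extra factor $1/\sqrt{2\pi}$), and that you keep the $k=0$ term as exactly $1$.
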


\begin{prf}
Fix $\beta<1/2$, and recall from~\pref{thm:even-moment-bounds-overlap-concentration}:
\[
    \underset{\mathbf{h}^t}{\E}\left[\left\langle n^k\left(R_{\sigma, \tau}-q^*\right)^{2 k}\right\rangle_{\mathbf{h}^t}\right]\leq \left((C_{1,1}(\beta, \lambda_1) + 6) \frac{k!}{C_{1,6}(\beta, \lambda_1)^k} + C_{1,4}(\beta, \lambda_1)\left(\frac{4k}{eC_{1,5}(\beta, \lambda_1)}\right)^k\right),
\]
then Taylor expansion and Stirling's formula gives
\allowdisplaybreaks
\begin{align*}
    \E_{\mathbf{h}^t}\left[\left\langle e^{\lambda_2 n \left(R_{\sigma, \tau}-q^*\right)^2}\right\rangle_{\mathbf{h}^t}\right]&=\E_{\mathbf{h}^t}\left[\left\langle\sum_{k=0}^{\infty} \frac{\lambda_2^k n^k \left(R_{\sigma, \tau}-q^*\right)^{2 k}}{k!}\right\rangle_{\mathbf{h}^t}\right] \\
    & \leq (C_{1,1}(\beta, \lambda_1) + 6) \sum_{k=0}^{\infty} \left(\frac{\lambda_2}{C_{1,6}(\beta, \lambda_1)}\right)^k + C_{1,4}(\beta, \lambda_1) \sum_{k=0}^{\infty} \left(\frac{4\lambda_2}{eC_{1,5}(\beta, \lambda_1)}\right)^k \frac{k^k}{k!}  \\
    &\leq (C_{1,1}(\beta, \lambda_1) + 6) \sum_{k=0}^{\infty} \left(\frac{\lambda_2}{C_{1,6}(\beta, \lambda_1)}\right)^k + C_{1,4}(\beta, \lambda_1) + \frac{C_{1,4}(\beta, \lambda_1)}{\sqrt{2\pi}} \sum_{k=1}^\infty \left(\frac{4\lambda_2}{C_{1,5}(\beta, \lambda_1)}\right)^k.
\end{align*}
When $\lambda_2 < \min \left\{C_{1,5}\left(\beta, \lambda_1\right)/4, C_{1,6}\left(\beta, \lambda_1\right)\right\}$, these series converges. Hence we can take
\[
C_3(\beta, \lambda_2) := (C_{1,1}(\beta, \lambda_1) + 6) \sum_{k=0}^{\infty} \left(\frac{\lambda_2}{C_{1,6}(\beta, \lambda_1)}\right)^k + C_{1,4}(\beta, \lambda_1) + \frac{C_{1,4}(\beta, \lambda_1)}{\sqrt{2\pi}} \sum_{k=1}^\infty \left(\frac{4\lambda_2}{C_{1,5}(\beta, \lambda_1)}\right)^k < \infty. \qedhere
\]
    
\end{prf}

We can also obtain uniform bounds based on the equivalence relations between sub-Gaussian properties \cite[Proposition 2.6.1]{vershynin2018high}.

\begin{theorem}[Uniform bound of exponential moment for $0 < \beta < 1/2$]\label{thm:uniform-exponential-moment-bound-for-0-beta-1/2}
There exists some $\lambda_2>0$ such that 
\[
\E_{\mathbf h^t}\left[\an{e^{\lambda_2 n(R_{\sigma, \tau}-q^*)^2}}_{\mathbf h^t}\right] \leq 2.
\]
\end{theorem}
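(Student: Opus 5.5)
The plan is to deduce this uniform bound from the moment bounds already established, by shrinking the exponent until the series expansion of the exponential is at most $2$. The key fact is that the constant term of that series is exactly $1$. By \pref{cor:sub-Gaussianity-for-overlap}, the variable $X:=\sqrt n(R_{\sigma,\tau}-q^*)$, under the joint law $\E_{\mathbf h^t}\langle\cdot\rangle_{\mathbf h^t}$, satisfies $\norm{X}_{L^p}\le K_1(\beta)\sqrt p$ for every $p$. Crucially, $K_1(\beta)$ depends neither on $p$ nor on $n$. This is exactly property (ii) in the equivalence of sub-Gaussian characterizations \cite[Proposition 2.6.1]{vershynin2018high}, and the argument below is the implication (ii) $\Rightarrow$ (iii) specialized to our setting.

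First, expand the exponential and apply Tonelli, which is justified because all terms are nonnegative:
\[
\E_{\mathbf h^t}\left[\an{e^{\lambda X^2}}_{\mathbf h^t}\right]=1+\sum_{k\ge1}\frac{\lambda^k\,\E_{\mathbf h^t}\left[\an{X^{2k}}_{\mathbf h^t}\right]}{k!}.
\]
Second, bound each moment using the corollary with $p=2k$: $\E\langle X^{2k}\rangle\le (K_1\sqrt{2k})^{2k}=(2K_1^2k)^k$. Third, use the Stirling lower bound $k!\ge (k/e)^k$, which gives
\[
\frac{\lambda^k(2K_1^2k)^k}{k!}\le (2eK_1^2\lambda)^k.
\]
The sum over $k\ge1$ is therefore at most $\sum_{k\ge1}(2eK_1^2\lambda)^k$. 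If we choose $\lambda_2:=1/(4eK_1(\beta)^2)$, the ratio becomes $1/2$, the geometric tail sums to exactly $1$, and the total is at most $2$.

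The point to check most carefully is the uniformity of $K_1(\beta)$ in $n$ and $p$. The constant in \pref{thm:even-moment-bounds-overlap-concentration} has the form $A\,k!\,B^k$ with $A,B$ independent of $n$, and these are precisely what \pref{cor:sub-Gaussianity-for-overlap} absorbs into $K_1$. One subtlety is that the prefactor $A^{1/(2k)}$ is bounded by $\max(A,1)$, not by $A$ alone, so $K_1$ should be taken to accommodate both. The more serious issue is that the even-moment theorem relies on the good-event estimates of \pref{lem:probability-good-event}, and these apply to all $n$ only through the explicit exponential bounds, not through the almost-sure large-$n$ statements. For that reason I will take care to cite \pref{thm:even-moment-bounds-overlap-concentration} in its annealed, all-$n$ form.

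Alternatively, one could simply reuse \pref{exponential-moment-bound-for-0-beta-1/2} and take $\lambda_2$ small enough that both geometric series there contribute their constant terms plus something small. This route does not work directly: the $k=0$ term in that bound is $C_{1,1}+6+C_{1,4}>2$, because the constants were not normalized. That is why the direct re-expansion above, whose $k=0$ term equals $1$ exactly, is the route to use.
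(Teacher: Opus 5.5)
Your argument is correct and is essentially the paper's proof. The paper deduces the bound from \pref{cor:sub-Gaussianity-for-overlap} by citing the equivalence of sub-Gaussian characterizations in Vershynin's Proposition~2.6.1; you simply write that implication out (Tonelli, the moment bound at $p=2k$, $k!\ge(k/e)^k$, a geometric series of ratio $1/2$), which also produces the explicit value $\lambda_2=1/(4eK_1(\beta)^2)$, and your side remarks hold: the paper's prefactor $2\max\{C_{1,1}+6,C_{1,4}\}\ge 12$ already makes the $A^{1/(2k)}$ issue moot, and reusing the un-normalized preceding exponential-moment bound would indeed not give the constant $2$.
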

\begin{prf}
By~\pref{cor:sub-Gaussianity-for-overlap}, $\sqrt{n}(R_{\sigma, \tau}-q^*)$ is sub-Gaussian. Therefore \cite[Proposition 2.6.1]{vershynin2018high} gives MGF of $\sqrt{n}(R_{\sigma, \tau}-q^*)$. Specifically, there exists $K_3(\beta)>0$ such that
\[
\E_{\mathbf h^t}\left[\an{e^{n(R_{\sigma, \tau}-q^*)^2/K_3(\beta)^2}}_{\mathbf h^t}\right] \leq 2.
\]
Taking $\lambda_2 = 1/K_3(\beta)^2$ yields the result. \qedhere
\end{prf}

\section{Extending overlap concentration to \texorpdfstring{$0 <\beta<1$}{0 <beta < 1} via strong convexity}\label{sec:extending-to-beta-1}

In this section, we extend the result of~\pref{sec:bounding-overlap-moments-for-beta-1/2} to $\beta<1$. We first explain in detail why some of the estimates used in~\pref{sec:bounding-overlap-moments-for-beta-1/2} can not extend to $\beta<1$, and provide counterexamples. We subsequently present a new proof of uniqueness of global maximizer.

\subsection{Counter-example for $1/2 < \beta < 1$}\label{subsec:counterexample} Following the original framework, the argument in~\pref{lem:strict-concavity-f-transformed-basis} encounters an obstruction when we try to extend it to $\beta<1$. If we want it to be true, then for any $u, w \in \R$,
\[
\E_{h^t}\left[\frac{\cosh \left(2 \beta u+2 h^t\right) \cosh (2 \beta v) + e^{-4\lambda w}}{\left(e^{2 \lambda w} \cosh \left(2 \beta u+2 h^t\right)+e^{-2 \lambda w} \cosh (2 \beta v)\right)^2}\right] <1/2.
\]
However, when $v=0$, the expectation above is
\[
\E_{h^t}\left[\frac{\cosh \left(2 \beta u+2 h^t\right) + e^{-4\lambda w}}{\left(e^{2 \lambda w} \cosh \left(2 \beta u+2 h^t\right)+e^{-2 \lambda w}\right)^2}\right] = \E_{h^t}\left[\frac{1}{1+e^{4 \lambda w} \cosh \left(2 \beta u+2 h^t\right)}\right] \to 1,
\]
where the convergence holds by the Dominated Convergence Theorem and the fact that for any $u$ and $\omega$:
\[
\frac{1}{1+e^{4 \lambda w} \cosh(2\beta u +2 h^t(\omega))} \longrightarrow 1.
\]
Therefore, it is impossible to prove that the expectation above is less than $1/2$ for all $(u,v,w)\in\mathbb R^3$. We can also provide a finite counterexample. Let
\[
u=-\frac{t}{\beta}, \quad v=0, \quad w=-\frac{\beta^2 q^* + t +1}{2 \lambda},
\]
in addition,
\[
\E\left[\cosh(2(h^t-t))\right] = \int_{-\infty}^{+\infty} \frac{e^{2(x-t)}+e^{2(t-x)}}{2} \frac{1}{\sqrt{2\pi (\beta^2 q^* + t)}} \exp\left\{-\frac{(x-t)^2}{2(\beta^2 q^* + t)}\right\} \, dx= e^{2(\beta^2 q^* + t)},
\]
Jensen's inequality gives,
\[
\E_{h^t}\left[\frac{1}{1+e^{-4 \lambda \frac{\beta^2 q^* + t +1}{2 \lambda}} \cosh \left(2 (h^t-t)\right)}\right]\geq \frac{1}{1+e^{-2}}> \frac12.
\]
This contradicts the requirement stated above. Therefore, the previous proof is no longer valid. 

\subsection{Uniqueness of $\td c$ as global maximizer for $\beta \in (0,1)$ and some $\lambda(\beta)>0$}
We use a different method to directly prove that $\td c$ is the unique global maximizer of $\td F$ when $\beta < 1$ for some $\lambda(\beta)>0$. We retain all the notation from~\pref{sec:bounding-overlap-moments-for-beta-1/2}.

\begin{theorem}[Uniqueness of the global maximizer for $\beta<1$]
\label{thm:uniqueness-global-maximizer-beta-less-than-one}
Let $\beta\in(0,1)$ and
\[
0<\lambda<\min\left\{\frac14,\frac{1-\beta^2}{16\beta^2}\right\},
\]
then $\td c=(\beta m^*,0,0)$ is the unique global maximizer of $\td F(u,v,w)$.
\end{theorem}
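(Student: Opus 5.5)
We will follow the strategy sketched in the proof overview: show that every global maximizer of $\td F$ lies in a convex slab, prove that $\td F$ is strictly concave on that slab, and conclude using that $\td c$ is a stationary point lying in the slab. We work mostly in the $(x,y,w)$ coordinates with $\hat F$. Since the change of variables is linear, global maximizers and concavity transfer between $\hat F$ and $\td F$, and $\hat c=(\beta m^*,\beta m^*,0)$ corresponds to $\td c$.

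\emph{Step 1: existence and the slab.} By \pref{prop:f-infinite-limits}, $\hat F$ has at least one global maximizer $(x^*,y^*,w^*)$, and it is differentiable by dominated convergence. For each $h$, introduce the two-spin law
\[
\pi_h(\sigma,\tau)\propto\exp\{(\beta x+h)\sigma+(\beta y+h)\tau+2\lambda w\sigma\tau\}\,,
\]
and check that $\ln\bigl(e^{2\lambda w}\cosh(\beta x+\beta y+2h)+e^{-2\lambda w}\cosh(\beta x-\beta y)\bigr)$ equals $\ln\sum_{\sigma,\tau}e^{(\cdots)}$ up to an additive constant. Then
\[
\partial_w\hat F=-2\lambda w-2\lambda q^*+2\lambda\E_{h^t}\E_{\pi_{h^t}}[\sigma\tau]\,.
\]
Setting this to zero at the maximizer gives $w^*+q^*=\E\E_\pi[\sigma\tau]\in[-1,1]$. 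Because $q^*\in[0,1]$, it follows that $w^*\in[-1-q^*,1-q^*]\subseteq[-2,1]$. Denote the convex slab by $S:=\R^2\times[-1-q^*,1-q^*]$; on it, $|w|\le 2$.

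\emph{Step 2: Hessian on $S$.} Since the log-partition function's second derivatives are covariances, differentiating twice yields the block form $\begin{pmatrix}A&\mathbf b\\ \mathbf b^{\mathsf T}&d\end{pmatrix}$ from the overview. The entries are $A=-I_2+\beta^2\E\Cov_\pi(\sigma,\tau)$, $\mathbf b=2\beta\lambda\E(\Cov_\pi(\sigma,\sigma\tau),\Cov_\pi(\tau,\sigma\tau))$, and $d=-2\lambda+4\lambda^2\E\Var_\pi(\sigma\tau)$.

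For the $2\times2$ covariance matrix of $(\sigma,\tau)$, the diagonal entries are $\Var\sigma\le1$ and $\Var\tau\le1$. The off-diagonal entry is the key estimate. Let $a=\beta x+h$ and $b=\beta y+h$. We plan to compute
\[
\Cov_\pi(\sigma,\tau)=\partial_a\partial_b\log Z,\qquad Z=e^{2\lambda w}\cosh(a+b)+e^{-2\lambda w}\cosh(a-b)\ (\text{up to constants}),
\]
and bound it by $\tanh(2\lambda|w|)$. We expect this bound to be the main obstacle. The approach is to write $\pi$ as an Ising pair with coupling $J=2\lambda w$ and use the standard fact that the connected correlation satisfies $|\E\sigma\tau-\E\sigma\E\tau|\le\tanh|J|$. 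This can be verified directly: the connected correlation equals $(1-m_\sigma^2)(1-m_\tau^2)$-weighted differences, or, more concretely, one can reduce to a one-variable inequality in $e^{4J}$ after substituting $p_{\pm\pm}$. If that route is messy, we will fall back on the cruder but sufficient estimate $|\Cov(\sigma,\tau)|\le 1-e^{-4|J|}\le 4|J|$, obtained by comparing $\pi$ with the product measure at $J=0$ via the likelihood ratio bounded by $e^{\pm 2|J|}$. The constants may then need adjusting, which is acceptable since $\lambda$ only needs to be small.

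With $|w|\le2$, either estimate gives $|\Cov_\pi(\sigma,\tau)|\le\tanh(4\lambda)\le 4\lambda$. Using Gershgorin, $-A\succeq(1-\beta^2-4\beta^2\lambda)I_2$. By Cauchy--Schwarz, $|\Cov(\sigma,\sigma\tau)|\le1$, so $\|\mathbf b\|_2^2\le 8\beta^2\lambda^2$. Finally, $-d\ge2\lambda-4\lambda^2$.

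\emph{Step 3: Schur complement.} The matrix $-\nabla^2\hat F$ is positive definite as soon as $-A\succ0$ and
\[
-d-\mathbf b^{\mathsf T}(-A)^{-1}\mathbf b>0\,.
\]
A sufficient condition for the latter is $(2\lambda-4\lambda^2)(1-\beta^2-4\beta^2\lambda)>8\beta^2\lambda^2$. With $\lambda<1/4$ we have $2\lambda-4\lambda^2>\lambda$, and with $\lambda<(1-\beta^2)/(16\beta^2)$ we have $4\beta^2\lambda<(1-\beta^2)/4$, so $1-\beta^2-4\beta^2\lambda\ge\tfrac34(1-\beta^2)$. The remaining inequality $\tfrac34\lambda(1-\beta^2)>8\beta^2\lambda^2$ is equivalent to $\lambda<3(1-\beta^2)/(32\beta^2)$, which is implied by the hypothesis. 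Hence $\hat F$ is strictly concave on $S$.

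\emph{Step 4: conclusion.} Note that $\hat c\in S$, since $0\in[-1-q^*,1-q^*]$, and $\nabla\hat F(\hat c)=0$ by \pref{prop:stationary-point-rate-function-variance-mgf}. A strictly concave differentiable function on a convex set has at most one stationary point, and any such point is the unique maximizer over the set. Every global maximizer is a stationary point lying in $S$, so it must equal $\hat c$. Translating back gives that $\td c$ is the unique global maximizer of $\td F$.
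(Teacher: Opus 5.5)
Your proposal is correct and follows the paper's proof essentially step for step. The common steps are the slab $w^*\in[-1-q^*,1-q^*]$ from $\partial_w\hat F=0$, the block Hessian $(A,\mathbf b,d)$, the bound $|\Cov_{\pi}(\sigma,\tau)|\le\tanh(2\lambda|w|)\le 4\lambda$, and the Schur complement. For the covariance bound, the paper argues via $\E_\pi[\tau\mid\sigma]=a^{+}+a^{-}\sigma$; your planned computation $\partial_a\partial_b\log Z=2\sinh(2J)/Z^2$ with $Z\ge 2\cosh J$ gives the same bound in one line.

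The one caveat is your fallback estimate. With $|J|=2\lambda|w|\le 4\lambda$, the bound $1-e^{-4|J|}$ gives only $16\lambda$, not $4\lambda$. That does not cover the stated range $\lambda<(1-\beta^2)/(16\beta^2)$, so the sharp $\tanh|J|$ estimate is genuinely needed.
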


\begin{prf}
Recall the change of basis
\[
\begin{pmatrix}
x\\
y\\
w
\end{pmatrix}=\begin{pmatrix}
1&1&0\\
1&-1&0\\
0&0&1
\end{pmatrix}
\begin{pmatrix}
u\\
v\\
w
\end{pmatrix}=:T\begin{pmatrix}
u\\
v\\
w
\end{pmatrix}.
\]
Since $T$ is invertible, there is a one-to-one correspondence between all $(x, y, w)$ and $(u, v, w)$. In particular, it is equivalent to prove that $\hat c = (\beta m^*,\beta m^*,0)$ is the unique global maximizer of $\hat F(x,y,w)$. 

Using the same proof as in \pref{prop:f-infinite-limits}, we obtain $\lim_{\norm{(x,y,w)}_2 \to \infty} \hat F(x,y,w) = -\infty$, which implies that global maximizers of $\hat F$ exist. Let $(x^*, y^*, w^*)$ denote this maximizer. The idea of the framework in~\pref{sec:bounding-overlap-moments-for-beta-1/2} is trying to prove $-\nabla^2 \hat F(x, y, w) \succ 0$ for all $(x, y, w) \in \mathbb{R}^3$, but the case where $w \to \infty$ presents significant difficulties. To overcome this challenge, we leverage the condition that the gradient of $\hat F$ is $0$ at the critical point to restrict the possible range of $w^*$ from $\mathbb{R}$ to a closed interval, thereby ruling out the case where $w$ is excessively large.

First, for any $h$ and $(x, y, w) \in \mathbb{R}^3$, define spin measure on $\{-1, 1\}^2$:
\[
\pi_{h}(\sigma, \tau) \propto \exp \{(\beta x+h) \sigma+(\beta y+h) \tau+2 \lambda w \sigma \tau\}.
\]
It follows immediately that
\allowdisplaybreaks
\begin{align*}
    \E_{\pi_{h}} \left[\sigma\right] &= \frac{e^{\beta (x + y) +2 h +2\lambda w} + e^{\beta (x-y) -2\lambda w}-e^{-\beta (x + y) -2 h +2\lambda w}-e^{\beta (y-x) -2\lambda w}}{2e^{2\lambda w} \cosh(\beta (x + y) +2h) + 2e^{-2\lambda w} \cosh(\beta (x-y))}\\
    & =\frac{e^{2\lambda w} \sinh(\beta (x + y) +2h) + e^{-2\lambda w} \sinh(\beta(x-y))}{e^{2\lambda w} \cosh(\beta(x + y) +2h) + e^{-2\lambda w} \cosh(\beta(x-y))}, \\
    \E_{\pi_{h}} \left[\tau\right] &= \frac{e^{\beta(x+y) +2 h +2\lambda w} + e^{\beta(y-x) -2\lambda w}-e^{-\beta(x+y) -2 h +2\lambda w}-e^{\beta(x-y) -2\lambda w}}{2e^{2\lambda w} \cosh(\beta(x+y) +2h) + 2e^{-2\lambda w} \cosh(\beta(x-y))} \\
    &=\frac{e^{2\lambda w} \sinh(\beta(x+y) +2h) - e^{-2\lambda w} \sinh(\beta(x-y))}{e^{2\lambda w} \cosh(\beta(x+y) +2h) + e^{-2\lambda w} \cosh(\beta(x-y))}, \\
    \E_{\pi_{h}} \left[\sigma \tau\right] &= \frac{e^{\beta(x+y) +2 h +2\lambda w} + e^{-\beta(x+y) -2 h +2\lambda w} - e^{\beta(x-y) -2\lambda w}-e^{\beta(y-x) -2\lambda w}}{2e^{2\lambda w} \cosh(\beta(x+y) +2h) + 2e^{-2\lambda w} \cosh(\beta(x-y))}\\
    &=\frac{e^{2\lambda w} \cosh(\beta(x+y) +2h) - e^{-2\lambda w} \cosh(\beta(x-y))}{e^{2\lambda w} \cosh(\beta(x+y) +2h) + e^{-2\lambda w} \cosh(\beta(x-y))}.
\end{align*}
Since $\partial_w \hat F(x^*,y^*,w^*)=0$, we have
\[
w^* +q^* = \E_{h^t} \left[\frac{e^{2\lambda w} \cosh\left(\beta(x+y) +2 h^t\right)-e^{-2\lambda w} \cosh\left(\beta(x-y) \right)}{e^{2\lambda w} \cosh\left(\beta(x+y) +2 h^t\right)+e^{-2\lambda w} \cosh\left(\beta(x-y) \right)}\right]= \E_{h^t} \left[\E_{\pi_{h^t}} \left[\sigma \tau\right]\right] \in [-1, 1].
\]
Then $w^* \in [-1-q^*, 1-q^*] \subset [-2, 1]$. Thus, we only need to show for all $(x, y, w) \in \mathbb{R}^2 \times [-1-q^*, 1-q^*]$, $-\nabla^2 \hat{F}(x, y, w) \succ 0$. 

Now, fixing $(x, y) \in \mathbb{R}^2$ and $w \in [-1-q^*, 1-q^*]$, we have
\allowdisplaybreaks
\begin{align*}
    \partial_{x,x} \hat{F}(x, y, w) &= -1 + \beta^2 - \beta^2 \E_{h^t} \left[\left(\frac{e^{2\lambda w} \sinh(\beta(x+y) +2h^t) + e^{-2\lambda w} \sinh(\beta(x-y))}{e^{2\lambda w} \cosh(\beta(x+y) +2h^t) + e^{-2\lambda w} \cosh(\beta(x-y))}\right)^2\right]\\
    &=-1 + \beta^2 \E_{h^t}\left[\Var_{\pi_{h^t}} \left(\sigma\right)\right], \\
    \partial_{y,y} \hat{F}(x, y, w) &= -1 + \beta^2 - \beta^2 \E_{h^t} \left[\left(\frac{e^{2\lambda w} \sinh(\beta(x+y) +2h^t) - e^{-2\lambda w} \sinh(\beta(x-y))}{e^{2\lambda w} \cosh(\beta(x+y) +2h^t) + e^{-2\lambda w} \cosh(\beta(x-y))}\right)^2\right]\\
    &= -1 + \beta^2 \E_{h^t}\left[\Var_{\pi_{h^t}} \left(\tau \right)\right],\\
    \partial_{w, w} \hat F(x,y,w) &= -2\lambda +4\lambda^2 - 4\lambda^2 \E_{h^t}\left[\left(\frac{e^{2\lambda w} \cosh(\beta x +\beta y +2h^t) - e^{-2\lambda w} \cosh(\beta x -\beta y)}{e^{2\lambda w} \cosh(\beta x +\beta y +2h^t) + e^{-2\lambda w} \cosh(\beta x -\beta y)}\right)^2\right]\\
    &= -2\lambda + 4\lambda^2 \E_{h^t} \left[\Var_{\pi_{h^t}} \left(\sigma \tau \right)\right], \\
    \partial_{x,y} \hat{F}(x, y, w) &= \beta^2 \E_{h^t} \left[\frac{e^{8\lambda w} - 1}{\left(e^{4\lambda w} \cosh(\beta x+\beta y +2 h^t) + \cosh(\beta x - \beta y)\right)^2}\right]=\beta^2 \E_{h^t}\left[\Cov_{\pi_{h^t}}(\sigma, \tau)\right], \\
    \partial_{x, w} \hat F(x,y,w)&=2\lambda \beta \E_{h^t}\left[\frac{2\sinh(\beta(x+y) + 2 h^t) \cosh(\beta (x-y)) - 2\cosh(\beta(x+y)+ 2 h^t) \sinh(\beta(x-y))}{\left(e^{2\lambda w} \cosh(\beta(x+y) +2h^t) + e^{-2\lambda w} \cosh(\beta(x-y))\right)^2}\right] \\
    &= 2\lambda \beta\E_{h^t}\left[\Cov_{\pi_{h^t}}(\sigma, \sigma \tau)\right], \\
    \partial_{y, w} \hat F(x,y,w)&= 2\lambda \beta \E_{h^t}\left[\frac{2\sinh(\beta(x+y) + 2 h^t) \cosh(\beta (x-y)) + 2\cosh(\beta(x+y)+ 2 h^t) \sinh(\beta(x-y))}{\left(e^{2\lambda w} \cosh(\beta(x+y) +2h^t) + e^{-2\lambda w} \cosh(\beta(x-y))\right)^2}\right] \\
    &=2\lambda \beta\E_{h^t}\left[\Cov_{\pi_{h^t}}(\tau, \sigma \tau)\right].
\end{align*}
Thus, the Hessian can be written in the following block form
\[
\nabla^2 \hat{F}(x, y, w)=\left(\begin{array}{cc}
A & \mathbf{b} \\
\mathbf{b}^{\top} & d
\end{array}\right),
\]
where 
\[
A=\nabla^2_{x,y} \hat F(x,y,w)= -I_2 + \beta^2 \E_{h^t} \left[\Cov_{\pi_{h^t}}\binom{\sigma}{\tau}\right], \quad \mathbf b = 2 \beta \lambda \E_{h^t}\binom{\Cov_{\pi_{h^t}}(\sigma, \sigma \tau)}{\Cov_{\pi_{h^t}}(\tau, \sigma \tau)}, \quad d = -2\lambda +4\lambda^2 \E_{h^t} \left[\Var_{\pi_{h^t}}(\sigma \tau)\right].
\]

Next, we bound $A, \mathbf b$, and $d$ in the Hessian matrix respectively. For the interaction term $\Cov_{\pi_{h^t}}(\sigma, \tau)$, we have
\[
\begin{aligned}
    \E_{\pi_{h^t}} \left[\tau \mid \sigma \right] &= \sum_{\tau \in \{-1, +1\}} \tau \pi_{h^t}(\tau \mid \sigma) = \sum_{\tau \in \{-1, +1\}} \tau \frac{e^{(\beta y + h^t) \tau +2\lambda w \sigma \tau}}{2\cosh(\beta y + h^t + 2\lambda w \sigma)} = \tanh(\beta y + h^t + 2\lambda w \sigma) \\
    &= \frac{\tanh(\beta y + h^t + 2\lambda w ) + \tanh(\beta y + h^t - 2\lambda w)}{2} + \frac{\tanh(\beta y + h^t + 2\lambda w) - \tanh(\beta y + h^t - 2\lambda w)}{2} \sigma \\
    &:= a^+ + a^- \sigma
\end{aligned}
\]
Then by tower property,
\allowdisplaybreaks
\begin{align*}
    \left|\Cov_{\pi_{h^t}}(\sigma, \tau)\right| &= \left|\E_{\pi_{h^t}} [\sigma \tau] - \E_{\pi_{h^t}}[\sigma] \E_{\pi_{h^t}}[\tau]\right| = \left|\E_{\pi_{h^t}}\left[\sigma \E_{\pi_{h^t}}\left[\tau \mid \sigma \right]\right] - \E_{\pi_{h^t}}[\sigma] \E_{\pi_{h^t}}\left[\E_{\pi_{h^t}}[\tau \mid \sigma]\right]\right| \\
    &= \left|\E_{\pi_{h^t}}\left[\sigma \left(a^+ + a^- \sigma\right)\right] - \E_{\pi_{h^t}}[\sigma] \E_{\pi_{h^t}}\left[a^+ + a^- \sigma\right]\right| = \left|a^+\E_{\pi_{h^t}} \left[\sigma\right] + a^- - a^+ \E_{\pi_{h^t}}\left[\sigma\right] - a^- \left(\E_{\pi_{h^t}} \left[\sigma\right]\right)^2 \right| \\
    &=\left|a^- \Var_{\pi_{h^t}}(\sigma) \right|\leq \left| \frac{\sinh(4\lambda w)}{2\cosh(\beta y + h^t + 2\lambda w)\cosh(\beta y + h^t - 2\lambda w)} \right| = \left| \frac{\sinh(4\lambda w)}{\cosh(2\beta y + 2 h^t)+\cosh(4\lambda w)}\right|\\
    &\leq \frac{\left|\sinh(4\lambda w)\right|}{1+\cosh(4\lambda w)}= \frac{2|\sinh(2\lambda w)|\cosh(2\lambda w)}{2 \cosh^2(2\lambda w)} = \tanh(2\lambda |w|),
\end{align*}
where we use $\Var_{\pi_{h^t}}(\sigma)\leq 1$ and $\tanh r-\tanh s=\sinh (r-s)/(\cosh r \cosh s)$ for any $r,s \in \R$. Then, for any $\mathbf z_0=(z_x,z_y)^\top\in\mathbb R^2$,
\[
\begin{aligned}
    \mathbf z_0^\top \Cov_{\pi_{h^t}} \binom{\sigma}{\tau} \mathbf z_0 &= \Var_{\pi_{h^t}} (\sigma) z_x^2 + \Var_{\pi_{h^t}} (\tau) z_y^2 + 2 \Cov_{\pi_{h^t}} (\sigma, \tau) z_x z_y \leq z_x^2 + z_y^2 + 2 |\Cov_{\pi_{h^t}} (\sigma, \tau)| |z_x z_y| \\
    &\leq (1+|\Cov_{\pi_{h^t}} (\sigma, \tau)|) \|\mathbf z_0\|_2^2 \leq (1+\tanh(2\lambda |w|))\|\mathbf z_0\|_2^2.
\end{aligned}
\]
Divide $\|\mathbf z_0\|_2^2$ to the left and take the supreme value for all $\mathbf z_0$, we have 
\[
\left\| \Cov_{\pi_{h^t}} \binom{\sigma}{\tau}\right\|_{\text{op}} \leq 1 + \tanh(2\lambda |w|) \leq 1 + \tanh(4\lambda).
\]
Therefore,
\[
\Cov_{\pi_{h^t}} \binom{\sigma}{\tau} \preceq (1+\tanh(4\lambda)) I_2 \implies \E_{h^t} \left[\Cov_{\pi_{h^t}} \binom{\sigma}{\tau}\right] \preceq (1+\tanh(4\lambda)) I_2 .
\]
Since $\lambda < (1-\beta^2)/(16\beta^2)$,
\[
-A \succeq\left(1-\beta^2(1+\tanh (4 \lambda))\right) I_2 \succeq \left(1-\beta^2 - 4\beta^2\lambda \right) I_2 \succ \frac{\left(1-\beta^2 \right)}{2} I_2 \succ 0.
\]
Additionally, $\lambda_{\text{min}}(-A)> (1-\beta^2)/2>0$.

For $\mathbf b$, according to Cauchy-Schwarz inequality,
\[
\begin{aligned}
    \|\mathbf{b}\|_2^2&=4 \beta^2 \lambda^2\left(\left(\E_{h^t} \left[\Cov_{\pi_{h^t}}(\sigma, \sigma \tau)\right]\right)^2+\left(\E_{h^t}\left[\Cov_{\pi_{h^t}}(\tau, \sigma \tau)\right]\right)^2\right) \leq 4 \beta^2 \lambda^2 \left(\E_{h^t} \left[\Cov_{\pi_{h^t}}(\sigma, \sigma \tau)^2\right] + \E_{h^t} \left[\Cov_{\pi_{h^t}}(\tau, \sigma \tau)^2\right] \right) \\
    &\leq 4 \beta^2 \lambda^2 \left(\E_{h^t} \left[\Var_{\pi_{h^t}}(\sigma)\Var_{\pi_{h^t}}(\sigma \tau)\right] + \E_{h^t} \left[\Var_{\pi_{h^t}}(\tau)\Var_{\pi_{h^t}}(\sigma \tau)\right]\right) \leq 8\beta^2\lambda^2,
\end{aligned}
\]
The last inequality holds because $\sigma, \tau, \sigma \tau \in \{-1, +1\}$, thus $\Var_{\pi_{h^t}}(\sigma)$, $\Var_{\pi_{h^t}}(\tau)$, and $\Var_{\pi_{h^t}}(\sigma \tau) \leq 1$.

For d, we have,
\[ -d = 2\lambda -4\lambda^2 \E_{h^t}\left[\Var_{\pi_{h^t}}(\sigma \tau)\right] \geq 2\lambda - 4\lambda^2 >\lambda,
\]
where the last inequality holds because $2\lambda-4\lambda^2 - \lambda = \lambda(1-4\lambda)>_{\lambda<1/4} 0$. \\
Finally, for any $\mathbf z = (\mathbf z_0, z_w)^\top \neq \mathbf 0 \in \R^3$,
\allowdisplaybreaks
\begin{align*}
    \mathbf z^{\top}(-\nabla^2 \hat F(x,y,w))\mathbf z &=\mathbf z_0^{\top}(-A) \mathbf z_0 -2 z_w \mathbf{b}^{\top} \mathbf z_0 -d z_w^2 \\
    &= -\left(\mathbf z_0-z_w (-A)^{-1} \mathbf{b}\right)^{\top}A\left(\mathbf z_0-z_w (-A)^{-1} \mathbf{b}\right)-\left(d+\mathbf{b}^{\top} (-A)^{-1} \mathbf{b}\right) z_w^2.
\end{align*}
Since $-A \succ 0$, $z_w$ and $\mathbf z_0-z_w(-A)^{-1} \mathbf b$ can not both be zero (otherwise $\mathbf z_0=(0,0)^\top$ and $z_w=0$, which contradicts the requirement that $\mathbf z \neq \mathbf 0$), the proof is complete provided that $d+\mathbf{b}^{\top} (-A)^{-1} \mathbf{b}<0$. Since $\|\mathbf{b}\|_2^2\leq 8\beta^2 \lambda^2$ and $\lambda_{\text{min}}(-A)>(1-\beta^2)/2$, we have
\[
\mathbf{b}^{\top} (-A)^{-1} \mathbf{b} \leq \lambda_{\text{max}}\left((-A)^{-1}\right) \|\mathbf{b}\|_2^2 = \frac{ \|\mathbf{b}\|_2^2}{\lambda_{\text{min}}(-A)}< \frac{16\beta^2\lambda^2}{1-\beta^2} < \lambda <-d,
\]
which completes the proof.
\end{prf}
The proof of the remaining part is almost identical to the proof in \pref{subsec:laplace-approximation-around-critical-point}. Only two places need to be changed. 
\allowdisplaybreaks
\vspace{-2mm}
\begin{enumerate}
    \item First, we should take $R_1 >\delta$ large enough such that
        \[
        \frac{\lambda R_1^2}{4}-C_0(\beta, \lambda) + \ln2 - \beta^2>0,
        \]
        then $\gamma_{R_1} = \lambda R_1^2/4-C_0(\beta, \lambda)>\beta^2 - \ln2$.
        This implies
        \[
        \gamma_{R_1} + \bar F(\mathbf h; \td c) \geq -\beta^2(m^*)^2 -2\ln 2 + 2\ln 2 + \beta^2\geq 0.
        \]
        This changes all instances of $R$ in the constant $C_{1,1}(\beta, \lambda)$ to $R_1$ and replaces $\frac{2^{5/2}e^{-n\gamma_R}}{\sqrt{\pi n } \lambda^2 R }$ with $\frac{2^{5/2}}{\sqrt{\pi n } \lambda^2 R_1 }$.
        \item Second, in \pref{subsec:laplace-approximation-around-critical-point} we set $\lambda_1=1/10$, whereas here, for any fixed $\beta\in(0,1)$, we set
            \[
            \lambda_3(\beta) = \min \left\{\frac{1}{8}, \frac{1-\beta^2}{32 \beta^2}\right\}.
            \]
\end{enumerate} 
Thus, this implies that
\allowdisplaybreaks
\begin{align*}
    \E_{\mathbf h^t}\left[\left\langle\left(R_{\sigma,\tau}-q^*\right)^{2k}\right\rangle_{\mathbf h^t}\right] &\leq \frac{1}{n^k}\left((C_{2,1}(\beta, \lambda_3(\beta)) + 6) \frac{k!}{C_{1,6}(\beta, \lambda_3(\beta))^k} + C_{1,4}(\beta, \lambda_3(\beta))\left(\frac{4k}{eC_{1,5}(\beta, \lambda_3(\beta))}\right)^k\right)\\
    &:=\frac{C_2(\beta, k)}{n^k},
\end{align*}
where
\[
C_{2,1}(\beta, \lambda) = \sqrt{\frac{2\lambda\det(A_{\mathrm{den}})}{\det(A_{\text{num}})}}\frac{\left(1+\frac{\sqrt{2}R_1^3 C_{1,2}}{3\sqrt{\pi}}+\frac{2^{3/2}R_1 C_{1,3}}{\sqrt{\pi} \lambda} + \frac{2^{5/2}}{\sqrt{\pi}\lambda^2R_1}\right)}{\left(1-e^{-\delta^2\lambda_{\min}(A_{\mathrm{den}})/8}\right)}.
\]

We can immediately extend ~ \pref{cor:sub-Gaussianity-for-overlap} to the case where $\beta < 1$, which shows $\sqrt{n}(R_{\sigma, \tau} -q^*)$ has sub-Gaussian behavior for $1/2<\beta <1$. Let
\[
K_2(\beta):=\sqrt{2\max\left\{\frac{1}{C_{1,6}(\beta, \lambda_3(\beta))}, \frac{4}{C_{1,5}(\beta, \lambda_3(\beta))}\right\} \max\{C_{2,1}(\beta, \lambda_3(\beta))+6, C_{1,4}(\beta, \lambda_3(\beta))\}},
\]
then for any $p,n \in \Z_{>0}$, 
\[
\norm{\sqrt{n}(R_{\sigma, \tau} - q^*)}_{L^p} = \left(\E_{\mathbf h^t}\left[\an{\left|\sqrt{n}(R_{\sigma, \tau} - q^*)\right|^p}_{\mathbf h^t}\right] \right)^{1/p} \leq K_2(\beta) \sqrt{p}.
\]

\subsection{Bounding the MGF of the squared overlap deviation for \texorpdfstring{$0 < \beta < 1$}{0 < beta < 1}}

\begin{theorem}[Exponential moment bound for $0 < \beta < 1$]\label{exponential-moment-bound-for-0-beta-1}
Fix $0<\beta<1$. When $0<\lambda_4 < \min \left\{C_{1,5}\left(\beta, \lambda_3(\beta)\right)/4, C_{1,6}\left(\beta, \lambda_3(\beta)\right)\right\}$, for some constant $C_4(\beta, \lambda_4)>0$,
\[
\E_{\mathbf{h}^t}\left[\left\langle e^{\lambda_4 n\left(R_{\sigma, \tau}-q^*\right)^2}\right\rangle_{\mathbf{h}^t}\right] \leq C_4(\beta, \lambda_4).
\]  
\end{theorem}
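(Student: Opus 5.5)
The argument is the same Taylor-series bootstrap used for \pref{exponential-moment-bound-for-0-beta-1/2}, now fed by the even-moment bound $C_2(\beta,k)/n^k$ obtained after \pref{thm:uniqueness-global-maximizer-beta-less-than-one}. The first step is to record that bound in usable form. For every $k\in\Z_{>0}$ and $\lambda_3=\lambda_3(\beta)$,
\[
\E_{\mathbf h^t}\left[\left\langle n^k\left(R_{\sigma,\tau}-q^*\right)^{2k}\right\rangle_{\mathbf h^t}\right]\le (C_{2,1}(\beta,\lambda_3)+6)\frac{k!}{C_{1,6}(\beta,\lambda_3)^k}+C_{1,4}(\beta,\lambda_3)\left(\frac{4k}{eC_{1,5}(\beta,\lambda_3)}\right)^k .
\]
This requires that the Laplace argument of \pref{thm:whp-overlap-concentration-laplace-cwrf} and the good-event estimate of \pref{lem:probability-good-event} go through at $\lambda_3(\beta)$. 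They do, because $\lambda_3(\beta)<\min\{1/4,(1-\beta^2)/(16\beta^2)\}$. Hence \pref{thm:uniqueness-global-maximizer-beta-less-than-one} applies, and the Schur-complement computation in its proof also gives $-\nabla^2\td F(\td c)\succ0$, so $A_{\mathrm{num}}$ and $A_{\mathrm{den}}$ are well defined. The third-derivative bound and the concentration lemmas did not depend on $\beta<1/2$. The proof of \pref{thm:even-moment-bounds-overlap-concentration} used $\lambda_1<7/16$ only through these facts, so it transfers verbatim with $\theta\in(0,2\lambda_3)$.

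Next, since $n(R_{\sigma,\tau}-q^*)^2\le 4n$ is bounded for each fixed $n$, I expand $e^{\lambda_4 n(R_{\sigma,\tau}-q^*)^2}$ as a power series. Monotone convergence (all terms are nonnegative) justifies exchanging $\E_{\mathbf h^t}\langle\cdot\rangle$ with the sum. Dividing the moment bound by $k!$ and multiplying by $\lambda_4^k$ gives two series. The first is
\[
(C_{2,1}+6)\sum_{k\ge0}\left(\frac{\lambda_4}{C_{1,6}}\right)^k ,
\]
which is geometric and converges when $\lambda_4<C_{1,6}(\beta,\lambda_3)$. The second is
\[
C_{1,4}\sum_{k\ge0}\left(\frac{4\lambda_4}{eC_{1,5}}\right)^k\frac{k^k}{k!} .
\]
By Stirling, $k!\ge\sqrt{2\pi k}\,(k/e)^k$, so $k^k/k!\le e^k/\sqrt{2\pi}$ for $k\ge1$. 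The $k$-th term is then at most $(4\lambda_4/C_{1,5})^k/\sqrt{2\pi}$, which is geometric and converges when $\lambda_4<C_{1,5}(\beta,\lambda_3)/4$. Adding the $k=0$ term, I define $C_4(\beta,\lambda_4)$ as the sum of the two series. This constant is finite and independent of $n$.

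The only genuine obstacle is the first step: checking that every constant in the $\beta<1/2$ chain ($\varepsilon$, $\delta$, $R_1$, $\gamma_{\delta,R_1}$, $\gamma_{R_1}$, $C_{1,5}$, $C_{1,6}$) remains strictly positive and $n$-independent once uniqueness comes from \pref{thm:uniqueness-global-maximizer-beta-less-than-one} instead of \pref{lem:joint-concavity-basis-changed-coordinate-function}. The most delicate piece is the strict gap $\gamma_{\delta,R_1}>0$, which follows from uniqueness together with compactness of the annulus. The modified choice of $R_1$ is needed so that the term $e^{-n(\gamma_{R_1}+\bar F(\mathbf h;\td c))}$ stays bounded: for $\beta$ near $1$ the earlier lower bound $\bar F(\mathbf h;\td c)>0$ fails, since it relied on $\beta^2\le 1/4$.
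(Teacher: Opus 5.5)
Your proposal is correct and follows essentially the same route as the paper: expand the exponential as a power series and insert the even-moment bound $C_2(\beta,k)/n^k$, built from $C_{2,1}(\beta,\lambda_3(\beta))$. Stirling's bound $k^k/k!\le e^k/\sqrt{2\pi}$ then reduces everything to two geometric series, which converge exactly when $\lambda_4<\min\{C_{1,5}/4,\,C_{1,6}\}$. Your extra justifications (monotone convergence for the interchange, and the transfer of the Laplace/good-event chain to $\lambda_3(\beta)$, including the modified $R_1$) correspond to the points the paper covers in its remark just before the theorem.
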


\begin{prf}
Taylor expansion and Stirling's formula gives
\allowdisplaybreaks
\begin{align*}
\E_{\mathbf{h}^t}\left[\left\langle e^{\lambda_4 n \left(R_{\sigma, \tau}-q^*\right)^2}\right\rangle_{\mathbf{h}^t}\right]&\leq (C_{2,1}(\beta, \lambda_3(\beta)) + 6) \sum_{k=0}^{\infty} \left(\frac{\lambda_4}{C_{1,6}(\beta, \lambda_3(\beta))}\right)^k + C_{1,4}(\beta, \lambda_3(\beta)) \\
& \qquad + \frac{C_{1,4}(\beta, \lambda_3(\beta))}{\sqrt{2\pi}} \sum_{k=1}^\infty \left(\frac{4\lambda_4}{C_{1,5}(\beta, \lambda_3(\beta))}\right)^k\\
& := C_4(\beta, \lambda_4) <\infty. \qedhere
\end{align*}
    
\end{prf}

Similarly, we can also obtain uniform bounds for $1/2<\beta<1$.

\begin{theorem}[Uniform bound of exponential moment for $0 < \beta < 1$]\label{thm:uniform-exponential-moment-bound-for-0-beta-1}
There exists some $\lambda_4>0$ such that 
\[
\E_{\mathbf h^t}\left[\an{e^{\lambda_4 n(R_{\sigma, \tau}-q^*)^2}}_{\mathbf h^t}\right] \leq 2.
\]
\end{theorem}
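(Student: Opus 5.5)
The plan mirrors the proof of \pref{thm:uniform-exponential-moment-bound-for-0-beta-1/2}, with the $\beta<1/2$ inputs replaced by their $\beta<1$ counterparts. The argument converts the $L^p$ (sub-Gaussian moment) bound already established for $\beta<1$ into an exponential-square moment bound with the explicit constant $2$.

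First, take as given the $\beta<1$ analogue of \pref{cor:sub-Gaussianity-for-overlap} stated just above. For every $p,n\in\Z_{>0}$,
\[
\norm{\sqrt n(R_{\sigma,\tau}-q^*)}_{L^p}\le K_2(\beta)\sqrt p .
\]
The $L^p$ norm here is taken with respect to the joint law of $(\mathbf h^t,\sigma,\tau)$, i.e.\ $\E_{\mathbf h^t}\an{\cdot}_{\mathbf h^t}$. This joint law is a genuine probability measure on $\Omega\times(\{-1,+1\}^n)^2$. The moment bound rests on the uniqueness of the maximizer from \pref{thm:uniqueness-global-maximizer-beta-less-than-one}, used with $\lambda_3(\beta)$ in the Laplace argument.

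Second, apply the equivalence of sub-Gaussian characterizations \cite[Proposition 2.6.1]{vershynin2018high} to the random variable $X:=\sqrt n(R_{\sigma,\tau}-q^*)$ under this joint law. Rather than cite it as a black box, I would carry out the short computation so that the constant is visibly independent of $n$. Expanding the exponential and using Tonelli,
\[
\E\left[e^{\lambda X^2}\right]=\sum_{k\ge0}\frac{\lambda^k\E[X^{2k}]}{k!}\le 1+\sum_{k\ge1}\frac{(\lambda K_2^2\,2k)^k}{k!}\le 1+\sum_{k\ge1}\left(2e\lambda K_2^2\right)^k ,
\]
where the last step uses $k^k\le e^k k!$. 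Choosing, say, $\lambda_4:=1/(4eK_2(\beta)^2)$ makes the geometric tail equal to $\sum_{k\ge1}2^{-k}=1$, giving the bound $2$.

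The only point requiring care is that $K_2(\beta)$ does not depend on $n$. Tracing its definition, it is built from $C_{1,4},C_{1,5},C_{1,6},C_{2,1}$ at $\lambda_3(\beta)$, all of which depend only on $\beta$ (and on $t,q^*$, which are fixed). The moment bound holds for every $n$, because the exceptional event is absorbed via $\P(E_n^c)\le C_{1,4}e^{-C_{1,5}n}$ and the deterministic bound $n(R-q^*)^2\le 4n$. I expect no genuine obstacle here: the substantive work is in the uniqueness theorem and in the Laplace step. With this choice of $\lambda_4$, the theorem, and hence \pref{thm:main} with $\lambda=\lambda_4$, follows.
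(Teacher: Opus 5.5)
Your proposal is correct and follows the paper's proof. The paper likewise takes the $\beta<1$ sub-Gaussian moment bound $\|\sqrt n(R_{\sigma,\tau}-q^*)\|_{L^p}\le K_2(\beta)\sqrt p$ as input and converts it into the exponential-square bound with constant $2$ by citing \cite[Proposition 2.6.1]{vershynin2018high}. The only difference is that you unroll that proposition explicitly (series expansion plus $k^k\le e^k k!$), which gives the concrete choice $\lambda_4=1/(4eK_2(\beta)^2)$ in place of the paper's unspecified $1/K_4(\beta)^2$.
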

\begin{prf}
Since $\sqrt{n}(R_{\sigma, \tau}-q^*)$ is sub-Gaussian as $0<\beta<1$, by \cite[Proposition 2.6.1]{vershynin2018high}, there exists $K_4(\beta)>0$ such that
\[
\E_{\mathbf h^t}\left[\an{e^{n(R_{\sigma, \tau}-q^*)^2/K_4(\beta)^2}}_{\mathbf h^t}\right] \leq 2. 
\]
Taking $\lambda_4 = 1/K_4(\beta)^2$ yields the result. \qedhere
\end{prf}

\newpage
\addtocontents{toc}{\protect\setcounter{tocdepth}{-1}}

\section*{Acknowledgements}
\addtocontents{toc}{\protect\setcounter{tocdepth}{1}}

\iffocs{}{
JS and JSS thank Ewan Davies and Holden Lee for initial discussions. This project grew out of the potential Hessian ascent (PHA) series of papers, when JS and JSS were analyzing various analytic properties about the planted SK model under SL tilt \cite[\S 4]{davies2026potential}.}

\addtocontents{toc}{\protect\setcounter{tocdepth}{-1}}

\renewcommand{\baselinestretch}{0.90}\normalsize
{
    \small\hypersetup{urlcolor=Black}
    \bibliographystyle{alpha_beta_doi}
    \bibliography{doom.bib}
}
\renewcommand{\baselinestretch}{1.0}\normalsize

\addtocontents{toc}{\protect\setcounter{tocdepth}{1}}

\end{document}